\documentclass[a4paper,11pt]{amsart}

\usepackage[english]{babel}
\usepackage{amsmath,amssymb,amsthm, amscd}
\usepackage[alphabetic]{amsrefs}
\usepackage{enumitem}
\usepackage[unicode,pdfencoding=auto,psdextra]{hyperref}
\usepackage{xcolor}  %%  to highlight comments during the writing-up phase
\usepackage{mathrsfs} % for fancy curly letter

\calclayout

\usepackage{kantlipsum} % for text filler

\calclayout

\usepackage{etoolbox}
\apptocmd{\sloppy}{\hbadness 10000\relax}{}{}


\newtheorem{theorem}{Theorem}[section]

\newtheorem{lemma}[theorem]{Lemma}
\newtheorem{proposition}[theorem]{Proposition}
\theoremstyle{definition}
\newtheorem{definition}[theorem]{Definition}

\theoremstyle{remark}
\newtheorem{remark}[theorem]{Remark}

\newcommand{\CC}{\mathbb{C}} % the complex numbers
\newcommand{\RR}{\mathbb{R}} % the real numbers

\newcommand{\NN}{\mathbb{N}}
\newcommand{\camo}{\mathcal{C}_n}

\newcommand{\C}{\mathbb{C}}
\newcommand{\N}{\mathbb{N}}

\newcommand{\R}{\mathbb{R}}

\newcommand{\holo}{\mathcal{O}}

\newcommand{\ssubset}{\subset\joinrel\subset}

\DeclareMathOperator{\id}{id}

\DeclareMathOperator{\aut}{Aut}
\DeclareMathOperator{\vf}{VF}
\DeclareMathOperator{\ham}{Ham}
\DeclareMathOperator{\lie}{Lie}

\newcommand{\saut}{\mathrm{Aut}^\sigma}
\newcommand{\taut}{\mathrm{Aut}^\tau}

\DeclareMathOperator{\diff}{Diff}

\newcommand\restr[2]{{% we make the whole thing an ordinary symbol
  \left.\kern-\nulldelimiterspace % automatically resize the bar with \right
  #1 % the function
  \littletaller % pretend it's a little taller at normal size
  \right|_{#2} % this is the delimiter
  }}

\newcommand{\littletaller}{\mathchoice{\vphantom{\big|}}{}{}{}}

\title[Holomorphic Approximation for Diffeomorphisms]{Holomorphic Approximation for Real Diffeomorphism Groups}

\author[F. Deng]{Fusheng Deng}
\address{School of Mathematical Science \\ University of Chinese Academy of Sciences \\ Beijing, China}
\email{fshdeng@ucas.ac.cn}

\author[G. Huang]{Gaofeng Huang}
\address{Department of Mathematics \\ Harvard University \\  
Cambridge, USA}
\email{ghuang@math.harvard.edu}

\author[F. Kutzschebauch]{Frank Kutzschebauch}
\address{Mathematical Institute \\ University of Bern \\  
Bern, Switzerland}
\email{frank.kutzschebauch@unibe.ch}

\author[E. F. Wold]{Erlend Forn{\ae}ss Wold}
\address{Department of Mathematics \\  University of Oslo \\  
Oslo, Norway}
\email{erlendfw@math.uio.no}

\begin{document}

\subjclass{MSC2020: 32M17 (Primary),  32E30, 32M05, 32M25 (Secondary).}
\keywords{Anders\'en-Lempert theory, automorphism group, Carleman approximation, complexification, density property, diffeomorphism group.}

\begin{abstract}
    We show that every diffeomorphism of $\mathbb{R}^n$ for $n \ge 2$ can be approximated by an automorphism of $\mathbb{C}^n$ in the Whitney $C^k$-topology for any positive integer $k$ using the notion of density property. More precisely we find sufficient conditions for this holomorphic approximation of diffeomorphisms to hold and prove that the split real forms of most linear algebraic groups satisfy these conditions. In the same manner we also show holomorphic approximations of volume-preserving diffeomorphisms for the split real forms of linear algebraic groups equipped with the left-invariant volume form. 
\end{abstract}

\maketitle

\tableofcontents

\nocite{*}

\section{Introduction}
The complexification of real objects and the approximation of real functions by holomorphic functions constitute an important tool in mathematics. A celebrated theorem of the Swedish mathematician Torsten Carleman states that every continuous function on the real line $\R $ can be approximated in the Whitney topology by holomorphic functions on the complex line $\C $. For an overview of such results, particularly in the context of Several Complex Variables, we refer to the survey article by John Erik Fornæss, Franc Forstnerič, and the fourth author \cite{MR4264040}. In the community of Several Complex Variables, this type of approximation is commonly referred to as Carleman approximation.

The step from approximating functions to approximating diffeomorphims is delicate.  For example, approximation of  diffeomorphisms of the real line $\R$ by holomorphic automorphisms of $\C$ is clearly impossible. Diffeomorphism groups are always large whereas the holomorphic automorphism group of $\C$ consists of affine maps only. So a necessary input for holomorphic approximation of diffeomorphism groups are complex manifolds with large holomorphic automorphism groups. A theory about such complex manifolds, the so-called Anders\'en-Lempert theory, has been developed for about 35 years, see the overview article of Forstneri\v c and the third author \cite{MR4440754}.  

In the present paper we  systematically develop the application of the density property, volume density property, and symplectic density property to the approximation of diffeomorphisms, volume-preserving diffeomorphisms and symplectic diffeomorphisms.  The very first result in this direction was proved by Kutzschebauch and Wold in \cite{MR3794890}, where however the approximating holomorphic automorphism is not required to preserve the totally real submanifold $\R^s \subset \C^n$, $s<n$.  Some isolated results in this direction are:  approximation of Hamiltonian diffeomorphisms of $(S^1)^{2n}$, $(S^1)^n \times \R^n$ and $\R^{2n}$ (however in the much coarser compact-open topology and in the case $(S^1)^n \times \R^n \cong T^*(S^1)^n$ for another symplectic form than the canonical form on $T^*(S^1)^n$) by Pierre Berger and Dmitry Turaev \cite{MR4930686}. Remarkably this result was used by Berger \cite{MR5094439} to disprove a conjecture by Birkhoff \cite{MR0006260} about dynamical properties of real-analytic diffeomorphisms of a cylinder $S^1 \times \R$. Birkhoff predicted the absence of   a certain dynamical behavior  for real-analytic diffeomorphisms, however Berger was able to construct it using holomorphic approximation.  It is likely that the methods and results of the present paper can be used to construct real-analytic diffeomorphisms with a certain behavior on more complicated manifolds.

\bigskip 

In the simplest setting, our main results can be stated as follows. 

\begin{theorem} \label{thm: diffeo-Rn}
    Let $\phi$ be a diffeomorphism of $\R^n, n \ge 2$. Then for any positive continuous function $\epsilon$ on $\R^n$ and any natural number $k$, there exists a holomorphic automorphism $\Phi$ of $\C^n$ such that $\Phi(\R^n) = \R^n$ and 
    \[
        \| \Phi(x) - \phi(x) \|_{C^k} < \epsilon(x)  \quad \text{ for every } x \in \R^n. 
    \]
\end{theorem}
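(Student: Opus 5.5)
We plan to adapt the Andersén–Lempert method to the real setting. The key tool is the \emph{real} (or $\sigma$-equivariant) density property. Let $\sigma(z)=\bar z$ be the complex conjugation on $\C^n$, whose fixed point set is $\R^n$. Call a holomorphic vector field $V$ on $\C^n$ \emph{real} if $\sigma_* V = V$, i.e.\ $V$ has real coefficients, so that $V$ is tangent to $\R^n$. We will show that the Lie algebra generated by complete real vector fields is dense, in the compact-open topology, in the Lie algebra of all real holomorphic vector fields. The natural generators are the shear fields $p(\pi_v(z))\, w$ and the overshear fields $q(\pi_v(z))\langle z, w\rangle\, w$ with $v\perp w$ real, where $p,q$ are real polynomials. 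Their flows are real shears and overshears, and these are automorphisms preserving $\R^n$. For $n\ge 2$ the classical Andersén–Lempert computation expresses every polynomial vector field as a linear combination of such fields, and it can be carried out over $\R$, so every real polynomial field lies in this Lie algebra. This gives the real density property.

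With density in hand, we follow a Carleman-type inductive scheme. First we reduce to diffeomorphisms isotopic to the identity. Writing $\phi = A\circ\psi$ with $A\in GL_n(\R)$ (itself a real automorphism) and $\psi$ isotopic to the identity, it suffices to treat $\phi=\phi_1$, the time-one map of an isotopy $\phi_t$ generated by a time-dependent smooth vector field $X_t$ on $\R^n$. Next, fix an exhaustion of $\R^n$ by balls $B_j$. Weierstrass approximation (polynomials in $x$, with real coefficients) approximates $X_t$ on compacts in $C^k$ by real polynomial fields. The real density property combined with the Andersén–Lempert theorem then approximates $\phi_t$ on a compact neighbourhood (in $\C^n$) of $\phi_s(B_j)$ by compositions of flows of complete real fields. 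The resulting automorphisms commute with $\sigma$, so they preserve $\R^n$. Cauchy estimates on a complex neighbourhood then convert $C^0$ closeness on that neighbourhood into $C^k$ closeness on $B_j$.

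The main obstacle is passing from compact approximation to Whitney (fine) $C^k$ approximation on all of $\R^n$. We must build $\Phi=\lim \Phi_j$ with $\Phi_{j+1} = \Theta_{j+1}\circ\Phi_j$, or an interpolated version, where $\Theta_{j+1}$ is close to the identity on a large complex neighbourhood $K_j$ of $\Phi_j(\overline{B_j})$. At the same time, $\Theta_{j+1}$ must correct $\Phi_j$ toward $\phi$ on the shell $\overline{B_{j+1}}\setminus B_j$ within $\epsilon$. For this we need a real Andersén–Lempert theorem with \emph{mixed} approximation: $\Theta$ should be close to the identity on a polynomially convex, $\sigma$-invariant compact set $K$, and close to a prescribed diffeomorphism on a piece of $\R^n$ outside $K$. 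We would first prove that $K\cup(\text{totally real compact piece of }\R^n)$ is polynomially convex. This follows since $K$ can be chosen as a closed ball-like set $\{|z|\le r\}$ intersected suitably, and compacts in $\R^n$ are polynomially convex, by a Kallin-type lemma. We then apply the real density property on an isotopy fixing $K$ up to small error: we extend the isotopy, equal to the identity near $K$ and moving only the shell, to a neighbourhood of this union. This uses that the isotopy of $\phi_j^{-1}\circ\phi$ can be chosen to be the identity near $\overline{B_j}$ after a small adjustment, which requires $n\ge2$ only through density. Finally we choose the errors $\delta_j$ summably small, and also small enough that the $C^k$ errors on $B_j$ are controlled by $\min_{B_{j+1}}\epsilon$. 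Then the standard Forstnerič–Rosay convergence argument gives a limit automorphism $\Phi$ of $\C^n$, commuting with $\sigma$, hence with $\Phi(\R^n)=\R^n$, satisfying the estimate. Ensuring that the limit is surjective, via control of the inverses on the sets $K_j$, is part of the same bookkeeping.
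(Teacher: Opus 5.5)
Your ingredients match the paper's: $\sigma$-density of $\C^n$ via real shears and overshears, polynomial convexity of a complex ball union a real ball, and a Forstneri\v{c}-type limit. Your Kallin-type argument for the polynomial convexity does work: with $p=\sum_j z_j^2$, the images of the complex ball of radius $r$ and of the real shell $r\le |x|\le R$ meet only at $r^2$, and the fibre over $r^2$ in the union is the real sphere. The gap is in the inductive scheme, which is where the paper's work lies.

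For the limit to be an automorphism of $\C^n$, the compacts $K_j$ on which $\Theta_{j+1}\approx\id$ must exhaust $\C^n$ and contain $\Phi_j$ of an exhausting family of complex sets. A thin neighbourhood of $\Phi_j(\overline{B_j})$ only gives convergence near $\R^n$. To keep $K_j\cup M$ polynomially convex you take the $K_j$ to be complex balls. But then the real trace $K_j\cap\R^n$ is in general much larger than $\Phi_j(\overline{B_j})$. Nothing in your induction makes $\Phi_j$ close to $\phi$ on $\Phi_j^{-1}(K_j\cap\R^n)\setminus\overline{B_j}$. This causes two failures:
\begin{itemize}
\item On the part of the shell that $\Phi_j$ maps into $K_j$, your two requirements on $\Theta_{j+1}$ (close to $\id$ and close to $\phi\circ\Phi_j^{-1}$) contradict each other.
\item Points outside $B_{j+1}$ that are mapped into $K_j$ are frozen by all later steps, so the limit stays near $\Phi_j$ there instead of near $\phi$.
\end{itemize}
Enlarging $B_{j+1}$ does not help, because $K_j$ is itself determined by $\Phi_j$. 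What you need is the invariant ``$\Phi_j\approx\phi$ on $\Phi_j^{-1}(K_j\cap\R^n)$'', and a one-step scheme cannot produce it without circularity. For the same reason, the correction isotopy must be the identity near $K_j\cap\R^n$ in the target, not near $\overline{B_j}$. Also, $\phi_j^{-1}\circ\phi$ puts the correction on the wrong side of $\Theta_{j+1}\circ\Phi_j$.

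The paper breaks the circularity with two devices. First, it carries an auxiliary smooth isotopy $\psi_{j,t}$ of $\R^n$ with $\psi_{j,t}=\id$ on $Z^{\R}_{s_j}$, where $\Phi_j(Z_{r_j})\subset Z_{s_j}$, and with $\psi_{j,1}\circ\Phi_j$ within $\epsilon$ of $\varphi$ on \emph{all} of $\R^n$. This forces $\Phi_j\approx\varphi$ on $\Phi_j^{-1}(Z^{\R}_{s_j})$. Second, it splits each step in two:
\begin{enumerate}
\item $\Psi_{j,t}$ is chosen with $\Psi_{j,t}\approx\id$ on $Z_{s_j}$ and $\Psi_{j,t}\approx\psi_{j,t}$ on $Z^{\R}_{a+2}$, where $\Phi_j(Z_{s_j+1})\subset Z_a$.
\item Only then is $s_{j+1}$ fixed, by $\Psi_{j,1}(Z_a)\subset Z_{s_{j+1}}$.
\item $\Xi_{j,t}$ is chosen with $\Xi_{j,t}\approx\id$ on $Z_a$ and $\Xi_{j,t}\approx\tilde\xi_{j,t}$ on a real ball $Z^{\R}_d$. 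This ball is chosen afterwards, so large that $\Phi_{j+1}=\Psi_{j,1}\circ\Xi_{j,1}\circ\Phi_j\approx\varphi$ on $\Phi_{j+1}^{-1}(Z^{\R}_{s_{j+1}+3})$.
\item $\psi_{j+1,t}$ is the flow of the generator of $\psi_{j,t}\circ(\Psi_{j,t}\circ\Xi_{j,t})^{-1}$, cut off to vanish on $Z^{\R}_{s_{j+1}+1}$.
\end{enumerate}
These cutoffs are harmless because whole isotopies are approximated for every $t$, so the isotopies being cut off stay close to the identity on the transition regions.

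Two smaller points. The mixed step needs a Mergelyan-type theorem on $K\cup M$ for fields holomorphic near $K$ and $C^k$ on $M$, not Weierstrass approximation; the paper uses the parametric version from the Forn\ae ss--Forstneri\v{c}--Wold survey. The reduction to the isotopic-to-identity case needs the isotopy $t\psi(0)+t^{-1}(\psi(tx)-\psi(0))$, which you only assert.
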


\begin{remark}
    Every diffeomorphism $\phi$ of $\R^n$ is either smoothly isotopic to the identity or to $(x_1, \dots, x_n) \mapsto (-x_1, x_2, \dots, x_n)$. Thus the preceding theorem is a direct consequence of Theorem \ref{Carleman} below. 
    Indeed, consider the following isotopy of diffeomorphisms
    \begin{align} \label{Isotopy-Id}
        \phi_t(x) = \begin{cases}
            t \phi(0) + \frac{\phi(tx)-\phi(0)}{t}, & t \in (0,1] \\
            d_0 \phi(x), & t = 0. 
        \end{cases} 
    \end{align}
    By Hadamard's lemma we have $\phi(tx) - \phi(0) = t \sum_i x_i g_i(tx)$ for smooth maps $g_i$. Thus $\phi_1=\phi$ is smoothly joint to $\phi_0 = d_0 \phi \in \mathrm{GL}_n(\R)$. Since $\mathrm{GL}_n(\R)$ has two connected components, we can  connect further either to the identity or $(x_1, \dots, x_n) \mapsto (-x_1, x_2, \dots, x_n)$.   We concatenate these isotopies smoothly by a standard endpoint-stationary reparametrization (i.e.\@ varying the time dependent vector fields to zero (in space) from both sides of the transition time). 
\end{remark}

Let $\omega = d x_1 \wedge \cdots \wedge d x_n$ be the standard volume form on $\R^n$ and $\Omega = d z_1 \wedge  \cdots \wedge d z_n$ the standard holomorphic volume form on $\C^n$. By the same isotopy \eqref{Isotopy-Id} every volume-preserving diffeomorphism of $\R^n$ can be smoothly joint to an element of $\mathrm{SL}_n(\R)$ which is connected, and thus to the identity.  Our second main result in this simple setting is the following. 
\begin{theorem} \label{thm: Vdiffeo-Rn}
    Let $\phi$ be a diffeomorphism of $\R^n, n \ge 2$, and $\phi^* \omega = \omega$. Then for any positive continuous function $\epsilon$ on $\R^n$ and any natural number $k$, there exists a holomorphic automorphism $\Phi$ of $\C^n$ such that $\Phi^* \Omega = \Omega$, $\Phi(\R^n) = \R^n$  and 
    \[
        \| \Phi(x) - \phi(x) \|_{C^k} < \epsilon(x)  \quad \text{ for every } x \in \R^n. 
    \]
\end{theorem}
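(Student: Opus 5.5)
The plan mirrors the remark following Theorem~\ref{thm: diffeo-Rn}: reduce to an isotopy statement, then approximate the time-dependent vector field by real Lie combinations of complete volume-preserving polynomial fields. Theorem~\ref{Carleman} is stated only later, so the main step below is essentially its volume-preserving version in the case of $\R^n \subset \C^n$.

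\emph{Reduction to an isotopy from the identity.} The isotopy \eqref{Isotopy-Id} consists of volume-preserving maps. Indeed $\phi_t = \tau_t\circ\delta_t^{-1}\circ\phi\circ\delta_t$, where $\delta_t$ is the dilation by $t$ and $\tau_t$ is a translation, and each factor except the dilations preserves $\omega$, which in turn scales by $t^{\pm n}$ and cancels. The endpoint is $\phi_0=d_0\phi\in \mathrm{SL}_n(\R)$. Since $\mathrm{SL}_n(\R)$ is connected, we join it to the identity through $\mathrm{SL}_n(\R)$ and reparametrize to get a smooth isotopy $\phi_t$, $t\in[0,1]$, of $\omega$-preserving diffeomorphisms with $\phi_0=\id$ and $\phi_1=\phi$. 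It is generated by a time-dependent divergence-free vector field $X_t$ on $\R^n$.

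\emph{Approximating the vector field.} We approximate $X_t$ by restrictions of real holomorphic vector fields of the form $\sum_j c_j(t)\,V_j$. Here the $V_j$ are complete, $\Omega$-divergence-free fields with real coefficients, such as shears $f(z_2,\dots,z_n)\partial_{z_1}$ with $f$ a real polynomial, together with their images under real linear changes of coordinates. Their flows are automorphisms preserving $\Omega$ and $\R^n$. The algebraic input is that the real Lie algebra generated by such fields is dense in polynomial divergence-free fields with real coefficients, which is the real form of the volume density property of $\C^n$ (Varolin / Anders\'en--Lempert, $n\ge2$). Combining this with Carleman-type approximation of divergence-free fields on $\R^n$, we approximate $X_t$ by polynomial divergence-free fields with real coefficients, keeping divergence zero via a $(n-2)$-form potential with $X=\ast d\beta$. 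The flows are then replaced by finite compositions of flows of the complete fields $V_j$, using the standard Lie bracket and sum approximation of flows. Every map produced this way is an automorphism preserving $\Omega$ and mapping $\R^n$ to itself.

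\emph{Main obstacle: from compact approximation to Whitney approximation.} The steps above give $C^k$-approximation only on compacts. To get the fine Whitney $C^k$ estimate with $\epsilon(x)$, I would use an exhaustion $K_1\subset K_2\subset\cdots$ of $\R^n$ by balls and build $\Phi=\lim \Psi_m\circ\cdots\circ\Psi_1$ inductively. At stage $m$ we correct $\phi$ on the shell $K_{m+1}\setminus K_m$ while keeping $\Psi_m$ very close to the identity on a large compact of $\C^n$ containing the previous images. For this I would exploit the fact that the isotopy can be chosen stationary on $K_m$ after a cutoff of the divergence-free field through its potential. The difficulty is approximating a field that is zero near $K_m$ and arbitrary further out on $\R^n$, while staying small on a complex neighbourhood of $K_m$. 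This requires an Anders\'en--Lempert step on a polynomially convex set of the form $L\cup(\R^n\cap\overline{B}_R)$ with $L$ a compact ball in $\C^n$. Such a union is polynomially convex, and the field is holomorphic near $L$ and divergence-free, so the Forstneri\v c--Rosay / Varolin approximation applies, using $H^{n-1}=0$ of the set to keep the divergence condition. The errors are summed with $\sum\delta_m$ chosen so that the limit exists uniformly on compacts of $\C^n$ and is an automorphism preserving $\Omega$. Moreover, $\R^n$ is invariant under each $\Psi_m$, and surjectivity follows from the standard push-out lemma.
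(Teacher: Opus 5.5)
Your reduction (the isotopy \eqref{Isotopy-Id} lands in $\mathrm{SL}_n(\R)$) agrees with the paper. So does your local step: real shears giving the $\sigma$-volume density property of $\C^n$, Mergelyan approximation of $(n-2)$-form potentials on the Chirka--Smirnov set $L\cup(\R^n\cap\overline B_R)$, and flows replaced by compositions of complete flows. This is Theorem~\ref{theorem: LocalCarleman-vol} for $\C^n$.

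\textbf{The gap is the globalization.} This is the real content of Theorem~\ref{Global-Carleman-vol}, and a one-automorphism-per-step induction as you describe does not close. Three requirements pull against each other.
\begin{itemize}
\item For the push-out lemma to give an automorphism, and not a volume-preserving Fatou--Bieberbach map onto a proper domain, the ball $L_m$ on which $\Psi_m\approx\id$ must contain $\Phi_m(B_{\rho_m})$ for complex balls with $\rho_m\to\infty$.
\item For the approximation problem on $L_m\cup(\R^n\cap\overline B_R)$ to make sense, the real target must be exactly $\id$ on $L_m\cap\R^n$. So $\Phi_m$ must already approximate $\phi$ on $\Phi_m^{-1}(L_m\cap\R^n)$. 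This set is in general much larger than $K_m$, so correcting only on the shell $K_{m+1}\setminus K_m$ is not enough.
\item The radius of $L_m$ is dictated by $\Phi_m(B_{\rho_m})=\Psi_{m-1}(\Phi_{m-1}(B_{\rho_m}))$. This depends on $\Psi_{m-1}$ on $\Phi_{m-1}(B_{\rho_m})\setminus L_{m-1}$, where it is uncontrolled.
\end{itemize}
Hence the radius of $L_m$ is unknown at step $m-1$, when you must choose the real set on which $\Psi_{m-1}$ corrects. Enlarging $L_{m-1}$ to cover this only pushes the same problem back one step, indefinitely.

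\textbf{How the paper closes it.} It carries a global real correcting isotopy $\psi_{j,t}$ such that $\psi_{j,1}\circ\Phi_j\approx\varphi$ on all of $X_\R$, $\psi_{j,t}=\id$ on $Z^\R_{s_j}$, and $\Phi_j(Z_{r_j})\subset Z_{s_j}$. It then splits each step into two automorphisms, constructed in this order.
\begin{enumerate}
\item First, $\Psi_{j,t}\approx\id$ on $Z_{s_j}$ and $\Psi_{j,t}\approx\psi_{j,t}$ on $Z^\R_{a+2}$, where $\Phi_j(Z_{s_j+1})\subset Z_a$. Only now is $s_{j+1}$ fixed, via $\Psi_{j,1}(Z_a)\subset Z_{s_{j+1}}$.
\item The residual $\Psi_{j,t}^{-1}\circ\psi_{j,t}$ is near $\id$ on $Z^\R_{a+2}$. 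It is cut off to be exactly $\id$ on $Z^\R_a$.
\item Then $\Xi_{j,t}$ approximates $\id$ on the larger ball $Z_a$, and this cut-off residual on a real ball $Z^\R_d$ chosen after $s_{j+1}$ is known.
\end{enumerate}
With $\Phi_{j+1}=\Psi_{j,1}\circ\Xi_{j,1}\circ\Phi_j$ and $r_{j+1}=s_j+1$, the radii grow and all three requirements hold.

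\textbf{A second missing point.} Your cutoff ``through the potential'' preserves the approximation only if the $(n-2)$-form primitive is itself small where the cutoff varies, since $d(\chi\beta)=\chi\, d\beta+d\chi\wedge\beta$. A primitive of a small closed form need not be small, so you need one chosen with $C^k$ estimates, as in Lemma~\ref{n-2-form}(ii).
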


Our third main result for the symplectic case in the simple setting of $\R^{2n}$ was proved by the first and the fourth named authors in \cite{MR4423269}. 
\medskip 

Before giving our more general results we introduce the relevant definitions. The notion of real holomorphic objects is natural since we aim to build holomorphic automorphisms preserving a real submanifold.

\begin{definition} \label{def: realHolo}
    Let $X$ be a complex manifold with an antiholomorphic involution $\sigma \colon X \to X$.  
    A holomorphic vector field $V$ on $X$ is called \textbf{real holomorphic with respect to}  $\sigma$, 
    if $\sigma_* (V) = \overline{V}$. 
    A holomorphic function $f \in \holo(X)$ is called real holomorphic with respect to $\sigma$,  if $\sigma^\ast f= \overline{f}$ and likewise, a holomorphic differential form $\alpha$ is real holomorphic with respect to $\sigma$, if $\sigma^\ast \alpha = \overline{\alpha}$. 
\end{definition}

\begin{definition}\label{definition: realform}
(1) Let $X_\R$ be a smooth manifold of dimension $n \ge 1$. A \textbf{complexification} $(X, \sigma, \jmath)$ of $X_\R$ is a complex manifold $X$ of complex dimension $n$, together with an antiholomorphic involution $\sigma \colon X \to X$ and a smooth map $\jmath \colon X_\R \to X$, such that $\jmath$ is a proper embedding and the image of $X_\R$ is a union of connected components of the fixed-point set of $\sigma$. 

(2)
Let $X_\R$ be a smooth manifold of dimension $n \ge 1$ with a real volume form $\Omega_\R$. A \textbf{volume complexification} $(X, \Omega, \sigma, \jmath)$ of $(X_\R, \Omega_\R)$ is a complexification $(X, \sigma, \jmath)$ of $X_\R$, together with a holomorphic volume form $\Omega$ satisfying
\begin{enumerate}[label=(\roman*)]
	%\item		$(X, \sigma)$ is a complexification of $X_\R$.
	\item		The pullback $\jmath^\ast \Omega$ coincides with $\Omega_\R$. 	
	\item		The volume form $\Omega$ is real holomorphic.
\end{enumerate}

(3)
Let $X_\R$ be a smooth manifold of dimension $2n, \, n \ge 1$, with a real symplectic form $\omega_\R$. A \textbf{symplectic complexification} $(X, \omega, \sigma)$ of $(X_\R, \omega_\R)$ is a complexification $(X, \sigma, \jmath)$ of $X_\R$, together with a holomorphic symplectic form $\omega$ satisfying
\begin{enumerate}[label=(\roman*)]
	\item		The pullback $\jmath^\ast \omega$ coincides with $\omega_\R$. 	
	\item		The symplectic form $\omega$ is real holomorphic.
\end{enumerate}
\end{definition}

We use the following form of density property introduced by Dror Varolin \cite{MR1829353}. 
\begin{definition} \label{def: sigmaDPs}
    Let $X$ be a complex manifold with an antiholomorphic involution $\sigma \colon X \to X$. Let $\mathfrak{g}$ be a Lie subalgebra of all holomorphic vector fields on $X$. We say that $\mathfrak{g}$ has the $\sigma$-\textbf{density property} if the complex Lie algebra generated by $\C$-complete, real holomorphic vector fields in $\mathfrak{g}$ is dense in $\mathfrak{g}$ in the compact-open topology. 
\end{definition}

    We consider the following Lie algebras of holomorphic vector fields in the three settings (1), (2) and (3) of Definition \ref{definition: realform}
    \begin{align*}
        \vf (X) &= \{ \text{holomorphic vector fields on } X \}, \\
        \vf_{\Omega}(X) &= \{ \text{holomorphic volume-preserving vector fields on } X \}, \\
        \vf_{\omega}(X) &= \{ \text{holomorphic symplectic vector fields on } X \}, \\
        \vf_{\rm ham}(X) &= \{ \text{holomorphic Hamiltonian vector fields on } X \}.
    \end{align*}
    We will use the notions $\sigma$-density property, $\sigma$-volume density property, $\sigma$-symplectic density property and $\sigma$-Hamiltonian density property, if the $\sigma$-density property holds for the Lie algebras $\vf(X), \vf_{\Omega}(X), \vf_{\omega}(X)$ and $\vf_{\rm ham}(X)$. 

\begin{remark} \label{replace-Lie-by-Sum}
    (1) If the Lie subalgebra $\mathfrak{g}$ is closed in the compact-open topology, then the $\sigma$-density property of $\mathfrak{g}$ is equivalent to the fact that the complex vector space generated by complete, real holomorphic vector fields in $\mathfrak{g}$ is dense in $\mathfrak{g}$ in the compact-open topology. 
    Indeed $[V, W] = \lim_{t \to 0} (\varphi_t^* W - W)/t$ uniformly on compact subsets of $X$, where $\varphi_t$ denotes the time-$t$ flow of $V$. Note that the pullback $\varphi_t^* W$ is in $\mathfrak{g}$ by closedness. 

    (2) When $\mathfrak{g}=\vf_{\rm ham}(X)$, the holomorphic functions modulo locally constant functions $\holo(X)/\CC$ with the Poisson bracket is (anti-)isomorphic to the Lie algebra $\vf_{\rm ham}(X)$ of Hamiltonian vector fields on $X$. Thus the $\sigma$-density property of $\vf_{\rm ham}(X)$ is equivalent to the fact that the complex vector space generated by real holomorphic Hamiltonian functions inducing complete Hamiltonian vector fields is dense in $\holo(X)/\CC$ in the compact-open topology.
\end{remark}

    Let $\alpha$ be a real holomorphic differential form on $(X, \sigma)$. 
    We denote by $\saut_{\alpha} (X)$ the group of $\sigma$-equivariant, $\alpha$-preserving holomorphic automorphisms of $X$:
    \begin{align*}
        \saut_{\alpha} (X) = \{\, \Phi \in \aut(X) : \Phi \circ \sigma = \sigma \circ \Phi, \, \Phi^* \alpha = \alpha \,\}.
    \end{align*}
    %By $\sigma$-equivariance, these automorphisms preserve the fixed-point set of $\sigma$. 
    Since $\alpha$ is real holomorphic with respect to $\sigma$, 
    \[
        \saut_\alpha (X) = \{\, \Phi \in \aut(X) : \Phi (\mathrm{Fix}(\sigma)) = \mathrm{Fix}(\sigma), \, \Phi^* \alpha = \alpha \,\}.
    \]

\begin{definition}
    Let $X_\R$ be a smooth manifold and $G_\R$  a subgroup of $\diff(X_\R)$. We say that $G_\R$ is \textbf{holomorphically approximable}, if there exist a complexification $(X, \sigma)$ of $X_\R$ and a subgroup $G$ of $\saut (X)$, such that $G \rvert_{X_\R} \subset G_\R$ and $G \rvert_{X_\R}$ is dense in $G_\R$ in the Whitney topology. 
\end{definition}

    We consider the following diffeomorphism groups in the three settings (1), (2) and (3) of Definition \ref{definition: realform}
    \begin{align*}
        \diff (X_\R) &= \{ \text{diffeomorphisms of } X_\R \}, \\
        \diff_{\Omega_\R}(X_\R) &= \{ \text{volume-preserving diffeomorphisms of } X_\R \}, \\
        \diff_{\omega_\R}(X_\R) &= \{ \text{symplectic diffeomorphisms of } X_\R \}, \\
        \ham_{\omega_\R}(X_\R) &= \{ \text{Hamiltonian diffeomorphisms of } X_\R \}.
    \end{align*}
    A Hamiltonian diffeomorphism is a symplectic diffeomorphism as the time-1 flow of a time-dependent Hamiltonian vector field, see Section \ref{sec: Ham-approximation}.
    \medskip

Here come our general results. They are proved in Section \ref{general-results} as Theorem \ref{Global-Carleman}, Theorem \ref{Global-Carleman-vol} and Theorem \ref{Global-Carleman-ham}.
\begin{theorem} \label{Carleman}
    If a smooth manifold $X_\R$ has a Stein complexification $(X, \sigma)$ with the $\sigma$-density property, then the subgroup of the diffeomorphism group $\diff(X_\R)$ consisting of elements smoothly isotopic to the identity is holomorphically approximable. 
\end{theorem}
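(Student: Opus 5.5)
The plan is to approximate a smooth isotopy $\phi_t$, $t\in[0,1]$, $\phi_0=\id$, $\phi_1=\phi$, from the identity to the given diffeomorphism $\phi$ of $X_\R$ in the Whitney $C^k$ topology by elements of $\saut(X)$. I would take $G\subset\saut(X)$ to be the group generated by flows of $\C$-complete real holomorphic vector fields, and argue in two stages: first a local (compact) version of Anders\'en--Lempert theory adapted to $\sigma$, then a global Carleman-type exhaustion argument.

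\emph{Local step.} Write $\phi_t$ as the flow of a time-dependent smooth vector field $v_t$ on $X_\R$. Fix a compact set $K\subset X_\R$ and an $\mathcal O(X)$-convex compact neighbourhood $L$ of $\bigcup_t \phi_t(K)$. Since $X_\R$ is a totally real, $\sigma$-fixed submanifold of the Stein manifold $X$, real Carleman/Whitney approximation for functions lets us approximate $v_t$ on a neighbourhood of $\bigcup_t\phi_t(K)$ in $X_\R$, uniformly in $t$ and in $C^k$, by holomorphic vector fields $V_t$ on $X$. Averaging $\tfrac12(V_t+\overline{\sigma_*V_t})$ makes them real holomorphic, and their restriction to $X_\R$ is then tangent to $X_\R$. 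Next, discretize in time and use the $\sigma$-density property (via Remark~\ref{replace-Lie-by-Sum}(1)) to approximate each $V_{t_j}$ on $L$ by a finite sum $\sum_i c_i W_i$ of complete real holomorphic fields. Taking real parts of the coefficients keeps the field real, since $\overline{\sigma_* W}=W$ and the condition is $\R$-linear. Finally apply the standard Euler/Lie product formula: compositions of the flows $\varphi^{W_i}_{c_i/N}$ approximate the flow of $\sum c_iW_i$. Each such flow commutes with $\sigma$, so it lies in $\saut(X)$ and preserves $\mathrm{Fix}(\sigma)$. Because $X_\R$ is a union of connected components of $\mathrm{Fix}(\sigma)$ and the maps are isotopic to the identity through such maps, they preserve $X_\R$ itself. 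Cauchy estimates upgrade uniform closeness on neighbourhoods to $C^k$ closeness on $K$.

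\emph{Global step.} Exhaust $X_\R$ by compacts $K_1\subset K_2\subset\cdots$ chosen together with a holomorphically convex exhaustion of $X$, and build $\Phi=\lim \Phi_m\circ\cdots\circ\Phi_1$ inductively. The correction $\Phi_{m+1}$ must be close to the identity on a large compact (to preserve convergence and the previous approximation), and also close to the required diffeomorphism of $X_\R$ on the next shell. The natural tool is an Anders\'en--Lempert theorem on a union $L\cup M$, where $L$ is holomorphically convex in $X$ and $M\subset X_\R$ is a totally real compact piece. Here $L\cup M$ is again $\mathcal O(X)$-convex, since totally real compacts attached to convex sets remain polynomially/holomorphically convex under the standard hypotheses. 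We need an isotopy that is the identity near $L$ and equals the desired correction on $M$. This is arranged by first cutting off the vector field $v_t$ near the already-approximated region, using the isotopy extension built from $\phi\circ\Phi_{\le m}^{-1}$ restricted to $X_\R$, which is $C^k$-close to the identity on $K_m$.

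The main obstacle I expect is the global step. Two things must be controlled at once. First, the errors must be summable and compatible with a Whitney ($\epsilon(x)$-dependent) bound. Second, the inductive composite must converge to an honest automorphism of $X$ rather than merely a map on a Fatou--Bieberbach-type domain. For the latter I would use the standard device of choosing the exhaustion so that each $\Phi_m$ is within $\delta_m$ of the identity on $L_m$, with $\sum\delta_m$ small and $\Phi_{\le m}(L_m)\supset L_{m-1}$, guaranteeing surjectivity of the limit (cf.\ Forstneri\v c--Rosay). Equivariance passes to the limit automatically, so $\Phi\in\saut(X)$ with $\Phi(X_\R)=X_\R$.
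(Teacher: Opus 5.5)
\textbf{There is a genuine gap: the global step is left open, and the devices you propose for it are unjustified or wrong.}

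Your local step is sound and is essentially the paper's Remark \ref{Local-Carleman-w/o-interpolation}: Mergelyan approximation on a compact of $X_\R$, symmetrization $\tfrac12(V+\overline{\sigma_*V})$, the $\sigma$-density property with real coefficients, a product formula for real-time flows, and Cauchy estimates. Moving the interpolation out of the local theorem into the global step is only a reorganization. The difficulty is that the global step is exactly where the content of Theorem \ref{Global-Carleman} lies.

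\emph{Holomorphic convexity of $L\cup M$.} This is not a general fact. In $\C$ the closed half-annulus $\{1\le |z|\le 1.1,\ \mathrm{Im}\, z\ge 0\}$ is polynomially convex, yet together with $[-1.1,1.1]\subset\R$ it encloses a half-disc. The paper gets convexity only for special sets. It takes a real embedding $X\subset\C^q$ with $X_\R=X\cap\R^q$ and $\rho=\|p-p_0\|^2$, $p_0\in\R^q$, and proves that $Z_a^\R\cup Z_b$ is convex via the Chirka--Smirnov theorem (Lemma \ref{Saturn-holoconv}). The whole induction is then run with these sublevel sets.

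\emph{The cut-off isotopy.} To apply an approximation on $L\cup M$, you need a real isotopy that is exactly the identity near $L\cap X_\R$ for all times and ends $\epsilon(x)$-close to the required correction. Knowing that the endpoint $\phi\circ\Phi_{\le m}^{-1}|_{X_\R}$ is $C^k$-close to $\id$ on $K_m$ does not give you one. Cutting off the generator of an arbitrary isotopy to it gives no control in the transition zone. The paper's key device is to approximate isotopies by isotopies. The auxiliary parameter $s$ and polynomial dependence on $(s,t)$ in Theorem \ref{theorem: LocalCarleman} produce a smooth family $A_t\in\saut(X)$ close to $\varphi_t$ for every $t$. Your time-discretized product formula only approximates the time-one map.

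\emph{Whitney control.} The paper also carries along a real isotopy $\psi_{j,t}$, equal to $\id$ on $Z^\R_{s_j}$, with the invariant $(4_j)$: $\psi_{j,1}\circ\Phi_j$ is $\epsilon(p)$-close to $\varphi$ on all of $X_\R$. This invariant resolves the $\epsilon(x)$-control obstacle you leave open. At each step $\psi_{j,t}$ is replaced by the flow of a cutoff of the generator of $\psi_{j,t}\circ(\Psi_{j,t}\circ\Xi_{j,t})^{-1}$.

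\emph{Convergence.} Your criterion here is off. Suppose $\Phi_{m+1}$ is close to $\id$ on $L_{m+1}$ with suitably summable errors. Then the convergence lemma used at the end of the proof of Theorem \ref{Global-Carleman} (Forstneri\v{c}'s Proposition 5.1) already shows that $\lim\Phi_{\le m}$ is a biholomorphism of $D=\bigcup_m\Phi_{\le m}^{-1}(L_m)$ \emph{onto} $X$. So surjectivity is not the issue; the danger is $D\subsetneq X$, a Fatou--Bieberbach-type domain. Your condition $\Phi_{\le m}(L_m)\supset L_{m-1}$ does not exclude this. What is needed is a forward condition: $\Phi_{\le m}$ must map an exhausting sequence into the sets on which all later corrections are near $\id$. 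In the paper this is $(1_j)$, $\Phi_j(Z_{r_j})\subset Z_{s_j}$, together with corrections close to $\id$ on $Z_{s_j}$ and $r_{j+1}=s_j+1$.

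\emph{The group $G$.} Taking $G$ to be the group generated by real-time flows of complete real holomorphic fields does not work, because the limit $\Phi$ is an infinite composition and need not lie in it. Take instead $G=\{\Phi\in\saut(X):\Phi(X_\R)=X_\R,\ \Phi|_{X_\R}\text{ isotopic to }\id\}$. An approximant that is close enough to $\varphi$ in the Whitney $C^1$ sense lies in this $G$, since $X_\R$ is open and closed in $\mathrm{Fix}(\sigma)$ and sufficiently $C^1$-close diffeomorphisms are isotopic.
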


\begin{theorem} \label{Carleman-vol}
    If a smooth manifold $X_\R$ with a real volume form $\Omega_\R$ has a Stein volume complexification $(X, \Omega, \sigma)$ with the $\sigma$-volume density property, then the subgroup of the volume-preserving diffeomorphism group $\diff_{\Omega_\R}(X_\R)$ consisting of elements smoothly isotopic to the identity is holomorphically approximable.
\end{theorem}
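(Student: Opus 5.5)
The plan is to run the Andersén–Lempert scheme in a $\sigma$-equivariant, volume-preserving form, and then pass from compact sets to the Whitney topology by an exhaustion argument with shrinking tolerances. Let $\phi \in \diff_{\Omega_\R}(X_\R)$ be smoothly isotopic to the identity.

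First, upgrade the isotopy to a volume-preserving one $\phi_t$ with $\phi_0=\id$ and $\phi_1=\phi$. This uses Moser's trick: given a smooth isotopy $\psi_t$, the forms $\psi_t^*\Omega_\R$ and $\Omega_\R$ have the same total mass on relevant compacts. One corrects $\psi_t$ by flows solving $d(\iota_{Y_t}\Omega_\R)= -\tfrac{d}{dt}(\ldots)$. For a noncompact $X_\R$ I would instead handle this locally in time and space. The result is a time-dependent divergence-free vector field $v_t$ on $X_\R$ generating $\phi_t$. Equivalently, the $(n-1)$-forms $\iota_{v_t}\Omega_\R$ are closed, and they are exact whenever $\phi$ is Hamiltonian-like. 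In general the cohomology class must be matched, which is where the volume density property has to cover all of $\vf_\Omega(X)$, including fields whose contraction is closed but not exact.

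Second, fix a compact $K\subset X_\R$ and divide $[0,1]$ into small steps. On a Stein neighborhood of the compact track $\bigcup_t\phi_t(K)$, approximate $v_t$ in $C^k$ by real holomorphic volume-preserving fields $V_t$ on $X$. One way is to approximate the closed form $\iota_{v_t}\Omega_\R$ by $\sigma$-real holomorphic closed $(n-1)$-forms $\beta_t$ on $X$, using Carleman/Oka–Weil approximation on the totally real $X_\R$ inside the Stein manifold $X$. The real class can be adjusted because $X$ retracts onto a neighborhood of $X_\R$ in the relevant sense, or at least the class is representable, since $X$ is Stein. Then set $V_t$ to be the field with $\iota_{V_t}\Omega=\beta_t$; real holomorphicity is obtained by averaging $\tfrac12(\beta+\overline{\sigma^*\beta})$. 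By the $\sigma$-volume density property and Remark~\ref{replace-Lie-by-Sum}(1), each $V_{t_j}$ is approximated by a linear combination $\sum c_i W_i$ of complete real holomorphic volume-preserving fields. Here the complex coefficients must be split as $c_i=a_i+\mathrm{i}b_i$, and I would keep only real combinations. This is legitimate because $\mathrm{i}W$ is not real holomorphic, whereas the real part of a $\C$-combination of real fields restricted to the real-form approximation is again a real combination after projecting $V\mapsto\tfrac12(V+\sigma_*\overline{V})$. The composition of the flows of $a_iW_i$ for time $1/N$, taken in order over $i$ and $j$, is then an element of $\saut_\Omega(X)$. Standard Euler-method estimates show it approximates $\phi$ in $C^k$ on $K$.

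Third, and this is the main obstacle, obtain Whitney approximation globally with $\Phi(X_\R)=X_\R$. I would choose an exhaustion $K_1\subset K_2\subset\cdots$ of $X_\R$ together with $\mathcal O(X)$-convex sets $L_m\supset K_m$. I would build $\Phi=\lim \Phi_m\circ\cdots\circ\Phi_1$ inductively, where $\Phi_m$ is close to the identity on $L_{m-1}$ and corrects the map on $K_m\setminus K_{m-1}$. This requires a relative version of step two: the isotopy from the current map to $\phi$ on the new shell must be approximated while staying near the identity on an $\mathcal O(X)$-convex set. For that I would use that $L_{m-1}\cup(\text{real piece})$ is polynomially convex, since totally real pieces attached to Stein compacts remain holomorphically convex. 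The isotopy should be modified to be stationary near $K_{m-1}$, with errors summable and shrinking fast enough that the limit converges to an automorphism, by the usual Forstnerič–Rosay convergence lemma. The delicate point is making each correcting isotopy volume-preserving, compactly supported near the shell, and connected to the identity without topological obstruction. I expect to handle this with Moser's lemma applied to cut-off generating forms, adding a small exact correction.
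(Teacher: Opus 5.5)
There is a genuine gap, and it is exactly where you noticed that ``the cohomology class must be matched''. In Step~2 you claim that $\iota_{v_t}\Omega_\R$ can be approximated by closed real holomorphic $(n-1)$-forms on $X$, either because $X$ ``retracts onto a neighborhood of $X_\R$'' or because ``the class is representable since $X$ is Stein''. Neither holds in general. If a closed form is approximated on a compact set by closed forms defined on $X$, then its periods on cycles in that set must be periods of a class in $H^{n-1}(X)$, and being Stein does not make this restriction map surjective.

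Here is a concrete example. Let $n\ge 2$ and let $X_\R=\{x_1^2+\dots+x_n^2-x_{n+1}^2=1\}\cong S^{n-1}\times\R$ be the one-sheeted hyperboloid in the affine quadric $Q=\{z_1^2+\dots+z_n^2-z_{n+1}^2=1\}$, with the restriction of the standard (residue) holomorphic volume form. Since $Q$ is homotopy equivalent to $S^n$, we have $H^{n-1}(Q,\C)=0$. So for every complete real holomorphic volume-preserving $W$, the form $\iota_W\Omega$ is exact, and the real flow of $W$ has zero flux through $\Sigma=S^{n-1}\times\{0\}$. Hence every composition of such flows has zero flux, where flux means the signed volume between $\Sigma$ and its image; this quantity is continuous in the $C^1$ topology. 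Now take the divergence-free field $v$ with $\iota_v\Omega_\R$ equal to the pulled-back area form of $S^{n-1}$; it is complete because both ends have infinite volume. Its flow has nonzero flux, so your scheme cannot approximate it.

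The same obstruction breaks Step~3. A divergence-free field vanishing near $K_{m-1}$ has zero flux through every hypersurface homologous to one in $K_{m-1}$. So it cannot agree outside $K_m$ with a field of nonzero flux. Your ``cut off and add a small exact correction'' only works when the generating $(n-1)$-form is itself exact, and even then you need quantitative control on a primitive.

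This is why the paper proves the statement as Theorem~\ref{Global-Carleman-vol}, which adds two hypotheses: $H^{n-1}(X_\R)=0$, and $\rho|_{X_\R}$ has no critical points outside $Z^\R_r$, so that the sublevel sets carry the topology of $X_\R$. The idea you are missing is to work with $(n-2)$-form primitives rather than closed $(n-1)$-forms. In Theorem~\ref{theorem: LocalCarleman-vol} one writes $\iota_{V_{s,t}}\Omega_\R=d\alpha_{s,t}$ with $\alpha_{s,t}$ vanishing on $Z^\R_{r'}$ (Lemma~\ref{n-2-form}(i)). One then approximates $\alpha_{s,t}$ on $Z^\R_{a'}\cup Z_{b'}$ by real holomorphic $(n-2)$-forms $\widetilde A$, using Mergelyan, Cartan--Oka--Weil and symmetrization, and defines $\widetilde W$ by $\iota_{\widetilde W}\Omega=d\widetilde A$. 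This field is automatically volume-preserving, and no class has to be matched. In the global induction every cut-off is done on primitives, via the field associated to $d(\chi\beta_t)$, with $\beta_t$ chosen $C^k$-small wherever the field is small. That is exactly the estimate of Lemma~\ref{n-2-form}(ii).

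Two smaller points. First, the principle that ``totally real pieces attached to holomorphically convex compacts stay holomorphically convex'' is false: already in $\C$, the upper unit half-circle arc together with $[-1,1]$ has the closed half-disc as its hull. You need specific sets, namely $Z^\R_{r_1}\cup Z_{r_2}$ as in Lemma~\ref{Saturn-holoconv}. Second, your Moser argument in Step~1 is not actually carried out on noncompact $X_\R$; the paper simply takes the isotopy inside $\diff_{\Omega_\R}(X_\R)$ as given.
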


\begin{theorem} \label{Carleman-ham}
    If a smooth manifold $X_\R$ with a real symplectic form $\omega_\R$ has a Stein Hamiltonian complexification $(X, \omega, \sigma)$ with the $\sigma$-Hamiltonian density property, then the Hamiltonian diffeomorphism group $\ham_{\omega_\R}(X_\R)$ of $X_\R$ is holomorphically approximable.
\end{theorem}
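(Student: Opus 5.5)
The plan is to follow the standard Anders\'en--Lempert scheme, but to carry it out equivariantly with respect to $\sigma$. Fix a Hamiltonian diffeomorphism $\phi$ of $X_\R$. By definition it is the time-$1$ map of a time-dependent Hamiltonian vector field $V_t$ with smooth Hamiltonian $h_t\colon X_\R\to\R$, $t\in[0,1]$. We are given a positive continuous $\epsilon$ and an integer $k$.

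The first step reduces the problem to compact pieces. Exhaust $X_\R$ by compacts $K_1\subset K_2\subset\cdots$. Because $\jmath$ is a proper embedding, these can be chosen so that they form, together with the induced exhaustion of $X$ by $\mathcal{O}(X)$-convex compacts $L_j$, sets of the form $L_j\cup X_\R$ that admit Carleman-type approximation. The global statement will then come from an inductive Carleman-type argument. At stage $j$ we construct $\Phi_j\in\saut_\omega(X)$ so that the composition $\Phi_j\circ\cdots\circ\Phi_1$ approximates $\phi$ on $K_j$ in $C^k$, while each $\Phi_j$ is very close to the identity on $L_{j-1}$. The compositions then converge on $X$ to an element of $\saut_\omega(X)$ by the usual convergence criterion for compositions of automorphisms close to the identity on an exhaustion. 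For this I would rely on the global results of Section~\ref{general-results}, namely Theorem~\ref{Global-Carleman-ham}, which the paper says is the precise form of the statement.

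The local step goes as follows.
\begin{enumerate}
\item Since Hamiltonian isotopies can be cut off, first modify $h_t$ so that it is compactly supported near the relevant piece.
\item Extend $h_t$ to a real holomorphic function $H_t$ on a neighbourhood of $X_\R$ in $X$. Use Carleman/Whitney approximation of smooth functions on the totally real submanifold $X_\R$ by entire functions on the Stein manifold $X$, and average with $\sigma$ via $H\mapsto \tfrac12(H+\overline{\sigma^*H})$ to keep the function real holomorphic. On $X_\R$ the Hamiltonian vector field of $H_t$ restricts to $V_t$ up to a small $C^k$ error, because $\jmath^*\omega=\omega_\R$.
\item Discretize time. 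Write $\phi$ approximately as a composition of time-$1/N$ flows of autonomous Hamiltonian fields $X_{H_{t_i}}$.
\item Apply the $\sigma$-Hamiltonian density property in the form of Remark~\ref{replace-Lie-by-Sum}(2). Each $H_{t_i}$ is approximated on a compact set by a finite sum $\sum_l c_l f_l$, where the $f_l$ are real holomorphic with complete Hamiltonian fields. Taking real parts of the coefficients, using $\sigma$-invariance and the fact that $H_{t_i}$ is real, we may assume the $c_l$ are real.
\item The flow of a sum is approximated by composing the individual flows (Trotter/Euler). The time-$s$ flow of $X_{c f}$ with $c,s$ real commutes with $\sigma$, since the field is real holomorphic, and it preserves $\omega$. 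So the composition lies in $\saut_\omega(X)$ and preserves $X_\R$.
\end{enumerate}
$C^k$ closeness on compacta follows from $C^0$ closeness on slightly larger compacta via the Cauchy estimates, because $X_\R$ is totally real.

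I expect the main obstacle to be upgrading approximation on compacta to Whitney approximation on all of $X_\R$. The difficulty is ensuring that the inductive automorphism $\Phi_j$ approximates a prescribed Hamiltonian isotopy on the new shell $K_j\setminus K_{j-1}$ while staying close to the identity on the holomorphically convex hull of the earlier pieces. I would first try to handle this by choosing $h_t$ at stage $j$ vanishing near $K_{j-1}$. I would then approximate it by entire real holomorphic functions that are small on $L_{j-1}$ and close to $h_t$ on a $\mathcal{O}(X)$-convex compact containing $L_{j-1}\cup K_j$; Carleman-type convexity of $L\cup X_\R$ should make this possible. Finally, I would run the density-property step only on the compact set swept out by the flow.
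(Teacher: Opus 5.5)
There is a genuine gap, and it is at the point you yourself flag as the main obstacle. Your local step (real-holomorphic approximation of the Hamiltonian, the $\sigma$-Hamiltonian density property with real coefficients, Euler/Trotter composition of complete flows) is essentially the paper's Theorem~\ref{theorem: LocalCarleman-ham}. The global induction is what fails. Appealing to Theorem~\ref{Global-Carleman-ham} is circular: that theorem \emph{is} the statement, and the paper proves Theorem~\ref{Carleman-ham} by proving it.

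At stage $j$ the map you must realize is the correction $E_j=\phi\circ(\Phi_{j-1}\circ\cdots\circ\Phi_1)^{-1}|_{X_\R}$. It is only \emph{approximately} the identity near the old region $\Phi_{j-1}\circ\cdots\circ\Phi_1(K_{j-1})$. Your fix, choosing $h_t$ vanishing near $K_{j-1}$, fails as stated:
\begin{itemize}
\item the original $h_t$, cut off, does not generate $E_j$;
\item a Hamiltonian isotopy that does end at $E_j$ (e.g.\ $\phi_t$ concatenated with the reversed flows building the $\Phi_i$) typically moves points of the old region far away at intermediate times, so multiplying its Hamiltonian by a cutoff changes the time-one map in an uncontrolled way.
\end{itemize}
Because your scheme tracks only endpoints, you never have an isotopy that stays near the identity on the old region for \emph{all} $t$. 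That is exactly what a cutoff with small error requires.

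The missing idea is to carry an isotopy along with the automorphism. Besides $\Phi_j\in\saut_\omega(X)$, the paper keeps a real Hamiltonian isotopy $\psi_{j,t}$ with two properties. First, $\psi_{j,t}=\id$ on $Z^\R_{s_j}$ for every $t$. Second, $\psi_{j,1}\circ\Phi_j\approx\varphi$ on all of $X_\R$ (conditions $(3_j)$, $(4_j)$). The local theorem approximates the whole isotopy $\psi_{j,t}$ uniformly in $t$ (hence the auxiliary variable $s$) by $\Psi_{j,t}$, which is close to the identity on $Z_{s_j}$. Then the remainders $(\Psi_{j,t})^{-1}\circ\psi_{j,t}$ and later $\psi_{j,t}\circ(\Psi_{j,t}\circ\Xi_{j,t})^{-1}$ are close to the identity on large compacts for every $t$. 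Their (suitably normalized) Hamiltonians can therefore be multiplied by cutoff functions at small cost. This produces $\tilde\xi_{j,t}$ and then the next remainder $\psi_{j+1,t}$, which is exactly the identity on $Z^\R_{s_{j+1}}$.

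Two further points you leave open are also needed.
\begin{itemize}
\item The interpolating local step requires $\mathcal{O}(X)$-convexity of a union of a complex compact and a real compact. This need not hold for an arbitrary $\mathcal{O}(X)$-convex $L_{j-1}$ and compact $K_j\subset X_\R$. The paper gets it from the Chirka--Smirnov theorem for $Z^\R_{r_1}\cup Z_{r_2}$ with a real center $p_0$; no Carleman convexity of $L\cup X_\R$ is used.
\item The sets on which $\Phi_j\approx\id$ cannot be fixed in advance. They must contain the images under $\Phi_{j-1}\circ\cdots\circ\Phi_1$ of the earlier real compacts, which is the bookkeeping $(1_j)$--$(2_j)$ with the radii $r_j,s_j$.
\end{itemize}
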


In the last section we give  concrete examples where the assumptions of the approximation theorems \ref{Global-Carleman}, \ref{Global-Carleman-vol} and \ref{Global-Carleman-ham} are fulfilled. The new results are Theorem \ref{G-Diffeo} and Theorem \ref{G-vol-Diffeo} on linear algebraic groups.

Furthermore we give criteria for the $\sigma$-density property and the $\sigma$-volume density property, Theorem \ref{sigma-DP} and Theorem \ref{sig-VDP}, based on similar known criteria for density properties developed by Kaliman and Kutzschebauch in \cites{MR2385667, MR3492044}. For the Hamiltonian case we give the list of up-to-date known examples, whereas criteria as in the density and volume density cases are still unavailable.

\section{Preliminaries}

Let $X$ and $Y$ be smooth manifolds and $p \in X$. Consider smooth maps $f, g \colon X \to Y$ with $f(p)=g(p)=q$. We say that $f$ has first order contact with $g$ at $p$ if $df_p = dg_p$ as maps of $T_pX \to T_q Y$; and $f$ has $k$th order contact with $g$ at $p$ if $df \colon TX \to TY$ has $(k-1)$th order contact with $dg$ at every point in $T_p X$. This gives an equivalence relation for smooth maps in $C^\infty(X,Y)$ for each positive integer $k$. Denote by $J^k(X,Y)_{p,q}$ the set of equivalence classes of maps $f \colon X \to Y$ with $f(p)=q$. The disjoint union $J^k(X,Y) := \cup_{(p,q) \in X \times Y} J^k(X,Y)_{p,q}$ may be given the structure of a smooth manifold. 

Every smooth map $f \colon X \to Y$ induces a smooth map $j^kf \colon X \to J^k(X,Y)$ defined by 
\[
    j^kf(p)= [f] \in J^k(X,Y)_{p,f(p)} \text{ for every } p \in X. 
\]
The family of sets $\{ f \in C^\infty(X,Y): j^kf(X) \subset U \}$ where $U$ is an open subset of $J^k(X,Y)$ form a basis for a topology on $C^\infty(X,Y)$, which is called the Whitney $C^k$-topology, see Golubitsky--Gullemin \cite{MR0341518}. By choosing Riemannian metrics on the manifolds $X$ and $Y$ one can equip the fibers of the jet space with norms, i.e. the jet space $J^k(X,Y)$ with a Finsler metric. Over a compact $X$ the jet space $J^k(X,Y)$  is then equipped with
a norm (the $C^k$-norm) compatible with the Whitney $C^k$-topology. By using a sequence of exhausting compacts of $X$  one gets a metric $d_k$ on the manifold $J^k(X,Y)$ compatible with this topology. 
\bigskip 

The proof of the approximation theorems \ref{Carleman}, \ref{Carleman-vol}, \ref{Carleman-ham}, goes by induction, where at each induction step we use the corresponding  $\sigma$-density property. We need an appropriate exhaustion of $X$ by $\sigma$-invariant, holomorphically convex compact subsets $S_j, j=1, 2,\dots$. The forthcoming considerations will be used to construct these subsets. 

\begin{lemma}
    Every Stein manifold $X$, endowed with an antiholomorphic involution $\sigma \colon X \to X$, admits a holomorphic embedding $G \colon X \to \C^q$ for some $q \in \N$ such that $G(X_\R) = G(X) \cap \R^{q}$. 
\end{lemma}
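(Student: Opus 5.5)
The plan is to start from an arbitrary proper holomorphic embedding of the Stein manifold $X$ and symmetrize it with respect to $\sigma$, so that the new embedding is real holomorphic. Real points of the image then correspond exactly to fixed points of $\sigma$. Throughout I read $X_\R$ as the fixed-point set $\mathrm{Fix}(\sigma)$; the final paragraph explains why this is the reading under which the statement can hold.

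\emph{Step 1: symmetrizing an embedding.} By the embedding theorem for Stein manifolds (Remmert--Bishop--Narasimhan), there is a proper holomorphic embedding $F\colon X\to\C^N$. For a holomorphic map $h$ on $X$ set $h^\sigma:=\overline{h\circ\sigma}$. This map is again holomorphic, since $\sigma$ is antiholomorphic and conjugation undoes the antiholomorphy. Define
\[
G=(G_1,G_2)\colon X\to\C^{2N},\qquad G_1=F+F^\sigma,\quad G_2=i\,(F-F^\sigma).
\]
Each of $G_1,G_2$ satisfies $G_j^\sigma=G_j$, that is, $G\circ\sigma=\overline{G}$, so $G$ is real holomorphic. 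Moreover $F=\tfrac12\bigl(G_1-iG_2\bigr)$, so $F$ factors through $G$ by a linear map $L$. Hence $G$ is injective and immersive because $F$ is, and $G$ is proper because $|F|\le C|G|$. Therefore $G$ is a proper holomorphic embedding into $\C^q$ with $q=2N$.

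\emph{Step 2: identifying the real points.} If $\sigma(x)=x$, then $G(x)=\overline{G(x)}$, so $G(x)\in\R^q$. Conversely, if $G(x)\in\R^q$, then
\[
G(\sigma(x))=\overline{G(x)}=G(x),
\]
and injectivity of $G$ gives $\sigma(x)=x$. Hence
\[
G(X)\cap\R^q=G(\mathrm{Fix}(\sigma)),
\]
which is the claim with $X_\R=\mathrm{Fix}(\sigma)$.

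\emph{Main obstacle: $X_\R$ a proper union of components.} In Definition~\ref{definition: realform}, $X_\R$ is allowed to be only a union of some components of $\mathrm{Fix}(\sigma)$, and this is where I expect the difficulty. The first idea would be to add extra holomorphic coordinates that are real on $X_\R$ but non-real on the remaining components $Y$ of $\mathrm{Fix}(\sigma)$.

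This cannot be done in general. Suppose $g$ is a holomorphic function with $g$ real on $X_\R$ and non-real somewhere on $Y$. Replace $g$ by $\operatorname{Im}$-type combinations using $g^\sigma$ to get a holomorphic function vanishing on $X_\R$. Since $X_\R$ is a totally real, real-analytic submanifold of maximal dimension $n$, the identity theorem forces that function to vanish on the whole connected component of $X$ containing $X_\R$. So if $X$ is connected and $Y\neq\emptyset$, no such embedding exists.

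I would therefore state and prove the lemma with $X_\R$ understood as the full fixed-point set $\mathrm{Fix}(\sigma)$, which is how it is used for the $\sigma$-invariant exhaustions that follow. In the general case the conclusion is $G(X)\cap\R^q=G(\mathrm{Fix}(\sigma))\supset G(X_\R)$. This weaker inclusion should suffice for constructing the sets $S_j$, since one can intersect with the closed set $G(X_\R)$ separately.
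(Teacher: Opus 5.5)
Your proof is correct and is essentially the paper's: the same symmetrization of a proper embedding $F$ by $f_k+\overline{f_k\circ\sigma}$ and $i(f_k-\overline{f_k\circ\sigma})$, with the paper's ``simple calculation'' being exactly your injectivity argument $G(\sigma(x))=\overline{G(x)}=G(x)\Rightarrow\sigma(x)=x$; you additionally verify that $G$ is a proper embedding via $F=\tfrac12(G_1-iG_2)$, which the paper leaves implicit. Your reading of $X_\R$ as $\mathrm{Fix}(\sigma)$ is also the paper's (its proof speaks of ``the fixed-point set $X_\R$''), and your side observation, via the identity theorem on a maximal totally real submanifold, that the statement cannot hold for a proper union of components of $\mathrm{Fix}(\sigma)$ when $X$ is connected is correct.
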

\begin{proof}
    Since $X$ is Stein, there are holomorphic functions $f_i \colon X \to \C$ such that
\[
    F \colon X \hookrightarrow \C^N, x \mapsto (f_1(x), f_2(x), \dots, f_N(x))
\]
is a proper holomorphic embedding. 
Then, we can choose $q = 2N$ and embed $X$ into $\C^{2N}$ as
\begin{align*}
    G \colon X \hookrightarrow \C^{2N}, x \mapsto (g_1(x), g_2(x), \dots, g_{2N}(x)),
\end{align*}
where
\begin{align*}
    g_{2k-1} = f_k + \overline{f_k \circ \sigma} \, \text{ and }  \, g_{2k} = i(f_k - \overline{f_k \circ \sigma} ), \, k = 1, 2, \dots, N, \, \, \text{ are real holomorphic.} 
\end{align*}
Since $G (\sigma(p)) = \overline{G(p)}$ for $p \in X$, the fixed-point set $X_\R$ is being embedded into $\R^{2N} \subseteq \C^{2N}$.  Moreover, a simple calculation, using that $F$ is injective, shows that $G(X_\R) = G(X) \cap \R^{2N}$.
\end{proof}

From now on, we view $X$ as a subvariety of $\C^{q}$ for some $q \in \N$ and $\sigma$ as the restriction of complex conjugation of $\C^{q}$ to $X$.  

\begin{definition} \label{sublevelset}
For $p_0$ in $\R^q$, the function
$\C^{q} \to \R_{\ge 0}, \, p \mapsto \| p - p_0 \|^2$ is $\sigma$-invariant and thus its restriction $\rho \colon X \to \R_{\ge 0}$ to $X$  is a $\sigma$-invariant strictly plurisubharmonic exhaustion function on $X$. 
We consider the closed sublevel set 
	\begin{align*} 
		Z_r = \{ p \in X: \rho(p) \le r \} \quad \text{ and } \quad	Z_r^\R = Z_r \cap X_\R	
	\end{align*}
the intersection of $Z_r$ with the fixed-point set. 
\end{definition}

\begin{lemma} \label{Saturn-holoconv}
    The union $S:= Z_{r_1}^\R \cup Z_{r_2}$ is holomorphically convex in $X$ for $r_1, r_2 \in  \R_{\ge 0}$. 
\end{lemma}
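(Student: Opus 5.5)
The plan is to prove the stronger statement that $S$ is polynomially convex as a compact subset of $\C^q$. We use the embedding $X\subset\C^q$ fixed above, with $\sigma$ equal to complex conjugation and $X_\R=X\cap\R^q$.

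\emph{Reduction to $\C^q$.} Since $X$ is a closed complex submanifold of $\C^q$, every polynomial on $\C^q$ restricts to a holomorphic function on $X$. Hence the $\holo(X)$-hull of a compact $K\subset X$ is contained in $\widehat K_{\C^q}\cap X$, where $\widehat K_{\C^q}$ denotes the polynomial hull. So it suffices to show $\widehat S_{\C^q}=S$.

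If $r_1\le r_2$, then $S=Z_{r_2}=X\cap \overline{B}(p_0,\sqrt{r_2})$. A closed ball is convex, hence polynomially convex, and $X$ is a closed subvariety. A point of $X\setminus Z_{r_2}$ is separated from $Z_{r_2}$ by a polynomial, because the ball is polynomially convex. We are then done, so we assume $r_1>r_2$ from now on.

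\emph{Main step: a Kallin-type lemma.} Put $K_1=Z_{r_2}$ and $K_2=Z^\R_{r_1}$. Both are polynomially convex in $\C^q$: $K_1$ as above, and $K_2$ because it is a compact subset of $\R^q$. Consider the polynomial
\[
f(z)=\sum_{j=1}^q (z_j-p_{0,j})^2 ,
\]
whose real part is $|\mathrm{Re}\,z-p_0|^2-|\mathrm{Im}\,z|^2$. On $K_2$ the point $z$ is real, so $f=\rho$ and $f(K_2)\subset[0,r_1]$. On $K_1$ we have $|f|\le \rho\le r_2$, so $f(K_1)$ lies in the closed disc $\overline{D}(0,r_2)$.

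We then invoke Kallin's lemma in its standard general form. Suppose $K_1,K_2$ are polynomially convex, $f$ is a polynomial, and $\widehat{f(K_1)}\cup\widehat{f(K_2)}$ is polynomially convex in $\C$ (here: a closed disc together with a segment, which has connected complement). Suppose further that $(K_1\cup K_2)\cap f^{-1}(L)$ is polynomially convex, where $L=\widehat{f(K_1)}\cap\widehat{f(K_2)}$. Then $K_1\cup K_2$ is polynomially convex.

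It remains to check the last hypothesis, with $L\subset[0,r_2]$. A point of $K_2$ with $f\in[0,r_2]$ has $\rho\le r_2$, so it already lies in $K_1$. Hence $(K_1\cup K_2)\cap f^{-1}(L)=K_1\cap f^{-1}(L)$. This is the intersection of the polynomially convex set $K_1$ with the preimage of the polynomially convex set $L\subset\C$ under a polynomial, and is therefore polynomially convex. Kallin's lemma then gives $\widehat S_{\C^q}=S$, and the reduction step yields holomorphic convexity in $X$.

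\emph{Main obstacle.} The delicate point is the choice of separating function, i.e.\ seeing that the pluriharmonic quantity $\mathrm{Re}\sum (z_j-p_{0,j})^2$ equals $\rho$ on the real part. This is what makes the images of the two pieces overlap only in a segment whose preimage is harmless. A second point is to make sure the version of Kallin's lemma being quoted allows a nontrivial overlap $L$, rather than the classical single-point intersection.

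If one prefers to avoid quoting that version, the fallback is to separate points directly with the plurisubharmonic functions $\rho-c\,\mathrm{Re} f$ for $c\in[0,1)$, together with functions $|g(f)|$ for polynomials $g$ that are large on $f(p)$ and small on $\overline{D}(0,r_2)\cup[0,r_1]$. Such $g$ exist by Runge's theorem as long as $f(p)$ lies off this set, and the remaining points with $f(p)$ in the set are handled as above.
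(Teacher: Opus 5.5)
\textbf{There is a genuine gap: the general form of Kallin's lemma that carries your main step is false.} Your reduction to polynomial convexity in $\C^q$ and your treatment of the case $r_1\le r_2$ are fine.

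Here is a counterexample in $\C^2$ with $f=z$. Let $\Gamma$ be the closed upper unit semicircle, and set
$K_2=[0,1]\times\{0\}$ and $K_1=\bigl((\Gamma\cup[-1,0])\times\{0\}\bigr)\cup\{(z,5+1/z):|z|=1\}$.
\begin{itemize}
\item Both pieces of $K_1$ are polynomially convex. The first is an arc in a complex line. The second is a circle on the curve $z(w-5)=1$, where polynomials become Laurent polynomials.
\item The two pieces are separated by $w$, whose images $\{0\}$ and the circle $|w-5|=1$ have disjoint hulls. Hence $K_1$ is polynomially convex.
\item $\widehat{f(K_1)}=\overline{D}(0,1)$ and $\widehat{f(K_2)}=[0,1]$. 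Their union is polynomially convex and $L=[0,1]$.
\item $(K_1\cup K_2)\cap f^{-1}(L)=[0,1]\times\{0\}\cup\{(1,6)\}\subset\R^2$ is polynomially convex.
\end{itemize}
Every hypothesis of your lemma holds. Yet $K_1\cup K_2$ contains the Jordan curve $(\Gamma\cup[-1,1])\times\{0\}$, so $(i/2,0)$ lies in its hull.

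The mechanism is exactly what fails in your setting. Kallin's argument localises a point $x\in\widehat K$ using a function in $P(Y_1\cup Y_2)$ that vanishes on $Y_2$ but not at $f(x)$, or that peaks on $L$. Once $L$ enters the interior of $Y_1$, no such function exists: it would be holomorphic on the open disc and vanish, or have constant modulus, on a segment there. So the points with $f(x)\in D(0,r_2)$, which are the whole difficulty, are left uncontrolled.

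Your fallback does not reach these points either. Take $X=\C^2$, $p_0=0$, and $x=(a,ia)$ with $a\in\R$ and $r_2<2a^2<2r_1r_2/(r_1+r_2)$. Then $x\notin S$ and $f(x)=0$, so no $|g(f)|$ separates $x$ from $S$. Moreover $\rho(x)-c\,\mathrm{Re}f(x)=2a^2<\max\{(1+c)r_2,(1-c)r_1\}=\sup_S(\rho-c\,\mathrm{Re}f)$ for every $c\in[0,1)$.

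\textbf{A repair.} The statement is nevertheless true, and your approach works if you split $S$ differently. Put $K_1=Z_{r_2}$ and $K_2'=\{x\in X_\R: r_2\le\rho(x)\le r_1\}$, so that $S=K_1\cup K_2'$.
\begin{itemize}
\item $f(K_1)\subset\overline D(0,r_2)$ and $f(K_2')\subset[r_2,r_1]$. These sets meet only at $r_2$, which is a peak point of $P(\overline D(0,r_2)\cup[r_2,r_1])$ by Mergelyan.
\item $S\cap f^{-1}(r_2)=\{x\in X_\R:\rho(x)=r_2\}$. Indeed, on $K_1$ the conditions $\mathrm{Re}f=\|\mathrm{Re}z-p_0\|^2-\|\mathrm{Im}z\|^2=r_2\ge\rho(z)$ force $\mathrm{Im}z=0$. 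This set is compact in $\R^q$.
\end{itemize}
So the classical single-point Kallin lemma applies. You also need polynomial convexity of $K_1$ in $\C^q$, not just the separation of points of $X$ you gave. The cleanest fix is to run the argument for the full complex ball and the full real shell in $\C^q$, then intersect with $X$: polynomials restrict to $\holo(X)$.

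\textbf{Comparison with the paper.} The paper quotes Chirka--Smirnov for the polynomial convexity in $\C^q$ of the real ball of radius $\sqrt{r_1}$ union the complex ball of radius $\sqrt{r_2}$. It then passes to $X$ via Cartan's Theorem B. Your repaired Kallin argument would make that cited input self-contained.
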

\begin{proof}
   % The only nontrivial case is $r_1 > r_2 > 0$. 
    By a result of Evgenii Chirka and Maxim Smirnov \cite{MR1155560}*{Theorem 2}, the union 
    \[
        \{ p \in \R^{q}: \| p - p_0 \|^2 \le r_1 \} \cup \{ p \in \C^{q}: \| p - p_0 \|^2 \le r_2 \}
    \]
    is polynomially convex in $\C^{q}$. Since $X$ is a complex subvariety of $\C^{q}$, its intersection $ Z_{r_1}^\R \cup Z_{r_2}$ with $X$ is holomorphically convex in $X$ by Cartan's Theorem B. 
\end{proof}

\section{Holomorphic Approximations via Density Properties}
\label{general-results}

\subsection{The Diffeomorphism Group}

Here comes the local version of the complexification result Theorem \ref{Carleman},
an approximation of Anders{\'e}n-Lempert type for diffeomorphisms of $X_\R$ uniformly on the compact subsets $S$. 
\begin{theorem} \label{theorem: LocalCarleman}
	Let $X_\R$ be a smooth manifold which admits a Stein complexification $(X, \sigma)$. Let $\varphi \colon [0,1] \times X_\RR \to X_\RR$ be a smooth isotopy of diffeomorphisms and $r \ge 0$ such that $\varphi_0 = \id$ and $\restr{\varphi_t}{Z_r^\RR} = \id_{Z_r^\RR}$ for all $t$ in $[0,1]$.
    
    If $X$ has the $\sigma$-density property, then for any $k \in \N$, any $\epsilon>0$, any $b \in (0,r)$ and any $a > b$, there exists a smooth isotopy of automorphisms $\Phi \colon [0,1] \times X \to X$ such that $\Phi_{t} \in \saut(X)$ and 
	\begin{align*}
		  d_k( j^k \Phi_{t}(p) - j^k \varphi_t(p)) &< \epsilon \text{ for all } p \in Z_{a}^\R, 	\\
		d_k( j^k \Phi_{t} (p) - j^k \mathrm{id} (p) )		&< \epsilon  \text{ for all } p \in Z_{b}, \quad \text{ for all } t \in [0,1].	 %\label{ApproxOnZb}
	\end{align*}
\end{theorem}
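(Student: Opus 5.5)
We follow the standard Andersén--Lempert scheme and adapt each step to $\sigma$. Let $V_t$ be the time-dependent smooth vector field on $X_\R$ generating the isotopy, $\frac{d}{dt}\varphi_t = V_t\circ\varphi_t$. Since $\varphi_t$ is the identity on $Z_r^\R$, $V_t$ vanishes on $Z_r^\R$. Fix $a'>a$ so large that $\varphi_t(Z_a^\R)\subset \operatorname{int} Z_{a'}^\R$ for all $t$, and fix $b<b'<r$. The set $S=Z_{a'}^\R\cup Z_{b'}$ is holomorphically convex by Lemma \ref{Saturn-holoconv}. We would show that the isotopy $\psi_t$ equal to $\varphi_t$ near $Z_{a}^\R$ and to the identity near $Z_{b}$ (these sets are disjoint away from $Z_r^\R$, where both agree) is the flow of a time-dependent vector field $W_t$. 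This field is defined on a neighborhood of $S$, is holomorphic near $Z_{b'}$ (it is $0$ there), and is tangent to $X_\R$ along $X_\R$. The main task is then to approximate $W_t$ in $C^k$ on $S$ by global real holomorphic vector fields, depending continuously on $t$.

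\textbf{Step 1 (Carleman-type approximation of the field).} Extend $V_t$ from $X_\R$ to a $\sigma$-equivariant smooth field on a tubular neighborhood. Using the totally real structure of $X_\R$, take it $\bar\partial$-flat to high order along $X_\R$ (for example via an almost-holomorphic extension of its coefficients). Glue it with $0$ near $Z_{b'}$; this is consistent because $V_t$ vanishes to infinite order on $Z_r^\R$ and hence near $\partial Z_{b'}\cap X_\R$. The resulting field is defined on a neighborhood of $S$ and has $\bar\partial$ small in $C^k$ on $S$. By holomorphic convexity of $S$ and the Stein property, Hörmander's $L^2$ estimates (or Mergelyan/Carleman approximation on totally real sets together with Oka--Weil) give a global holomorphic vector field $Y_t$ that is $C^k$-close to $W_t$ on $S$. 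Replacing $Y_t$ by $\tfrac12(Y_t+\overline{\sigma_*Y_t})$ makes it real holomorphic without spoiling the approximation, since $W_t$ is $\sigma$-equivariant. Continuity in $t$ follows by doing this on a fine partition of $[0,1]$ and interpolating piecewise linearly, which is harmless because only the flow is approximated. I expect this step to be the main obstacle: obtaining $C^k$ control on the real part $Z_{a'}^\R$, which has empty interior, while keeping smallness on the complex ball $Z_{b'}$.

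\textbf{Step 2 (density property).} By the $\sigma$-density property and Remark \ref{replace-Lie-by-Sum}(1), $Y_t$ is approximated on $S$, uniformly in $t$ after partitioning time, by finite sums $\sum_j c_j \theta_j$ of $\C$-complete real holomorphic fields with complex coefficients. Since $Y_t$ is real, we may average with $\sigma$. A real holomorphic field has $\overline{\sigma_*(c\theta)}=\bar c\,\theta$, so averaging forces the coefficients to be real. Flows of real multiples of real holomorphic complete fields commute with $\sigma$, hence lie in $\saut(X)$. Cauchy estimates on a slightly larger neighborhood upgrade $C^0$ approximation to $C^k$.

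\textbf{Step 3 (composition of flows).} On each small time interval $[t_i,t_{i+1}]$, approximate the flow of $W_t$ by a composition of the time-$(t-t_i)c_j$ flows of the $\theta_j$ (Euler/Trotter product), and concatenate. The standard Andersén--Lempert lemma gives convergence of such compositions to the flow, uniformly in $C^k$ on compacts in the interior of the domain where the flow stays. Our choice of $a'$ and $b'$ ensures that the trajectories starting in $Z_a^\R\cup Z_b$ remain in the interior of the neighborhood of $S$, so we can restrict to a slightly shrunken set. The resulting $\Phi_t$ is smooth in $t$ (after smoothing the concatenation times), lies in $\saut(X)$, and satisfies both estimates of the theorem.
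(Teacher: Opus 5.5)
Your proposal is correct and follows essentially the same route as the paper. You extend the generating field by zero near $Z_{b'}$, approximate it in $C^k$ on the holomorphically convex set $S=Z^\R_{a'}\cup Z_{b'}$ via Mergelyan plus Cartan--Oka--Weil, symmetrize with $\sigma$, invoke the $\sigma$-density property with real coefficients, and pass to automorphisms by an Euler/Trotter product of complete flows with Cauchy estimates.

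The one real difference is how time is handled. The paper introduces an auxiliary variable $s$ so that $\varphi_t$ is the time-$1$ map of $V_{s,t}$, and takes approximants polynomial in $(s,t)$, which makes the final product automatically smooth in $t$. You approximate the $t$-flow directly, so you need your piecewise-linear interpolation in $t$ and the smoothing at the concatenation times.
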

\begin{proof}
    We introduce an auxiliary time variable $s$
    \[
        \psi \colon [0,1] \times [0,1] \times X_\R \to X_\R, (s, t, p) \mapsto \varphi(st, p)
    \]
    and consider $\varphi_t$ as the time-1 map of the time-$s$ dependent vector field 
    \[
        V \colon [0,1] \times [0,1] \times X_\R \to T X_\R 
    \]
    defined by 
	\begin{align*} %\label{equation: V_t}
		\frac{d } {d s} \varphi_{st} (p) = V (s,t, \varphi_{st}(p)) = V_{s,t}(\varphi_{st}(p)) , \quad s, t \in [0,1], \, p \in X_\R,		
	\end{align*}
    with the initial condition $\psi(0,t,p) = \varphi(0,p) = p$. 
    Since $\restr{\varphi_t}{Z_r^\RR} = \id_{Z_r^\RR}$ for $t \in [0,1]$, we can extend $V_{s,t}$ to be 0 in an open neighborhood of $Z_{b'}$ for $s, t \in [0,1]$, where $b' \in (b,r)$. 
    Choose $a'>0$ such that 
    \begin{align*}
        \bigcup_{ t \in [0,1] }  \varphi_t ( Z_{a}^\RR ) \subset Z_{a'}^\RR. 
    \end{align*}
    Let $Q$ be a compact Hausdorff space whose points we denote by $q$. Given a continuously $q$-dependent function $h_q$ on $S= Z^\R_{a'} \cup Z_{b'}$ which is in $C^{k}(S) \cap \holo(Z_{b'})$, by the holomorphic convexity of $S$ in $X$ (Lemma \ref{Saturn-holoconv}) and the parametric version of the Mergelyan theorem by Forn{\ae}ss--Forstneri{\v c}--Wold \cite{MR4264040}*{Theorem 20}, $h_q$ is approximable in the $C^{k}$-norm by a continuously $q$-dependent function $P_q$ which is holomorphic in a neighborhood $\Omega$ of $S$. 
    The same is true for sections in the holomorphic tangent bundle since any holomorphic vector bundle on a Stein space is stably trivial, for details see e.g.\@ Forstneri{\v c} \cite{MR2975791}*{p.\@ 64} and Giraldo--S{\'a}nchez-Arellano \cite{MR4904178}*{Theorem 4.9}. Hence $V_{s,t}$ can be approximated in the $C^k$-norm on $S$ by a continuously $(s,t)$-dependent holomorphic vector field $W_{s,t}$ for $s,t \in [0,1]$.
    Moreover, we can assume that $W(s,t,p)$ is a finite sum of the form $\sum_{n,m} s^n t^m W_{nm}(p)$ by a patching argument as in the proof of \cite{MR1314745}*{Lemma 1.2}. 

    Then by 
    the Cartan--Oka--Weil theorem, we can approximate $W_{nm}$ on a relative compact Runge open neighborhood of $S$ by a holomorphic vector field $\widetilde{W}_{nm}$ on $X$. Since both $W_{nm}$ and $\widetilde{W}_{nm}$ are  holomorphic in a neighborhood of $S$, the approximation holds in the $C^{k}$-norm.

    Since $V_{s,t}$ is a real vector field on $X_\R$, we can replace $\widetilde{W}_{nm}$ by $(\widetilde{W}_{nm} + \overline{\sigma_* \widetilde{W}_{nm}})/2$ such that 
    \[
        \widetilde{W}_{s,t}(p) = \widetilde{W}(s,t,p) = \sum_{n,m} s^n t^m \widetilde{W}_{nm}(p)
    \]
    is real holomorphic with respect to $\sigma$ and approximates $V_{s,t}$. 

    By the $\sigma$-density property and Remark \ref{replace-Lie-by-Sum}, $\widetilde{W}_{nm}$ can be approximated on $\Omega$ by a complex linear combination $L$ of complete real holomorphic vector fields. 
    Since the real flows of real holomorphic vector fields preserve $X_\R$, we may assume that the coefficients in this linear combination are real. Indeed we can separate the coefficients of $L$ and thus also $L$ into real and imaginary parts $\widetilde{W}_{nm} \approx L_1 + i L_2$. Acting on both sides with $\overline{\sigma_*(\cdot)}$, we get $\widetilde{W}_{nm} \approx L_1 - i L_2$ since $\widetilde{W}_{nm}$ and $L_1, L_2$ are real holomorphic. Thus $L_2 \rvert_{\Omega} \approx 0$ and $L_1$ approximates $\widetilde{W}_{nm}$ on $\Omega$. Hence $\widetilde{W}_{s,t}$ can be approximated on $\Omega$ (thus $V_{s,t}$ on $S$) by a finite sum $\sum_{l=1}^N p_l(s,t) \widetilde{W}_l$ where $p_l(s,t)$ is a polynomial and $\widetilde{W}_l$ is a complete real holomorphic vector field on $X$.

    By definition for all $t \in [0,1]$ the vector field  $V_{s,t}$ is integrable for time $s \in [0,1]$ for all starting points $x \in Z_a^\R \cup Z_b$ and its flow is contained in $S = Z^\R_{a'} \cup Z_{b'} \subset \Omega$, thus there is an open neighborhood $U$ containing $Z_a^\R \cup Z_b$ such that $\sum_{l=1}^N p_l(s,t) \widetilde{W}_l$ is integrable for time $s \in [0,1]$ for all initial points $x$ in $U$. 

    For a fixed $n \in \NN$, $j \in \{0, 1, \dots, n-1\}$, we denote by $\Phi^{(s, l)}_\theta$ the time-$\theta$ flow map of the time-$s$ independent vector field $ p_l(s,t) \widetilde{W}_l$, $l \in \{ 1, \dots, N\}$ and consider the composition
    \begin{align*} %\label{algorithm}
        \Phi^{(s)}_{\theta} =  \Phi^{(s, 1)}_{\theta} \circ \Phi^{(s, 2)}_{\theta} \circ \cdots \circ \Phi^{(s,N)}_{\theta} .
    \end{align*}
    Now we take 
    \begin{align} \label{composition}
		 \Phi^{(\frac{n-1}{n})}_{1/n} \circ  \Phi^{(\frac{n-2}{n})}_{1/n}  \circ \cdots \circ  \Phi^{(\frac{1}{n})}_{1/n} \circ  \Phi^{(0)}_{1/n} 
	\end{align}
    which approximates the time-$1$ flow of $\sum_{l=1}^N p_l(s,t) \widetilde{W}_l$ uniformly on a relatively compact open neighborhood of any compact subset in $U$ for large enough $n$ by Abraham--Marsden \cite{MR0515141}*{Theorem 2.1.26}. Indeed an algorithm for the vector field $\left( 1, \sum_{l=1}^N p_l(s,t) \widetilde{W}_l \right) $ on $[0,1] \times ([0,1]\times X)$ is given by 
    \begin{align*}
        [0, \epsilon) \times [0,1]^2 \times X \to  [0,1]^2 \times X, \quad (\theta, s, t, p) \mapsto (s + \theta, t, \Phi^{(s)}_\theta(p))  
    \end{align*}
    whose $n$th iterate after projecting to $X$ is \eqref{composition}. 
    By the Cauchy estimate, the approximation is in the $C^k$-norm and \eqref{composition} depends smoothly on $t$. 
    
    Finally, since approximation of $V_{s,t}$ by the vector field $\sum_{l=1}^N p_l(s,t) \widetilde{W}_l$ implies the approximation of $\varphi_t$ by the time-1 flow of $\sum_{l=1}^N p_l(s,t) \widetilde{W}_l$, it follows that \eqref{composition} approximates $\varphi_t$ on a relatively compact open neighborhood of $Z^\R_a \cap Z_b$. 
\end{proof}

\begin{remark} \label{Local-Carleman-w/o-interpolation}
    The same proof also shows that for any $a \ge 0$ and any smooth isotopy of diffeomorphisms $\varphi \colon [0,1] \times X_\RR \to X_\RR$, the $\sigma$-density property of $X$ implies the existence of a smooth isotopy of automorphisms $\Phi \colon [0,1] \times X \to X$ such that $\Phi_{t} \in \saut(X)$ and $d_k( j^k \Phi_{t}(p) - j^k \varphi_t(p)) < \epsilon$ for all $p \in Z_{a}^\R$ and $t \in [0,1]$.
\end{remark}

\begin{theorem} \label{Global-Carleman}
    Let $X_\R$ be a smooth manifold which admits a Stein complexification $(X, \sigma)$. If $X$ has the $\sigma$-density property, then for every diffeomorphism $\varphi \in \diff(X_\R)$ smoothly isotopic to $\id$, for any $k \in \N$ and any positive continuous function $\epsilon$ on $X_\R$, there exists a holomorphic automorphism $\Phi \in \saut(X)$ such that
	\[	
        d_k( j^k \Phi (p) - j^k \varphi(p) ) < \epsilon (p) \quad \text{ for all }\, p \in X_\R.	
    \]
\end{theorem}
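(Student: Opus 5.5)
We prove the statement by an inductive Andersén--Lempert type construction, in the spirit of the classical proofs of Carleman approximation. The local tools are Theorem~\ref{theorem: LocalCarleman} and Remark~\ref{Local-Carleman-w/o-interpolation}. The idea is to write $\varphi$ as the limit of a sequence of automorphisms
\[
\Phi^{(j)} = \Psi_j \circ \cdots \circ \Psi_1, \qquad \Psi_i \in \saut(X).
\]
Each factor $\Psi_j$ is close to the identity on a large $\sigma$-invariant compact $Z_{b_j}$. On the next real shell it corrects the error that remains between $\Phi^{(j-1)}$ and $\varphi$. The limit is then an automorphism of $X$, by the standard convergence criterion for compositions of automorphisms that converge rapidly to the identity on an exhaustion by holomorphically convex compacts; these compacts are provided by Lemma~\ref{Saturn-holoconv}.

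\textbf{Setting up the induction.} Fix an isotopy $\varphi_t$ from $\id$ to $\varphi$, and an increasing sequence $r_j \to \infty$. Let $\epsilon_j = \min_{Z^\R_{r_{j+1}}} \epsilon$; we will work with a summable sequence $\delta_j \ll \epsilon_j$. For the first step, apply Remark~\ref{Local-Carleman-w/o-interpolation} to get $\Phi^{(1)} \in \saut(X)$ with $\Phi^{(1)}$ $\delta_1$-close to $\varphi$ in $C^k$ on $Z^\R_{r_2}$.

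For the inductive step, suppose $\Phi^{(j)}$ is $C^k$-close to $\varphi$ on $Z^\R_{r_{j+1}}$. Set $\chi_j := \varphi \circ (\Phi^{(j)})^{-1}$ on $X_\R$. This map is close to the identity on $\Phi^{(j)}(Z^\R_{r_{j+1}})$, which contains a slightly smaller real ball. Since $\Phi^{(j)}|_{X_\R}$ is isotopic to $\id$ (it is the time-$1$ map of an isotopy in $\saut(X)$), the map $\chi_j$ is isotopic to the identity.

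The main obstacle is the next step: we must modify $\chi_j$ into a diffeomorphism $\tilde\chi_j$ that is exactly the identity on some $Z^\R_{r}$ with $r$ slightly less than $r_{j}$, agrees with $\chi_j$ outside $Z^\R_{r_{j+1}}$, and is isotopic to $\id$ through diffeomorphisms that are the identity on $Z^\R_r$. The hypothesis $\restr{\varphi_t}{Z_r^\RR}=\id$ in Theorem~\ref{theorem: LocalCarleman} forces this. I would handle it as follows. Because $\chi_j$ is $C^k$-small on the collar, hence $C^1$-close to $\id$, we can write $\chi_j(x)=\exp_x(v(x))$ there for a small vector field $v$. Then interpolate, setting $\tilde\chi_j(x)=\exp_x(\lambda(x) v(x))$ with a cutoff $\lambda$, which remains a diffeomorphism. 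For the isotopy, first straighten along $t\mapsto \exp(t\lambda v)$ and then use an isotopy of $\chi_j$ to $\id$. The compact support of the relevant correction near $Z^\R_{r}$ should let it be absorbed; it may be cleaner to choose the isotopy $\varphi_t$ and the $\Phi^{(j)}$ together, so that $\chi_j$ comes with an isotopy fixing a ball.

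\textbf{Inductive step and convergence.} Apply Theorem~\ref{theorem: LocalCarleman} to $\tilde\chi_j$ with $b=b_j<r$ and $a$ large enough that $Z^\R_a \supset \Phi^{(j)}(Z^\R_{r_{j+2}})$. This yields $\Psi_{j+1}\in\saut(X)$ that is $\delta_{j+1}$-close to $\id$ on $Z_{b_j}$ and to $\tilde\chi_j$ on $Z^\R_a$. Set $\Phi^{(j+1)}=\Psi_{j+1}\circ\Phi^{(j)}$. Then $\Phi^{(j+1)}$ is close to $\varphi$ on $Z^\R_{r_{j+2}}$, and the error on $Z^\R_{r_{j+1}}$ increases only by $O(\delta_{j+1})$, where the constant is controlled by the $C^k$ norms of $\Phi^{(j)}$ on compacts.

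Choosing the $\delta_j$ inductively small and $b_j\to\infty$, the maps $\Phi^{(j)}$ converge uniformly on compacts of $X$ to a holomorphic map $\Phi$. The same holds for the inverses $(\Phi^{(j)})^{-1}$, by the usual lemma that requires $\delta_{j+1}$ small relative to the previous maps. Hence $\Phi\in\aut(X)$, and it commutes with $\sigma$ as a limit of $\sigma$-equivariant maps. Summing the errors, together with Cauchy estimates for $C^k$ convergence, gives $d_k(j^k\Phi(p),j^k\varphi(p))<\epsilon(p)$ on each shell $Z^\R_{r_{j+1}}\setminus Z^\R_{r_j}$.
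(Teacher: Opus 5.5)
The overall scheme is sound: compose automorphisms that are close to the identity on a growing exhaustion, and correct the remaining error further out. But the step you flag as the main obstacle is not carried out, and the fix you sketch does not work in general. Theorem~\ref{theorem: LocalCarleman} needs an isotopy from $\id$ to $\tilde\chi_j$ that is the identity on $Z^\R_r$ for \emph{every} $t$.

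Interpolating via $\exp_x(\lambda(x)v(x))$ only produces the time-one map with this property. An arbitrary isotopy from $\id$ to $\chi_j$ moves $Z^\R_r$ around. A diffeomorphism that fixes $Z^\R_r$ and is isotopic to $\id$ need not be isotopic to $\id$ rel $Z^\R_r$. The restriction of the isotopy to $Z^\R_r$ is a loop in the space of embeddings of $Z^\R_r$ into $X_\R$. It can be removed only if it is homotopic to a loop induced by diffeomorphisms of $X_\R$. Disk-pushing maps on a punctured surface of genus $\geq 2$ show this can fail. So ``the compact support of the correction should let it be absorbed'' is not an argument, and as a topological claim it is false in general.

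The missing idea is the one you hint at in your last sentence, and it is exactly what the paper does. You must carry an isotopy through the induction, and use that Theorem~\ref{theorem: LocalCarleman} and Remark~\ref{Local-Carleman-w/o-interpolation} approximate the given isotopy uniformly in $t$, not just its endpoint.

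The paper keeps a residual isotopy $\psi_{j,t}$ of $X_\R$ that is the identity on $Z^\R_{s_j}$ for all $t$ and satisfies $\psi_{j,1}\circ\Phi_j\approx\varphi$ on all of $X_\R$. At each step it approximates $\psi_{j,t}$, for every $t$, by $\Psi_{j,t}\circ\Xi_{j,t}$, where $\Psi_{j,t}$ and $\Xi_{j,t}$ are isotopies in $\saut(X)$. Hence the new residual isotopy $\psi_{j,t}\circ(\Psi_{j,t}\circ\Xi_{j,t})^{-1}$ is $C^k$-close to the identity on a large real compact \emph{for all} $t$. The base case does the same with the generator of $\varphi_t\circ A_t^{-1}$.

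Multiplying the time-dependent generating vector field by a cutoff that vanishes on $Z^\R_b$ then gives an isotopy with three properties: it is exactly the identity on $Z^\R_b$, it is unchanged far out, and it is close to the original in between. Because the isotopy stays uniformly near the identity on the inner region, the topological obstruction never arises.

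With this bookkeeping your argument becomes essentially the paper's. You would also need to tighten the loose relation between sublevel sets in the source and target of $\Phi^{(j)}$; the paper handles this with $(1_j)$ and the choices of $c,d$.
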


\begin{proof}
    By assumption there exists a smooth isotopy $\varphi_t$ in $\diff(X_\R), t \in [0,1]$ with $\varphi_0 = \mathrm{id}$ and $\varphi_1= \varphi$. 
	%\smallskip
	Given any natural number $j$ and any positive $\varepsilon_j$, assume that we have a holomorphic automorphism $\Phi_j \in \saut(X)$, a smooth isotopy $\psi_{j,t}$ of diffeomorphisms of $X_\R$ with $\psi_{j,0}=\id$, real numbers $r_j, s_j$ with $r_j \ge j-1, s_j \ge r_j + 1$ such that
	\smallskip
	\begin{enumerate}
		\item[($1_j$)] 	The image of $ Z_{r_j} $ under $\Phi_j$ is contained in $Z_{s_j}$. 
		\item[($2_j$)]	For $j \ge 2$ 
					\[	 d_k (j^k \Phi_j (p) - j^k \Phi_{j-1}(p) ) < \varepsilon_j \quad  \forall \, p \in Z_{r_{j-1}}.
                    \] %, \quad  \| \Phi^{-1}_j - \Phi^{-1}_{j-1} \|_{C^k(Z_{s_{j-1}})} < \varepsilon_j		\]
		\item[($3_j$)]	On $Z^\R_{s_j }$ we have $\psi_{j,t}$ is the identity for $t$ in $[0,1]$. %\textcolor{red}{why $\varepsilon_j$?}
		\item[($4_j$)]	For all $p$ in $X_\R$
					\[	d_k (j^k (\psi_{j, 1} \circ \Phi_j) (p) - j^k \varphi (p) ) < \epsilon (p).
                    \]
	\end{enumerate}

    \textbf{Induction base}: For $j = 1$ take $p_0 \in X_\R$ and \textcolor{black}{$r_1 = 0$}. In this case we have 
	\[
		Z_0 = Z_0^\RR =  \{ p_0 \} \subset X_\RR
	\]
	from Definition \ref{sublevelset} and by the choice of the strictly plurisubharmonic exhaustion function. 
	Choose \textcolor{black}{$s_1 \ge 1$} so that 
	\begin{align} \label{equation: S_1}
		\varphi_1 \left( Z_{ 1}^\RR \right) \ssubset Z_{s_1}^\RR.
	\end{align}
	Choose $T > s_1 $. Then by \eqref{equation: S_1}
	\begin{align}
		Z_{0}^\RR \subset Z_1^\RR  \ssubset (\varphi_1)^{-1} \left( Z_{s_1}^\RR \right) \subset (\varphi_1)^{-1} \left( Z_{T}^\RR \right).	\label{equation: R_1}	
	\end{align}
	Using Remark \ref{Local-Carleman-w/o-interpolation} we get a smooth isotopy $A_t \in \saut(X)$ approximating $\varphi_t$ on the compact subset 
	\[
		\bigcup_{ t \in [0,1]} (\varphi_t)^{-1} \left( Z_{T + 3 }^\RR \right)
	\]
	of $X_\RR$. Since $A_t$ approximates $\varphi_t$ on this compact, we have 
	\begin{align}
		(A_t)^{-1} \left( Z_{T + 2}^\RR \right) 	\ssubset  	(\varphi_t)^{-1} \left( Z_{T + 3}^\RR \right).  \label{equation: r_3}
	\end{align}
	In particular, $A_1$ approximates $\varphi_1$ on $A_1^{-1} \left( Z_{T + 2}^\RR \right) $, hence $\varphi_t \circ \restr{A_t^{-1}}{X_\RR}$ is close to the identity on $Z_{T + 2}^\RR$. Choose \textcolor{black}{$\Phi_1 = A_1$}.
	
	%\smallskip
	To construct an isotopy $\psi_{1,t}$ we interpolate between the identity on $Z_{T}^\RR $ and $\varphi_t \circ \restr{A_t^{-1}}{X_\RR}$ outside $Z_{T + 2}^\RR$. More precisely, let $W_t$ be the infinitesimal generator of the isotopy $\varphi_t \circ \restr{A_t^{-1}}{X_\RR}$. Fix $\psi_{1,t}$ to be the identity on $Z_{T}^\RR$ by multiplying $W_t$ with a cutoff function $\gamma$, which is zero on $Z_{T}^\RR$ and one outside $ Z_{T + 1}^\RR $. %Notice that these are still subsets of $\camo^\RR$ because $A_1$ preserves $\camo^\RR$. 
	Namely, we take \textcolor{black}{$\psi_{1,t}$} to be the time-$t$ flow of the vector field $\gamma W_t$. 
	
	By the choice of $T$ we have that $Z_{s_1}^\RR$ is contained in $Z_{T}^\RR$, which implies that $\psi_{1,t}$ is the identity on $Z_{s_1}^\RR$ for all $t$ in $[0,1]$. This shows $(3_1)$. 
	\smallskip 
	
	To see that $(4_1)$ is satisfied, let $p \in X_\RR$ and consider separately
	\begin{enumerate}[label=(\roman*)]
		\item		$p \in A_1^{-1} \left( Z_{T}^\RR \right)$: $\psi_{1,1}$ is the identity at $A_1 (p)$ and $A_1$ approximates $\varphi_1$ by \eqref{equation: r_3}. 
		\smallskip
		\item		$p \notin A_1^{-1} \left( Z_{T + 2}^\RR \right)$: $\psi_{1,1}$ is equal to $\varphi_1 \circ A_1^{-1}$ at $A_1 (p)$ by the choice of $\gamma$.
		\smallskip
		\item		$p \in A_1^{-1} \left( Z_{T + 2}^\RR \setminus Z_{T}^\RR \right)$: $A_1$ approximates $\varphi_1$ by \eqref{equation: r_3} and $\psi_{1,1} \circ \varphi_1$ is the interpolation between $\mathrm{id} \circ \varphi_1$ 				and $\varphi_1 \circ A_1^{-1} \circ \varphi_1$. Here $\varphi_1 \sim A_1$ implies $A_1^{-1} \circ \varphi_1 \sim \mathrm{id}$.  
	\end{enumerate}
	
	Last, let us check that $(1_1)$ holds. By the choice of $A_t$, we may assume that there exists a small positive $\delta$ which is less than one, such that 
	\[
		\Phi_1 \left( Z^\RR_{0} \right) \subset 	\varphi_1 \left( Z_{ \delta}^\RR \right) \subset \varphi_1 \left( Z_{ 1}^\RR \right) \ssubset Z_{s_1}^\RR .
	\]
	The first inclusion follows from the fact that $\Phi_1 = A_1$ approximates $\varphi_1$ by \eqref{equation: R_1} and the last inclusion is due to the choice of $s_1$ in $\eqref{equation: S_1}$. This concludes the induction base. 
    \bigskip

    \textbf{Induction step}: Take \textcolor{black}{$r_{j+1} = s_j + 1$} and choose $a > \max \{ r_j + 1, r_{j+1} \}$ such that
	\begin{align}
		%\bigcup_{ t \in [0,1]} \psi_{j,t} (Z^\RR_{R_{j+1}}) &\ssubset Z^\RR_{a} 	\nonumber	\\
		%\bigcup_{ t \in [0,1]}  (\psi_{j,t})^{-1}( Z^\RR_{R_{j+1}}) &\ssubset  Z^\RR_{a} 	\nonumber \\
		\Phi_j ( Z_{r_{j+1}} ) \ssubset	Z_a. \label{equation: settinga}
	\end{align}	
	By Theorem \ref{theorem: LocalCarleman} there exists a smooth isotopy $\Psi_{j,t}$ in $\saut(X)$, which approximates the identity near $Z_{s_j}$ and $\psi_{j,t}$ near $Z^\RR_{a + 2}$.  % Using $R = S_j + \varepsilon_j$, $a = a +2$, $b=S_j$
	Thus %the symplectic diffeomorphism
	\[	
		\xi_{j,t} = ( \restr{\Psi_{j,t}}{X_\RR} )^{-1} \circ \psi_{j,t}	
	\]
	approximates the identity on $Z_{a+2}^\RR$. %Being the composition of two Hamiltonian isotopies, $\sigma_{j,t}$ is also a Hamiltonian isotopy. 
	
	%\smallskip
	Moreover take a cutoff function $\chi$ on $X_\RR$ such that it is zero on $Z^\RR_a$ and equal to one outside $Z_{a+1}^\RR$. Let $V_t$ be the infinitesimal generator of the smooth isotopy $\xi_{j,t}$ and let $\tilde{\xi}_{j,t}$ be the flow map of the vector field $\chi V_t$. Then $\tilde{\xi}_{j,t}$ is the identity on $Z^\RR_a$, close to the identity on $Z^\RR_{a+2}$, and equal to $\xi_{j,t}$ outside $Z^\RR_{a+2}$.
	
	%\smallskip
	Then we have on $X_\RR$
	\begin{align}
		\Psi_{j,t} \circ \tilde{\xi}_{j,t}  &\sim  \psi_{j,t},  \label{equation: psi_jt} \\ 
		(\Psi_{j,t} \circ \tilde{\xi}_{j,t})^{-1} &\sim (\psi_{j,t})^{-1},  \nonumber
	\end{align}	
	by the choices of $\Psi_{j,t}, \xi_{j,t}, \tilde{\xi}_{j,t}$.
	Next, choose \textcolor{black}{$s_{j+1} > a $} so that 
	\begin{align} \label{equation: Sj+1FromPsi}
		\Psi_{j,1} ( Z_a ) \ssubset Z_{s_{j+1}}. 	
	\end{align}
	Set $b = s_{j+1} + 1$ and pick $c$ and $d$ so that
	\begin{align} \label{equation: settingc}
		Z_{b+2}^\RR &\ssubset \varphi \left( Z_c^\RR \right), \\
		\Phi_j   \left(  Z_c^\RR \right) &\ssubset Z_d^\RR, \label{equation: settingd1} \\
		Z_{c}^\RR &\ssubset \psi_{j,1} \left( Z_d^\RR \right). \label{equation: settingd2}
	\end{align}
	
	Next, apply Theorem \ref{theorem: LocalCarleman} to obtain an isotopy $\Xi_{j,t} \in \saut(X)$, which approximate $\tilde{\xi}_{j,t}$ on $Z^\RR_{d}$ and the identity on $Z_a$. Set \textcolor{black}{$\Phi_{j+1} = \Psi_{j,1} \circ \Xi_{j,1} \circ \Phi_j$}. The above choices of $c$ and $d$ allow us to approximate on $Z_c^\RR$
	\begin{align*}
		\varphi \sim \psi_{j,1} \circ \Phi_j \sim \Psi_{j,1} \circ \tilde{\xi}_{j,1} \circ \Phi_j \sim \Psi_{j,1} \circ \Xi_{j,1} \circ \Phi_j  = \Phi_{j+1},
	\end{align*}
	where the first approximation comes from $(4_j)$, the second by \eqref{equation: psi_jt}, and the third due to \eqref{equation: settingd1}. Combining this with \eqref{equation: settingc} we have
	\begin{align} \label{equation: lambda=one}
		Z_{b+2}^\RR  \ssubset \Phi_{j+1} \left( Z_{c}^\RR \right).
	\end{align}
	Furthermore consider the isotopy
	\[	
		\hat{\xi}_{j,t} = \psi_{j,t} \circ  \restr {(\Psi_{j,t} \circ \Xi_{j,t} )^{-1} }{X_\RR} 	
	\]
	and its time derivative $\hat{V}_t$. Moreover let $\lambda$ be a cutoff function on $X_\RR$ such that it is zero on $Z^\RR_{b}$ and equal to one outside $Z^\RR_{b+1}$. Finally let \textcolor{black}{$\psi_{j+1, t}$} denote the flow map of the vector field $\lambda \hat{V}_t$. 
    \medskip

    We check the conditions for the induction step: 
	
		($1_{j+1}$)	By the definition of $\Phi_{j+1}$ we have that
						\[
							\Phi_{j+1} (Z_{r_{j+1}}) = \Psi_{j,1} \circ \Xi_{j,1} \circ \Phi_j (Z_{r_{j+1}}).
						\]
						The claim follows from the fact that $\Xi_{j,t}$ approximates the identity on $Z_a$ and from the choices of $a, s_{j+1}$ in \eqref{equation: settinga}, \eqref{equation: Sj+1FromPsi} respectively. 
		%\smallskip
		
		($2_{j+1}$)	Here we want to estimate 
						\[
							 d_k( j^k \Phi_{j+1} (p) - j^k \Phi_{j}(p) )  =  d_k(j^k (\Psi_{j,1} \circ \Xi_{j,1} \circ \Phi_j)(p) - j^k \Phi_{j} (p)), \quad \forall p \in Z_{r_{j}}.
						\] 
						By $(1_j)$ we have 
						\[ 	
							\Phi_j(Z_{r_j}) \subset Z_{s_j}.		
						\] 
						The estimate follows from the fact that $\Psi_{j,1}$ and $ \Xi_{j,1}$ approximate the identity on $Z_{s_j}$. \iffalse For the inverses
						\[ 
							\| \Phi^{-1}_{j+1} - \Phi^{-1}_{j} \|_{C^k(Z_{S_{j}})} =  \|  \Phi^{-1}_j \circ \Xi^{-1}_{j,1} \circ \Psi^{-1}_{j,1} - \Phi^{-1}_{j} \|_{C^k(Z_{S_{j}})} 
						\]
						the claim follows by a similar argument. \fi
		%\smallskip
		
		%($3_{j+1}$)	A flow map at time zero is the identity.
		%\smallskip
		
		($3_{j+1}$)	Because $\psi_{j+1,t}$ is obtained by interpolating between the identity on $Z_b^\RR$ and $\hat{\xi}_{j,t}$ outside $Z_{b+2}^\RR$, it follows that $\psi_{j+1,t} (p) = (p)$ for $p$ near $Z^\RR_{s_{j+1}} $ because $b = s_{j+1} + 1$.
		%\smallskip
		
		($4_{j+1}$)	To see that 
						\[
							 d_k (j^k (\psi_{j+1, 1} \circ \Phi_{j+1})(p) - j^k \varphi (p) ) < \epsilon (p), \quad \forall p \in X_\RR,
						\]
						we consider separately: 
		%\smallskip
			\begin{enumerate}[label=(\roman*)]
				\item		On the complement of $(\Phi_{j+1})^{-1} \left( Z^\RR_{c} \right)$, we have $\lambda = 1$ at $\Phi_{j+1}(p)$ thanks to \eqref{equation: lambda=one}.
						Then $ \psi_{j+1, 1}$ is 
						\[	
						\hat{\xi}_{j,1} = \psi_{j,1} \circ ( \Psi_{j,1} \circ \Xi_{j,1} )^{-1}, 
						\]
						which together with $\Phi_{j+1} = \Psi_{j,1} \circ \Xi_{j,1} \circ \Phi_j$ reduces this case to $(4_j)$. 
				\item		On $\Phi_j^{-1} \left( Z_d^\RR \right)$, we have by the choice of $\Xi_{j,1}$ that 
						\begin{align}\label{equation: phi(j+1)}
						\Phi_{j+1} = \Psi_{j,1} \circ \Xi_{j, 1} \circ \Phi_j  \sim \Psi_{j,1} \circ \tilde{\xi}_{j, 1} \circ \Phi_j  \sim \psi_{j,1} \circ \Phi_j	,
						\end{align}
						which approximates $\varphi$ by the induction hypothesis $(4_j)$. Moreover, this implies that $\Psi_{j,1} \circ \Xi_{j, 1} \sim \psi_{j,1}$ on $Z_d^\RR$. Then on 
						\begin{align*} 
							\Phi_{j+1} \circ \Phi_j^{-1} \left( Z_d^\RR \right)  = \Psi_{j,1} \circ \Xi_{j, 1} \left( Z_d^\RR \right)	
						\end{align*}
						we have
						\begin{align*}
							\hat{\xi}_{j,1}  &= 	\psi_{j,1} \circ \left(  \Psi_{j,1} \circ \Xi_{j,1} \right)^{-1}  \\
							&\sim \psi_{j,1} \circ \left(\psi_{j,1} \right)^{-1}  =  \mathrm{id}	.
						\end{align*}
						Since 
						\[
							Z_{b+2}^\RR \subset \Phi_{j+1}(Z_c^\RR) \subset \Phi_{j+1} \circ \Phi_j^{-1} (Z_d^\RR) ,
						\]
						this justifies our interpolation $\psi_{j+1, t}$ between the identity on $Z_b^\RR$ and $\hat{\xi}_{j,t}$ outside $Z_{b+2}^\RR$. Hence $\psi_{j+1,1}$ is either close to the identity (reducing to the above \eqref{equation: phi(j+1)}) or equal to 
						\[	\hat{\xi}_{j,1} = \psi_{j,1} \circ  \restr {(\Psi_{j,1} \circ \Xi_{j,1} )^{-1} }{X_\RR} ,
						\]
						which composed with $\Phi_{j+1}$ again reduces it to $(4_j)$.
				\item		It suffices to consider the above two cases, because by the choice of $d$ in \eqref{equation: settingd2} and the approximation $ \psi_{j,1} \sim \Psi_{j,1} \circ \tilde{\xi}_{j,1} $ in \eqref{equation: psi_jt}
						\[	
							Z_{c}^\RR \ssubset \Psi_{j,1} \circ \Xi_{j,1} \left( Z_d^\RR \right) = \Phi_{j+1} \circ \Phi_j^{-1} \left( Z_d^\RR \right)	,	
						\]
						it follows
						\[	
							(\Phi_{j+1})^{-1} \left( Z_{c}^\RR \right)  \ssubset (\Phi_j)^{-1} \left( Z_d^\RR \right)	.
						\]
		\end{enumerate}
	This completes the induction step. 
    \bigskip 

    The desired $\Phi \in \saut(X)$ that should approximate $\varphi$ on $X_\R$ is the limit $\lim_{j \to \infty} \Phi_j$. Indeed the holomorphic automorphism $\Phi_{j+1} \circ \Phi_j^{-1} = \Psi_{j,1} \circ \Xi_{j,1}$ approximates the identity on $Z_{r_j}$. Since $r_j \ge j -1$, the sublevel set $Z_{r_j}$ exhausts $X$ in the limit. Additionally we have $r_{j+1} = s_j + 1 > r_j + 2$. Choose $\{ \varepsilon_j \}_{j \in \NN} \subset \RR_{>0}$ to have finite sum such that
	\[	0 < \varepsilon_{j+1} < \mathrm{dist} ( Z_{r_{j}} , X \setminus Z_{r_{j+1}} )
	\]
	where $\mathrm{dist}(\cdot, \cdot)$ is the distance function on $\CC^{2N}$ restricted to $X$. 
	Therefore the limit 
	\[	\lim_{j \to \infty} \Phi_{j+1} \circ \Phi_j^{-1}
	\]
	exists uniformly on compact subsets on 
	\[	
        \bigcup_{j = 1}^{\infty} \left( \Phi_{j+1} \circ \Phi_j^{-1} \right)^{-1}  (Z_{r_j}) = X
	\]
	(see e.g.\@ Forstneri{\v c} \cite{MR1760722}*{Proposition 5.1}). Thus
	\[	 \lim_{j \to \infty} \Phi_{j} = (\lim_{j \to \infty} \Phi_{j+1} \circ \Phi_j^{-1} ) \circ \Phi_1
	\]
	converges to a holomorphic automorphism $\Phi$ in $\saut(X)$. 
	Moreover, $(1_j)$ and $(3_j)$ guarantee that $\psi_{j, 1} \circ \Phi_j$ is $\Phi_j$ on $Z_{r_j}^\R$. Then $(4_j)$ says $\Phi_j$ approximates $\varphi$ on $Z_{r_j}^\R$ and thus in the limit $\Phi$ approximates $\varphi$ on the entire $X_\R$.
\end{proof}

%\newpage

\subsection{The Group of Volume-Preserving Diffeomorphisms}
The goal of this subsection is to prove Theorem \ref{Global-Carleman-vol} which is Theorem \ref{Carleman-vol} in the introduction. The volume-preserving vector fields do not form a module over the functions, thus approximation of volume-preserving vector fields does not directly follow from approximation on the level of functions.  The standard  technique to overcome this difficulty is cohomological considerations as for example  in our next result.

\begin{lemma} \label{n-2-form}
    Let $M$ be a smooth manifold of dimension $n$ with $H^{n-1}(M)=0$ 
and let $\rho \colon M \to \R_{\ge 0}$ be a smooth exhaustion function whose critical points all lie in $B_r:=\{ \rho < r \}$ for some $r >0$. 

(i) Then for every closed $(n-1)$-form $\beta$ vanishing on $B_r$ and every $r' < r$ there exists an $(n-2)$-form $\alpha$ with $d \alpha = \beta$ vanishing on $B_{r'}$. 

(ii) Moreover, for every closed $(n-1)$-form $\beta$ and every $r' < r$ there exists an $(n-2)$-form $\alpha$ with $d \alpha = \beta$ such that $\| \alpha \|_{C^k(B_{r'})} \le C \| \beta \|_{C^k(B_r)}$ for some constant $C$ depending on $k$ and $r$.
\end{lemma}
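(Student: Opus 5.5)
We will use the gradient flow of $\rho$ to retract $M$ onto sublevel sets, together with the hypothesis $H^{n-1}(M)=0$.

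\emph{Part (i).} Since all critical points of $\rho$ lie in $B_r$, the normalized gradient flow of $\rho$ (for some complete Riemannian metric) identifies $M\setminus B_{r''}$ with $\partial B_{r''}\times[r'',\infty)$ for any $r''\in(r',r)$ that is a regular value. Let $\beta$ be closed and vanish on $B_r$.

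First, $\beta$ is exact on $M$ because $H^{n-1}(M)=0$. Write $\beta=d\gamma$ for some $(n-2)$-form $\gamma$. On $B_r$ we have $d\gamma=0$, so $\gamma|_{B_r}$ defines a class in $H^{n-2}(B_r)$.

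We want to subtract from $\gamma$ a closed form on $M$ that agrees with $\gamma$ near $B_{r'}$. The flow retraction gives a diffeomorphism $M\cong \overline{B_{r''}}\cup$ (collar), so $B_{r''}\hookrightarrow M$ is a homotopy equivalence. Hence the restriction $H^{n-2}(M)\to H^{n-2}(B_{r''})$ is an isomorphism, and there is a closed form $\eta$ on $M$ with $\eta|_{B_{r''}}-\gamma|_{B_{r''}}=d\mu$ for some $\mu$ on $B_{r''}$.

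Cut off $\mu$ with a function $\chi$ equal to $1$ on $B_{r'}$ and supported in $B_{r''}$, and set
\[
\alpha=\gamma-\eta+d(\chi\mu).
\]
Then $d\alpha=\beta$, and on $B_{r'}$ we have $\alpha=\gamma-\eta+d\mu=0$.

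A more explicit alternative avoids the cohomology step. Pull $\gamma|_{B_r}$ back along the flow map $F$ that retracts $M$ into $B_r$ and equals the identity on $B_{r'}$. By the homotopy formula, the closed form $F^*(\gamma|_{B_r})$ differs from $\gamma$ by an exact form $dK\beta$, where $K$ is the standard homotopy operator; the correction term is supported where the homotopy moves points through $\operatorname{supp}\beta$. Either route works; the first is cleaner.

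\emph{Part (ii).} Here $\beta$ is an arbitrary closed form, and only the estimate on $B_{r'}$ matters.

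1. The radial homotopy operator from part (i) does not apply directly, since $B_r$ need not be contractible. Instead use the homotopy equivalence $B_{r}\simeq M$. The class $[\beta|_{B_r}]$ is the restriction of $[\beta]=0$, so $\beta|_{B_r}$ is exact on $B_r$.

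2. For a bounded primitive, fix a finite good cover of $\overline{B_{r'}}$ by contractible charts inside $B_r$ and apply the Čech–de Rham (Weil) double complex. Poincaré lemma homotopy operators on star-shaped charts are bounded $C^k\to C^k$. The Čech cocycle obstructions land in the finite-dimensional space $H^{n-1}(B_{r''})=0$, and the solving maps are linear. This yields a linear primitive operator $P$ with
\[
\|P\beta\|_{C^k(B_{r'})}\le C\|\beta\|_{C^k(B_r)}.
\]

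3. A primitive defined only on $B_{r''}$ must then be extended to a global primitive without changing it on $B_{r'}$. Take any global primitive $\gamma$. The difference $P\beta-\gamma$ is closed on $B_{r''}$. Extend its cohomology class to a closed form on $M$, as in part (i), and correct by a cut-off exact term to get $\alpha$ with $d\alpha=\beta$ and $\alpha=P\beta$ on $B_{r'}$.

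The main obstacle is step 2 of part (ii): making the Čech–de Rham zig-zag bounded and linear at each stage. This needs finite-dimensional complements, which is where the compactness of $\overline{B_{r'}}$ and the finiteness of the cover are used, together with the vanishing of $H^{n-1}$ on the sublevel set, which follows from the absence of critical points outside $B_r$. The constant $C$ then depends only on $k$, $r$ and $r'$ through the cover.
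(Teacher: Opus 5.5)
\textbf{Part (i).} This is the paper's argument in cohomological dress. The paper takes a primitive $\alpha_1$ of $\beta$ and sets $\alpha=\alpha_1-\psi^*(\alpha_1|_{B_r})$, where $\psi\colon M\to B_r$ comes from the normalized gradient flow and is the identity on $\overline B_{r'}$. That is your ``explicit alternative'', with two corrections. No homotopy formula is involved. And $\gamma-\psi^*(\gamma|_{B_r})$ is not exact: it is itself the required primitive, since $\psi^*(\gamma|_{B_r})$ is closed and equals $\gamma$ on $B_{r'}$.

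\textbf{Part (ii): a genuinely different route.} The paper gets a primitive $a$ on $\overline B_r$ from the Hodge--Morrey decomposition, using the elliptic estimate $\|a\|_{W^{k+1,p}}\le C\|\beta\|_{W^{k,p}}$ and Morrey's embedding for $p>n$. It then globalizes by $\alpha=\chi a+\tilde\alpha$, with $\tilde\alpha$ obtained from (i) applied to $\beta-d(\chi a)$; your step 3 is that same globalization done by hand. Your \v{C}ech--de Rham step replaces elliptic boundary theory by explicit homotopy operators plus finite-dimensional linear algebra. Use geodesically convex sets rather than star-shaped charts, so that every intersection carries an operator bounded $C^k\to C^k$. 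This is more elementary and gives a linear operator with direct $C^k$ bounds, at the price of bookkeeping.

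The obstacle you flag does resolve. A finite-dimensional choice is needed only for the bottom cocycle of constants. Going back up with $(Lg)_I=\sum_l\phi_l g_{lI}$, one has $d(Lg)_I=\sum_l d\phi_l\wedge g_{lI}+\sum_l\phi_l\,dg_{lI}$, and $dg$ is already given by the previous row of the zig-zag, so no derivative is lost. Also, the obstruction lives in $H^{n-1}(U)$, where $U$ is the union of the cover, and this group need not vanish. What vanishes is the specific class $[\beta|_U]$, because $\beta$ is exact on $M$.

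\textbf{One real error.} It is not true that $M\setminus B_{r''}\cong\partial B_{r''}\times[r'',\infty)$ for every regular value $r''\in(r',r)$. The hypothesis only puts the critical points in $B_r$, so critical values may lie in $[r'',r)$. Since $\mathrm{Crit}(\rho)$ is closed and contained in the compact set $\{\rho\le r\}$, we have $r_0:=\max_{\mathrm{Crit}(\rho)}\rho<r$. For $r''>r_0$ the inclusion $B_{r''}\hookrightarrow M$ is a homotopy equivalence, but not in general otherwise.

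In (i) this is harmless: take $r''\in(\max(r',r_0),r)$. In step 3 of (ii) it matters, because $P\beta-\gamma$ is only defined and closed on the neighbourhood $U$ of $\overline B_{r'}$. If $r'<r_0$, the map $H^{n-2}(M)\to H^{n-2}(B_{r''})$ need not be onto. For example, take $M=\mathbb{R}^3$ with a Morse exhaustion whose sublevel sets between the index-1 and index-2 critical values are open solid tori. The class of $P\beta-\gamma$ depends on your choice of \v{C}ech correction, can be nonzero, and then cannot extend to $M$.

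The fix is to run step 2 on a cover of $\{\rho\le\tilde r\}$ inside $B_r$ with $\tilde r\in(\max(r',r_0),r)$, and restrict the resulting estimate to $B_{r'}$. The paper has the same level issue, glossed over. Its cutoff $\eta$ in (i) must also vanish near critical points in $\{r'<\rho<r\}$ for the flow field to be defined. And applying (i) with $s$ in place of $r$ in (ii) needs $s>r_0$.
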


\begin{proof}
    (i) Choose any smooth Riemannian metric on $M$ and denote by $\mathrm{grad} \rho$ the gradient of $\rho$ with respect to this metric. Let $\eta \colon M \to [0,1]$ be a smooth cutoff function with $\eta$ vanishing on $\overline{B}_{r'}$ and identically one on $\{ \rho > r \}$. Denote by $\varphi \colon [0, \infty) \times M \to M$ the flow of the vector field 
    \begin{align*}
         -\eta(x) \frac{\mathrm{grad}\rho(x)}{\| \mathrm{grad}\rho(x) \|^2}.
    \end{align*}
    The smooth map $\psi \colon M \to M$ defined by $\psi(x) = \varphi_{\rho(x) - r'}(x)$ is the identity on $\overline{B}_{r'}$ and has image contained in $B_r$. 
    
    By $H^{n-1}(M)=0$ there exists an $(n-2)$-form $\alpha_1$ with $d \alpha_1 = \beta$. The desired primitive $\alpha$ is given by $\alpha_1 - \psi^* \left(\restr{\alpha_1}{B_r}\right)$. 
    \smallskip 

    (ii) Since $H^{n-1}(M)=0$ and $\beta$ is closed, the Hodge-Morrey decomposition of differential forms on compact manifolds with boundary (see e.g.\@ Schwarz \cite{MR1367287}*{Theorem 2.4.2})
    gives us an $(n-2)$-form $a$ on $\overline{B}_r$ with $d a = \restr{\beta}{\overline{B}_r}$. By \cite{MR1367287}*{Lemma 2.4.11} the primitive $a$ can be chosen such that the desired estimate is valid with respect to Sobolev norms on $\overline{B}_r$: 
    \begin{align*}
        \| a \|_{W^{k+1,p}(\overline{B}_r)} \le C \| \beta \|_{W^{k,p}(\overline{B}_r)} \text{ for } 1 < p < \infty. 
    \end{align*}
    Together with the Sobolev-Morrey embedding for $p > n$
    % \[
    %     \| a \|_{C^{k}(\overline{B}_r)} \le \| a \|_{C^{k,1- \frac{n}{p}}(\overline{B}_r)} \le C \| a \|_{W^{k+1,p}(\overline{B}_r)}
    % \]
    and the compactness of $\overline{B}_r$ we get the desired estimate for the local forms
    \[
        \| a \|_{C^{k}(\overline{B}_r)} \le C \| a \|_{W^{k+1,p}(\overline{B}_r)} \le C \| \beta \|_{W^{k,p}(\overline{B}_r)}  \le C \| \beta \|_{C^{k}(\overline{B}_r)}.
    \]
    To obtain the global primitive, we take a cutoff function $\chi$ which is 1 on $B_{s}$ with $r'<s<r$ and 0 outside $B_r$. Then $\beta - d(\chi a)$ is closed and vanishes on $B_s$ and part (i) yields an $(n-2)$-form $\tilde{\alpha}$ vanishing on $B_{r'}$ such that $d \tilde{\alpha}=\beta - d(\chi a)$. The global form $\alpha:= \tilde{\alpha} + \chi a$ has the desired properties.
\end{proof}

\begin{theorem} \label{theorem: LocalCarleman-vol}
	Let $X_\R$ be a smooth manifold endowed with a real volume form $\Omega_\R$ with $H^{n-1}(X_\R)=0$ and $(X, \Omega, \sigma)$ a  Stein volume complexification of $(X_\R, \Omega_\R)$. Assume there exist a real holomorphic embedding $X \hookrightarrow \C^q$ and $r \ge 0$ such that the restriction of $\rho$ to $X_\R$ has only critical points in the interior of $Z^\R_r$ (from Definition \ref{sublevelset}).
    Let $\varphi \colon [0,1] \times X_\RR \to X_\RR$ be a smooth isotopy of diffeomorphisms in $\diff_{\Omega_\R}(X_\R)$  such that $\varphi_0 = \id$ and $\restr{\varphi_t}{Z_r^\RR} = \id_{Z_r^\RR}$ for all $t$ in $[0,1]$. 
    
    If  $X$ has the $\sigma$-volume density property, then for any $k \in \N$, any $\epsilon>0$, any $b \in (0,r)$ and any $a > b$, there exists a smooth isotopy of automorphisms $\Phi \colon [0,1] \times X \to X$ such that $\Phi_{t} \in \saut_\Omega(X)$ and 
	\begin{align*}
		  d_k( j^k \Phi_{t}(p) - j^k \varphi_t(p)) &< \epsilon \text{ for all } p \in Z_{a}^\R, 	\\
		d_k( j^k \Phi_{t} (p) - j^k \mathrm{id} (p) )		&< \epsilon  \text{ for all } p \in Z_{b}, \quad \text{ for all } t \in [0,1].	 %\label{ApproxOnZb}
	\end{align*}
\end{theorem}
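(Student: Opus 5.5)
The plan is to follow the proof of Theorem \ref{theorem: LocalCarleman}, with one extra requirement: every approximating vector field must be volume-preserving. Such fields do not form a module over the functions, so we cannot approximate their coefficients directly. Instead we pass to differential forms via the correspondence $V \mapsto \iota_V \Omega$. On $X_\R$, a vector field $V$ preserves $\Omega_\R$ if and only if $\iota_V \Omega_\R$ is a closed $(n-1)$-form. On $X$, a holomorphic vector field is volume-preserving if and only if $\iota_V\Omega$ is a closed holomorphic $(n-1)$-form.

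As before, we introduce the auxiliary time $s$ and write $\varphi_{st}$ as the flow of the time-dependent field $V_{s,t}$. Since $\varphi_t=\id$ on $Z_r^\R$, each $V_{s,t}$ vanishes there. The form $\beta_{s,t}=\iota_{V_{s,t}}\Omega_\R$ is closed and vanishes on $Z_r^\R$. Because $H^{n-1}(X_\R)=0$ and the critical points of $\rho|_{X_\R}$ lie in the interior of $Z^\R_r$, Lemma \ref{n-2-form}(i) applies. It yields an $(n-2)$-form $\alpha_{s,t}$ with $d\alpha_{s,t}=\beta_{s,t}$ that vanishes on $Z^\R_{b'}$ for some $b'\in(b,r)$. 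To get continuous dependence on $(s,t)$, we apply the construction of the proof (a fixed primitive operator composed with the fixed retraction $\psi$) parametrically.

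We then extend $\alpha_{s,t}$ by zero to a neighborhood of $Z_{b'}$, so it becomes a form on $S=Z^\R_{a'}\cup Z_{b'}$, where $a'$ is chosen as before. Its coefficients can be approximated in $C^{k+1}$ by holomorphic $(n-2)$-forms. This uses the holomorphic convexity of $S$ from Lemma \ref{Saturn-holoconv} together with parametric Mergelyan and Cartan--Oka--Weil, treating forms as sections of a trivializable bundle, exactly as in Theorem \ref{theorem: LocalCarleman}. The same patching argument makes the approximants polynomial in $(s,t)$, of the form $\sum s^nt^m \tilde\alpha_{nm}$, and averaging with $\overline{\sigma^*(\cdot)}$ makes them real holomorphic.

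The central point is that $d\tilde\alpha_{nm}$ is automatically closed. Hence $\widetilde W_{nm}$, defined by $\iota_{\widetilde W_{nm}}\Omega=d\tilde\alpha_{nm}$, is a global holomorphic volume-preserving vector field. It is real holomorphic because $\Omega$ is real holomorphic, and it approximates $V_{s,t}$ in $C^k$ on $S$, since $d$ costs only one derivative. This step is where the cohomological hypothesis is used, and it is the main obstacle. We must approximate the primitive rather than the field itself, and the primitive has to vanish near $Z_{b'}$ so that the approximation there is by zero. Pulling back $\Omega$ along $X_\R$ identifies $\iota_V\Omega$ restricted to $X_\R$ with $\iota_V\Omega_\R$ for real tangent fields $V$, so closeness of forms transfers to closeness of fields.

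From here the argument is verbatim. $\vf_\Omega(X)$ is closed in the compact-open topology, so the $\sigma$-volume density property and Remark \ref{replace-Lie-by-Sum}(1) approximate each $\widetilde W_{nm}$ by real linear combinations of complete, real holomorphic, volume-preserving fields. The reduction to real coefficients works exactly as in Theorem \ref{theorem: LocalCarleman}. The flows of these fields lie in $\saut_\Omega(X)$, and the composition \eqref{composition} of their flows is volume-preserving. That composition approximates the time-$1$ flows in $C^k$ on neighborhoods of $Z^\R_a$ and $Z_b$: near $Z_b$ the fields are close to $0$, so the flow there is close to the identity.
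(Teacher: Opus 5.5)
Your proposal is correct and follows the paper's proof essentially step for step. You replace $V_{s,t}$ by a primitive $\alpha_{s,t}$ of $\iota_{V_{s,t}}\Omega_\R$ that vanishes near $Z^\R_{b'}$ (using $H^{n-1}(X_\R)=0$ and Lemma \ref{n-2-form}), approximate it in $C^{k+1}$ on $S=Z^\R_{a'}\cup Z_{b'}$ by real holomorphic $(n-2)$-forms polynomial in $(s,t)$, take $d$ to get real holomorphic volume-preserving fields, and finish with the $\sigma$-volume density property and the flow-composition argument of Theorem \ref{theorem: LocalCarleman}; your extra remarks on continuity of the primitives in $(s,t)$ and on closedness of $\vf_\Omega(X)$ (needed for Remark \ref{replace-Lie-by-Sum}) only make explicit what the paper leaves implicit.
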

\begin{proof}
    We introduce an auxiliary time variable $s$
    \[
        \psi \colon [0,1] \times [0,1] \times X_\R \to X_\R, (s, t, p) \mapsto \varphi(st, p)
    \]
    and consider $\varphi_t$ as the time-1 map of the time-$s$ dependent volume-preserving vector field 
    \[
        V \colon [0,1] \times [0,1] \times X_\R \to T X_\R 
    \]
    defined by 
	\begin{align*} %\label{equation: V_t}
		\frac{d } {d s} \varphi_{st} (p) = V (s,t, \varphi_{st}(p)) = V_{s,t}(\varphi_{st}(p)) , \quad s, t \in [0,1], \, p \in X_\R,		
	\end{align*}
    with the initial condition $\psi(0,t,p) = \varphi(0,p) = p$. 
    Since $\restr{\varphi_t}{Z_r^\RR} = \id_{Z_r^\RR}$ for $t \in [0,1]$, we can extend $V_{s,t}$ to be 0 in an open neighborhood of $Z_{b'}$ for $s, t \in [0,1]$, where $b' \in (b,r)$. 
    Choose $a'>0$ such that 
    \begin{align*}
        \bigcup_{ t \in [0,1] }  \varphi_t ( Z_{a}^\RR ) \subset Z_{a'}^\RR. 
    \end{align*}
    Since $H^{n-1}(X_\R) = 0$, there exists a smooth family of $(n-2)$-forms $\alpha_{s,t}$ on $X_\R$ such that $i_{V_{s,t}} \Omega_\R = d \alpha_{s,t}$. 
    By Lemma \ref{n-2-form} we can assume there is a $r' \in (b', r)$ such that $\alpha_{s,t}$ is zero on $Z_{r'}^\R$.

    Let $Q$ be a compact Hausdorff space whose points we denote by $q$. Given a continuously $q$-dependent function $h_q$ on $S= Z^\R_{a'} \cup Z_{b'}$ which is in $C^{k+1}(S) \cap \holo(Z_{b'})$, by the holomorphic convexity of $S$ in $X$ (Lemma \ref{Saturn-holoconv}) and the parametric version of the Mergelyan theorem by Forn{\ae}ss--Forstneri{\v c}--Wold \cite{MR4264040}*{Theorem 20}, $h_q$ is approximable in the $C^{k+1}$-norm by a continuously $q$-dependent function $P_q$ which is holomorphic in a neighborhood $U_S$ of $S$. 
    The same is true for sections in the holomorphic vector  bundle $\wedge^{n-2} T^* X$ since any holomorphic vector bundle on a Stein space is stably trivial, for details see e.g.\@ Forstneri{\v c} \cite{MR2975791}*{p.\@ 64} and Giraldo--S{\'a}nchez-Arellano \cite{MR4904178}*{Theorem 4.9}. Hence $\alpha_{s,t}$ can be approximated in the $C^{k+1}$-norm on $S$ by a continuously $(s,t)$-dependent holomorphic $(n-2)$-form $A_{s,t}$ for $s,t \in [0,1]$.
    Moreover, we can assume that $A(s,t,p)$ is a finite sum of the form $\sum_{n,m} s^n t^m A_{nm}(p)$ by a patching argument as in the proof of \cite{MR1314745}*{Lemma 1.2}. 

    Then by 
    the Cartan--Oka--Weil theorem, we can approximate $A_{nm}$ on a relative compact Runge open neighborhood of $S$ by a holomorphic $(n-2)$-form $\widetilde{A}_{nm}$ on $X$. Since both $A_{nm}$ and $\widetilde{A}_{nm}$ are holomorphic in a neighborhood of $S$, the approximation holds in the $C^{k+1}$-norm.

    Since $\alpha_{s,t}$ is a real $(n-2)$-form on $X_\R$, we can replace $\widetilde{A}_{nm}$ by $(\widetilde{A}_{nm} + \overline{\sigma_* \widetilde{A}_{nm}})/2$ such that 
    \[
        \widetilde{A}_{s,t}(p) = \widetilde{A}(s,t,p) = \sum_{n,m} s^n t^m \widetilde{A}_{nm}(p)
    \]
    is real holomorphic with respect to $\sigma$ and approximates $\alpha_{s,t}$. Denote by $\widetilde{W}_{nm}$ the real holomorphic volume-preserving vector field corresponding to $\widetilde{A}_{nm}$, namely $i_{\widetilde{W}_{nm}} \Omega = d \widetilde{A}_{nm}$. 

    By the $\sigma$-density property and Remark \ref{replace-Lie-by-Sum}, $\widetilde{W}_{nm}$ can be approximated on $U_S$ by a complex linear combination $L$ of complete real holomorphic volume-preserving vector fields. 
    Since the real flows of real holomorphic vector fields preserve $X_\R$, we may assume that the coefficients in this linear combination are real. Indeed we can separate the coefficients of $L$ and thus also $L$ into real and imaginary parts $\widetilde{W}_{nm} \approx L_1 + i L_2$. Acting on both sides with $\overline{\sigma_*(\cdot)}$, we get $\widetilde{W}_{nm} \approx L_1 - i L_2$ since $\widetilde{W}_{nm}$ and $L_1, L_2$ are real holomorphic. Thus $L_2 \rvert_{U_S} \approx 0$ and $L_1$ approximates $\widetilde{W}_{nm}$ on $U_S$. Hence $\widetilde{W}_{s,t}$ can be approximated on $U_S$ (thus $V_{s,t}$ on $S$) by a finite sum $\sum_{l=1}^N p_l(s,t) \widetilde{W}_l$ where $p_l(s,t)$ is a polynomial and $\widetilde{W}_l$ is a complete real holomorphic volume-preserving vector field on $X$.

    By definition for all $t \in [0,1]$ the vector field  $V_{s,t}$ is integrable for time $s \in [0,1]$ for all starting points $x \in Z_a^\R \cup Z_b$ and its flow is contained in $S = Z^\R_{a'} \cup Z_{b'} \subset U_S$, thus there is an open neighborhood $U$ containing $Z_a^\R \cup Z_b$ such that $\sum_{l=1}^N p_l(s,t) \widetilde{W}_l$ is integrable for time $s \in [0,1]$ for all initial points $x$ in $U$. 

    For a fixed $n \in \NN$, $j \in \{0, 1, \dots, n-1\}$, we denote by $\Phi^{(s, l)}_\theta$ the time-$\theta$ flow map of the time-$s$ independent vector field $ p_l(s,t) \widetilde{W}_l$, $l \in \{ 1, \dots, N\}$ and consider the composition
    \begin{align*} %\label{algorithm}
        \Phi^{(s)}_{\theta} =  \Phi^{(s, 1)}_{\theta} \circ \Phi^{(s, 2)}_{\theta} \circ \cdots \circ \Phi^{(s,N)}_{\theta} .
    \end{align*}
    Now we take 
    \begin{align} \label{composition-vol}
		 \Phi^{(\frac{n-1}{n})}_{1/n} \circ  \Phi^{(\frac{n-2}{n})}_{1/n}  \circ \cdots \circ  \Phi^{(\frac{1}{n})}_{1/n} \circ  \Phi^{(0)}_{1/n} 
	\end{align}
    which approximates the time-$1$ flow of $\sum_{l=1}^N p_l(s,t) \widetilde{W}_l$ uniformly on a relatively compact open neighborhood of any compact subset in $U$ for large enough $n$ \cite{MR0515141}*{Theorem 2.1.26}. Indeed an algorithm for the vector field $\left( 1, \sum_{l=1}^N p_l(s,t) \widetilde{W}_l \right) $ on $[0,1] \times ([0,1]\times X)$ is given by 
    \begin{align*}
        [0, \epsilon) \times [0,1]^2 \times X \to  [0,1]^2 \times X, \quad (\theta, s, t, p) \mapsto (s + \theta, t, \Phi^{(s)}_\theta(p))  
    \end{align*}
    whose $n$th iterate after projecting to $X$ is \eqref{composition-vol}. 
    By the Cauchy estimate, the approximation is in the $C^k$-norm and \eqref{composition-vol} depends smoothly on $t$. 
    
    Finally, since approximation of $V_{s,t}$ by the vector field $\sum_{l=1}^N p_l(s,t) \widetilde{W}_l$ implies the approximation of $\varphi_t$ by the time-1 flow of $\sum_{l=1}^N p_l(s,t) \widetilde{W}_l$, it follows that \eqref{composition-vol} approximates $\varphi_t$ on a relatively compact open neighborhood of $Z^\R_a \cap Z_b$. 
\end{proof}

\begin{remark} \label{Local-Carleman-w/o-interpolation-vol}
    The same proof also shows that for any $a \ge 0$ and any smooth isotopy of diffeomorphisms $\varphi \colon [0,1] \times X_\RR \to X_\RR$ in $\diff_{\Omega_\R}(X_\R)$, the cohomological condition $H^{n-1}(X_\R)=0$ and the $\sigma$-volume density property of $X$ imply the existence of a smooth isotopy of automorphisms $\Phi \colon [0,1] \times X \to X$ such that $\Phi_{t} \in \saut_\Omega(X)$ and $d_k( j^k \Phi_{t}(p) - j^k \varphi_t(p)) < \epsilon$ for all $p \in Z_{a}^\R$ and $t \in [0,1]$.
\end{remark}

Next comes the global approximation theorem for volume-preserving diffeomorphisms. It is based on a similar induction scheme as for diffeomorphisms and the main variation is to glue $(n-2)$-forms in place of vector fields.

\begin{theorem} \label{Global-Carleman-vol}
    Let $X_\R$ be a smooth manifold endowed with a real volume form $\Omega_\R$ with $H^{n-1}(X_\R)=0$ and $(X, \Omega, \sigma)$ a Stein volume complexification of $(X_\R, \Omega_\R)$. Assume there exist a real holomorphic embedding $X \hookrightarrow \C^q$ and $r \ge 0$ such that the restriction of $\rho$ to $X_\R$ has only critical points in the interior of $Z^\R_r$ (from Definition \ref{sublevelset}).
    
    If $X$ has the $\sigma$-volume density property, then for every volume-preserving diffeomorphism $\varphi \in \diff_{\Omega_\R,0}(X_\R)$ smoothly isotopic to $\id$, for any $k \in \N$ and any positive continuous function $\epsilon$ on $X_\R$, there exists a holomorphic automorphism $\Phi \in \saut_\Omega(X)$ such that
	\[	
        d_k( j^k \Phi (p) - j^k \varphi(p) ) < \epsilon (p) \quad \text{ for all }\, p \in X_\R.	
    \]
\end{theorem}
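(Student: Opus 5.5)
We will run the same inductive scheme as in the proof of Theorem \ref{Global-Carleman}. There are two changes. The local tool becomes Theorem \ref{theorem: LocalCarleman-vol} together with Remark \ref{Local-Carleman-w/o-interpolation-vol}. Every cut-off of an isotopy is now performed on primitives rather than on vector fields.

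First fix a volume-preserving isotopy $\varphi_t$ from $\id$ to $\varphi$. We look for $\Phi_j \in \saut_\Omega(X)$, volume-preserving isotopies $\psi_{j,t}$ of $X_\R$ with $\psi_{j,0}=\id$, and radii $r_j, s_j$ satisfying $(1_j)$--$(4_j)$ verbatim. The radii are now also required to exceed the $r$ of the hypothesis, so that all critical points of $\rho|_{X_\R}$ lie inside the relevant sublevel sets.

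\emph{Induction base.} Remark \ref{Local-Carleman-w/o-interpolation-vol} gives $A_t \in \saut_\Omega(X)$ approximating $\varphi_t$ on a large compact set, and we put $\Phi_1=A_1$. The isotopy $\varphi_t\circ A_t^{-1}|_{X_\R}$ is volume-preserving. Its generator $W_t$ is therefore divergence-free, so $i_{W_t}\Omega_\R$ is closed, and since $H^{n-1}(X_\R)=0$ it equals $d\beta_t$ for a smooth family of $(n-2)$-forms $\beta_t$.

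The gluing differs from the earlier proof here. Instead of $\gamma W_t$ we take the vector field $\widehat W_t$ defined by $i_{\widehat W_t}\Omega_\R = d(\gamma\beta_t)$, which is again divergence-free. We need $\widehat W_t$ to vanish on $Z^\R_T$, to equal $W_t$ outside $Z^\R_{T+1}$, and to be $C^k$-small in between. Vanishing and agreement are immediate from the choice of cutoff $\gamma$. Smallness uses that $W_t$ is $C^k$-small on $Z^\R_{T+2}$, because $A_t\approx\varphi_t$ there. We then choose $\beta_t$ via Lemma \ref{n-2-form}(ii), which gives $\|\beta_t\|_{C^{k}(Z^\R_{T+1})}\le C\|W_t\|_{C^k(Z^\R_{T+2})}$. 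This needs $T$ beyond $r$, so that the level sets are regular and the constant $C$ is controlled. It follows that $d(\gamma\beta_t)=d\gamma\wedge\beta_t+\gamma\,d\beta_t$ is $C^{k}$-small, with the loss of one derivative absorbed by working with $C^{k+1}$ estimates from the start. The flow $\psi_{1,t}$ of $\widehat W_t$ then plays the role of the old $\psi_{1,t}$, and $(1_1)$--$(4_1)$ are checked by the same three-case argument.

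\emph{Induction step.} The induction step copies the old one. Theorem \ref{theorem: LocalCarleman-vol} produces $\Psi_{j,t}$ and later $\Xi_{j,t}$ in $\saut_\Omega(X)$. Its hypothesis that the isotopy is the identity on a sublevel set $Z^\R_{r}$ containing the critical points is supplied by $(3_j)$ for $\psi_{j,t}$, and by construction for $\tilde\xi_{j,t}$, once the radii exceed the critical radius. The two cut-offs, with $\chi$ producing $\tilde\xi_{j,t}$ and $\lambda$ producing $\psi_{j+1,t}$, are both done on primitives exactly as above. Lemma \ref{n-2-form}(ii) controls the transition annuli, and part (i) ensures that the primitive can be taken to vanish where the isotopy is already the identity. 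The convergence argument for $\Phi=\lim\Phi_j$ is unchanged. The limit of $\Omega$-preserving $\sigma$-equivariant automorphisms is again in $\saut_\Omega(X)$, since the convergence is locally uniform.

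\emph{Main obstacle.} The main obstacle is the quantitative cut-off. We must show that a divergence-free field which is $C^k$-small on an annulus admits a primitive that is $C^{k}$-small on a slightly smaller annulus, uniformly in $t$. Uniformity in $t$ is needed so that the interpolated isotopy stays close to the identity in $C^k$. I would first obtain this from Lemma \ref{n-2-form}(ii) applied on the compact sublevel sets, replacing $k$ by $k+1$. Smooth dependence on parameters follows because the Hodge--Morrey primitive is given by a bounded linear operator.
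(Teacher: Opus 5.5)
Your proposal is correct and essentially reproduces the paper's proof. The paper uses the same induction with $(1_j)$--$(4_j)$ and the same local tools, Theorem \ref{theorem: LocalCarleman-vol} and Remark \ref{Local-Carleman-w/o-interpolation-vol}; it keeps $s_j \ge r$ so that these and Lemma \ref{n-2-form} apply, and it performs all three cut-offs ($\gamma$, $\chi$, $\lambda$) on $(n-2)$-primitives chosen small on the transition annuli by Lemma \ref{n-2-form}, which you make more explicit via part (ii). The only difference is cosmetic: the paper re-chooses $p_0\in\R^q\setminus X_\R$ and $r_1$ so that $Z_{r_1}=Z^\R_{r_1}$, whereas your verbatim base with $r_1=0$ also works, since for $p_0\notin X_\R$ one has $Z_0=\emptyset$ and $(1_1)$ is vacuous.
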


\begin{proof}
    By assumption there exists a smooth isotopy $\varphi_t$ in $\diff_{\Omega_\R}(X_\R), t \in [0,1]$ with $\varphi_0 = \mathrm{id}$ and $\varphi_1= \varphi$. 
	%\smallskip
    
	Given any natural number $j$ and any positive $\varepsilon_j$, assume that we have a holomorphic automorphism $\Phi_j \in \saut_\Omega(X)$, a smooth isotopy $\psi_{j,t}$ of volume-preserving diffeomorphisms of $X_\R$ with $\psi_{j,0}=\id$, real numbers $r_j, s_j$ with $r_j \ge j-1, s_j \ge r_j + 1$ such that
	\smallskip
	\begin{enumerate}
		\item[($1_j$)] 	The image of $ Z_{r_j} $ under $\Phi_j$ is contained in $Z_{s_j}$. 
		\item[($2_j$)]	For $j \ge 2$ 
					\[	 d_k (j^k \Phi_j (p) - j^k \Phi_{j-1}(p) ) < \varepsilon_j \quad  \forall \, p \in Z_{r_{j-1}}.
                    \] %, \quad  \| \Phi^{-1}_j - \Phi^{-1}_{j-1} \|_{C^k(Z_{s_{j-1}})} < \varepsilon_j		\]
		\item[($3_j$)]	On $Z^\R_{s_j }$ we have $\psi_{j,t}$ is the identity for $t$ in $[0,1]$. %\textcolor{red}{why $\varepsilon_j$?}
		\item[($4_j$)]	For all $p$ in $X_\R$
					\[	d_k (j^k (\psi_{j, 1} \circ \Phi_j) (p) - j^k \varphi (p) ) < \epsilon (p).
                    \]
	\end{enumerate}

    \textbf{Induction base}: Choose $p_0 \in \R^q \setminus X_\R$ so that $\mathrm{dist}_{\C^q}(p_0, X_\R) < \mathrm{dist}_{\C^q}(p_0, X \setminus X_\R)$. For $j = 1$ take \textcolor{black}{$r_1 = \mathrm{dist}_{\C^q}(p_0, X_\R)$}. Then $Z_{r_1} = Z_{r_1}^\R$. 
	Choose \textcolor{black}{$s_1$} so that 
	\begin{align} \label{equation: S_1-vol}
		\varphi_1 \left( Z_{r_1+1}^\RR \right) \ssubset Z_{s_1}^\RR, \quad s_1 \ge r .
	\end{align}
	Choose $T > s_1 $. Then by \eqref{equation: S_1-vol}
	\begin{align}
		Z_{r_1}^\RR \subset Z_{r_1+1}^\RR  \ssubset (\varphi_1)^{-1} \left( Z_{s_1}^\RR \right) \subset (\varphi_1)^{-1} \left( Z_{T}^\RR \right).	\label{equation: R_1-vol}	
	\end{align}
	Using Remark \ref{Local-Carleman-w/o-interpolation-vol} we get a smooth isotopy $A_t \in \saut_\Omega(X)$ approximating $\varphi_t$ on the compact subset 
	\[
		\bigcup_{ t \in [0,1]} (\varphi_t)^{-1} \left( Z_{T + 3 }^\RR \right)
	\]
	of $X_\RR$. Since $A_t$ approximates $\varphi_t$ on this compact, we have 
	\begin{align}
		(A_t)^{-1} \left( Z_{T + 2}^\RR \right) 	\ssubset  	(\varphi_t)^{-1} \left( Z_{T + 3}^\RR \right).  \label{equation: r_3-vol}
	\end{align}
	In particular, $A_1$ approximates $\varphi_1$ on $A_1^{-1} \left( Z_{T + 2}^\RR \right) $, hence $\varphi_t \circ \restr{A_t^{-1}}{X_\RR}$ is close to the identity on $Z_{T + 2}^\RR$. Choose \textcolor{black}{$\Phi_1 = A_1$}.
	
	%\smallskip
	To construct an isotopy $\psi_{1,t}$ we interpolate between the identity on $Z_{T}^\RR $ and $\varphi_t \circ \restr{A_t^{-1}}{X_\RR}$ outside $Z_{T + 2}^\RR$. More precisely, let $W_t$ be the infinitesimal generator of the isotopy $\varphi_t \circ \restr{A_t^{-1}}{X_\RR}$. By $H^{n-1}(X_\R) = 0$ there exists an $(n-2)$-form $\alpha_t$ such that $i_{W_t} \Omega_\R = d \alpha_t$. Using $T > s_1 > r$ and Lemma \ref{n-2-form} we can choose $\alpha_t$ with $\restr{\alpha_t}{Z_{T+1}^\RR}$ close to zero. 
    Let $\gamma$ be a cutoff function which is zero on $Z_{T}^\RR$ and one outside $ Z_{T + 1}^\RR $.
	Take \textcolor{black}{$\psi_{1,t}$} to be the time-$t$ flow of the volume-preserving vector field corresponding to the $(n-1)$-form $d (\gamma \alpha_t)$. 
    \medskip 
	
	By the choice of $T$ we have that $Z_{s_1}^\RR$ is contained in $Z_{T}^\RR$, which implies that $\psi_{1,t}$ is the identity on $Z_{s_1}^\RR$ for all $t$ in $[0,1]$. This shows $(3_1)$. 
	\smallskip 
	
	To see that $(4_1)$ is satisfied, let $p \in X_\RR$ and consider separately
	\begin{enumerate}[label=(\roman*)]
		\item		$p \in A_1^{-1} \left( Z_{T}^\RR \right)$: $\psi_{1,1}$ is the identity at $A_1 (p)$ and $A_1$ approximates $\varphi_1$ by \eqref{equation: r_3-vol}. 
		\smallskip
		\item		$p \notin A_1^{-1} \left( Z_{T + 2}^\RR \right)$: $\psi_{1,1}$ is equal to $\varphi_1 \circ A_1^{-1}$ at $A_1 (p)$ by the choice of $\gamma$.
		\smallskip
		\item		$p \in A_1^{-1} \left( Z_{T + 2}^\RR \setminus Z_{T}^\RR \right)$: $A_1$ approximates $\varphi_1$ by \eqref{equation: r_3-vol} and $\psi_{1,1} \circ \varphi_1$ is the interpolation between $\mathrm{id} \circ \varphi_1$ 				and $\varphi_1 \circ A_1^{-1} \circ \varphi_1$. Here $\varphi_1 \sim A_1$ implies $A_1^{-1} \circ \varphi_1 \sim \mathrm{id}$.  
	\end{enumerate}
	
	Last, let us check that $(1_1)$ holds. By the choice of $A_t$, we may assume that there exists a small positive $\delta$ which is less than one, such that 
	\[
		\Phi_1 \left( Z^\RR_{r_1} \right) \subset 	\varphi_1 \left( Z_{r_1+ \delta}^\RR \right) \subset \varphi_1 \left( Z_{r_1+ 1}^\RR \right) \ssubset Z_{s_1}^\RR .
	\]
	The first inclusion follows from the fact that $\Phi_1 = A_1$ approximates $\varphi_1$ by \eqref{equation: R_1-vol} and the last inclusion is due to the choice of $s_1$ in $\eqref{equation: S_1}$. This concludes the induction base. 
    \bigskip

    \textbf{Induction step}: Take \textcolor{black}{$r_{j+1} = s_j + 1$} and choose $a > \max \{ r_j + 1, r_{j+1} \}$ such that
	\begin{align*}
		%\bigcup_{ t \in [0,1]} \psi_{j,t} (Z^\RR_{R_{j+1}}) &\ssubset Z^\RR_{a} 	\nonumber	\\
		%\bigcup_{ t \in [0,1]}  (\psi_{j,t})^{-1}( Z^\RR_{R_{j+1}}) &\ssubset  Z^\RR_{a} 	\nonumber \\
		\Phi_j ( Z_{r_{j+1}} ) \ssubset	Z_a. %\label{equation: settinga-vol}
	\end{align*}	
	Since $s_j \ge r$, by Theorem \ref{theorem: LocalCarleman-vol} there exists a smooth isotopy $\Psi_{j,t}$ in $\saut_\Omega(X)$, which approximates the identity near $Z_{s_j}$ and $\psi_{j,t}$ near $Z^\RR_{a + 2}$.  % Using $R = S_j + \varepsilon_j$, $a = a +2$, $b=S_j$
	Thus %the symplectic diffeomorphism
	\[	
		\xi_{j,t} = ( \restr{\Psi_{j,t}}{X_\RR} )^{-1} \circ \psi_{j,t}	
	\]
	approximates the identity on $Z_{a+2}^\RR$. 
	
	%\smallskip
	Moreover take a cutoff function $\chi$ on $X_\RR$ such that it is zero on $Z^\RR_a$ and equal to one outside $Z_{a+1}^\RR$. Let $V_t$ be the infinitesimal generator of the smooth isotopy $\xi_{j,t}$. By $H^{n-1}(X_\R)= 0$ there is an $(n-2)$-form $\beta_t$ such that $i_{V_t}\Omega_\R = d \beta_t$. Again by Lemma \ref{n-2-form} the primitive $\beta_t$ can be chosen to be close to zero on $Z^\R_{a+1}$. 
    Let $\tilde{\xi}_{j,t}$ be the flow map of the volume-preserving vector field corresponding to the $(n-1)$-form $ d (\chi \beta_t)$. Then $\tilde{\xi}_{j,t}$ is the identity on $Z^\RR_a$, close to the identity on $Z^\RR_{a+2}$, and equal to $\xi_{j,t}$ outside $Z^\RR_{a+2}$. Indeed on $Z^\R_{a+2} \setminus Z^\R_a$ we have $d(\chi \beta_t) = \chi d \beta_t + d \chi \wedge \beta_t$ close to zero by Lemma \ref{n-2-form}.

\medskip

	%\smallskip
	Then we have on $X_\RR$
	\begin{align}
		\Psi_{j,t} \circ \tilde{\xi}_{j,t}  &\sim  \psi_{j,t},  \label{equation: psi_jt-vol} \\ 
		(\Psi_{j,t} \circ \tilde{\xi}_{j,t})^{-1} &\sim (\psi_{j,t})^{-1},  \nonumber
	\end{align}	
	by the choices of $\Psi_{j,t}, \xi_{j,t}, \tilde{\xi}_{j,t}$.
	Next, choose \textcolor{black}{$s_{j+1} > a $} so that 
	\begin{align*} %\label{equation: Sj+1FromPsi-vol}
		\Psi_{j,1} ( Z_a ) \ssubset Z_{s_{j+1}}. 	
	\end{align*}
	Set $b = s_{j+1} + 1$ and pick $c$ and $d$ so that
	\begin{align} \label{equation: settingc-vol}
		Z_{b+2}^\RR &\ssubset \varphi \left( Z_c^\RR \right), \\
		\Phi_j   \left(  Z_c^\RR \right) &\ssubset Z_d^\RR, \label{equation: settingd1-vol} \\
		Z_{c}^\RR &\ssubset \psi_{j,1} \left( Z_d^\RR \right). \nonumber %\label{equation: settingd2}
	\end{align}
	
	Next, apply Theorem \ref{theorem: LocalCarleman-vol} to obtain an isotopy $\Xi_{j,t} \in \saut_\Omega(X)$, which approximate $\tilde{\xi}_{j,t}$ on $Z^\RR_{d}$ and the identity on $Z_a$. Set \textcolor{black}{$\Phi_{j+1} = \Psi_{j,1} \circ \Xi_{j,1} \circ \Phi_j$}. The above choices of $c$ and $d$ allow us to approximate on $Z_c^\RR$
	\begin{align*}
		\varphi \sim \psi_{j,1} \circ \Phi_j \sim \Psi_{j,1} \circ \tilde{\xi}_{j,1} \circ \Phi_j \sim \Psi_{j,1} \circ \Xi_{j,1} \circ \Phi_j  = \Phi_{j+1},
	\end{align*}
	where the first approximation comes from $(4_j)$, the second by \eqref{equation: psi_jt-vol}, and the third due to \eqref{equation: settingd1-vol}. Combining this with \eqref{equation: settingc-vol} we have
	\begin{align*} %\label{equation: lambda=one-vol}
		Z_{b+2}^\RR  \ssubset \Phi_{j+1} \left( Z_{c}^\RR \right).
	\end{align*}
	Furthermore consider the isotopy
	\[	
		\hat{\xi}_{j,t} = \psi_{j,t} \circ  \restr {(\Psi_{j,t} \circ \Xi_{j,t} )^{-1} }{X_\RR} 	
	\]
	and its time derivative $\hat{V}_t$. Let $\hat{\beta}_t$ be an $(n-2)$-form such that $i_{\hat{V}_t} \Omega_\R = d \hat{\beta}_t$ and $\hat{\beta}_t$ is close to zero on $Z^\R_{b+1}$. Moreover let $\lambda$ be a cutoff function on $X_\RR$ such that it is zero on $Z^\RR_{b}$ and equal to one outside $Z^\RR_{b+1}$. Finally let \textcolor{black}{$\psi_{j+1, t}$} denote the flow map of the volume-preserving vector field corresponding to $d(\lambda \hat{\beta}_t)$. 
    \medskip

    The rest of the proof is identical to the proof of Theorem \ref{Global-Carleman}.
\end{proof}

\subsection{The Group of Hamiltonian Diffeomorphisms} \label{sec: Ham-approximation}
The aim of this subsection is to prove Theorem \ref{Global-Carleman-ham} which is Theorem \ref{Carleman-ham} in the Introduction. The proof is similar to Theorem \ref{Global-Carleman} and we indicate only the differences. 

Let $(X, \omega)$ be a smooth symplectic manifold. A {symplectic isotopy} is a jointly smooth map $\phi \colon [0, 1] \times X \to X$ such that $\phi_t$ is symplectic for every $t$ in $[0,1]$ and $\phi_0 = \id$. The isotopy $\phi_t$ is generated by a smooth family of vector fields $V_t$ with $d \phi_t / dt= V_t \circ \phi_t, \, \phi_0 = \id$. Since $\phi_t$ is symplectic, the vector field $V_t$ is symplectic and the 1-form $i_{V_t} \omega$ is closed for $t \in [0,1]$.  A {\bf Hamiltonian isotopy} is a symplectic isotopy such that the closed 1-form $i_{V_t} \omega$ is exact, namely $V_t$ is Hamiltonian, for all $t \in [0,1]$. In this case there exists a smooth family of Hamiltonian functions $H_t$ such that $i_{V_t}\omega = d H_t$.
We call a symplectomorphism which is the endpoint of a Hamiltonian isotopy a \textbf{Hamiltonian diffeomorphism}. Hamiltonian diffeomorphisms form a normal subgroup of the group of symplectic diffeomorphisms, see Dusa McDuff and Dietmar Salamon \cite{MR3674984}*{\S 3.1}.

\begin{theorem} \label{theorem: LocalCarleman-ham}
	Let $X_\R$ be a smooth manifold endowed with a real symplectic form $\omega_\R$ and $(X, \omega, \sigma)$ a symplectic complexification of $(X_\R, \omega_\R)$ which is Stein. Let $\varphi \colon [0,1] \times X_\RR \to X_\RR$ be a Hamiltonian isotopy and $r \ge 0$ such that $\restr{\varphi_t}{Z_r^\RR} = \id_{Z_r^\RR}$ for all $t$ in $[0,1]$. 
    
    If $X$ has the $\sigma$-Hamiltonian density property, then for any $k \in \N$, any $\epsilon>0$, any $b \in (0,r)$ and any $a > b$, there exists a Hamiltonian isotopy of holomorphic automorphisms $\Phi \colon [0,1] \times X \to X$ such that $\Phi_{t} \in \saut_\omega(X)$ and 
	\begin{align*}
		  d_k( j^k \Phi_{t}(p) - j^k \varphi_t(p)) &< \epsilon \text{ for all } p \in Z_{a}^\R, 	\\
		d_k( j^k \Phi_{t} (p) - j^k \mathrm{id} (p) )		&< \epsilon  \text{ for all } p \in Z_{b}, \quad \text{ for all } t \in [0,1].	 %\label{ApproxOnZb}
	\end{align*}
\end{theorem}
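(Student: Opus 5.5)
The plan is to follow the proof of Theorem \ref{theorem: LocalCarleman} almost verbatim, but to approximate Hamiltonian functions instead of vector fields. This works because functions do form a module, unlike Hamiltonian vector fields.

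First I introduce the auxiliary variable $s$ and write $\varphi_{st}$ as the time-$s$ flow of a Hamiltonian field $V_{s,t}$ with $i_{V_{s,t}}\omega_\R = dH_{s,t}$, where $H_{s,t}$ is a smooth family of real functions on $X_\R$. Since $\varphi_t$ is the identity on $Z_r^\R$, the field $V_{s,t}$ vanishes on a neighborhood of $Z_{b'}^\R$ for some $b'\in(b,r)$. Hence $H_{s,t}$ is locally constant there, and after subtracting constants I normalize $H_{s,t}\equiv 0$ near $Z^\R_{b'}$. If $Z_{b'}^\R$ is disconnected, I handle each component separately; the constant adjustments are harmless because only $dH$ matters. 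I then extend $H_{s,t}$ by $0$ to a neighborhood of $Z_{b'}$ in $X$ and choose $a'$ with $\bigcup_t\varphi_t(Z_a^\R)\subset Z_{a'}^\R$.

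Next I treat $H_{s,t}$ on $S=Z^\R_{a'}\cup Z_{b'}$ as a function in $C^{k+1}(S)\cap\holo(Z_{b'})$. By Lemma \ref{Saturn-holoconv} and the parametric Mergelyan theorem I approximate it in $C^{k+1}$ by functions holomorphic near $S$. The patching argument makes them polynomial in $(s,t)$, namely $\sum s^nt^mH_{nm}$. Cartan--Oka--Weil then gives global approximants $\widetilde H_{nm}\in\holo(X)$, and I symmetrize them to $(\widetilde H_{nm}+\overline{\widetilde H_{nm}\circ\sigma})/2$ so they are real holomorphic. Because $\omega$ is real holomorphic and $\jmath^*\omega=\omega_\R$, the Hamiltonian field of a real holomorphic function is real holomorphic, and on $X_\R$ it restricts to the real Hamiltonian field of the restricted function. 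So $C^{k+1}$-approximation of $H$ yields $C^k$-approximation of $V_{s,t}$ on $S$.

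By Remark \ref{replace-Lie-by-Sum}(2) and the $\sigma$-Hamiltonian density property, each $\widetilde H_{nm}$ is approximated modulo constants, uniformly on a neighborhood $\Omega$ of $S$ (hence in $C^{k+1}$ on $S$ by Cauchy estimates), by a linear combination of real holomorphic functions with complete Hamiltonian fields. The argument with $\overline{\sigma^*(\cdot)}$ from Theorem \ref{theorem: LocalCarleman} shows the coefficients can be taken real. This gives $\sum_l p_l(s,t)\widetilde H_l$ with real polynomials $p_l$. Its Hamiltonian field $\sum_l p_l(s,t)\widetilde W_l$ approximates $V_{s,t}$ on $S$, and the flows stay in $\Omega$ for initial points near $Z_a^\R\cup Z_b$.

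Finally I use the splitting composition \eqref{composition}. Each factor is a real-time flow of a complete real holomorphic Hamiltonian field, so it lies in $\saut_\omega(X)$. For fixed $n$ the composition depends smoothly on $t$. The path from the identity obtained by running each factor's flow sequentially is a Hamiltonian isotopy, since it is a concatenation of autonomous Hamiltonian flows, reparametrized smoothly and stationary at the endpoints. Convergence in $C^k$ follows from Abraham--Marsden together with Cauchy estimates.

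The main obstacle I foresee is the normalization near $Z_{b'}$. I need $H_{s,t}$ to vanish there, and not merely be locally constant, so that the approximation also makes $\Phi_t$ close to the identity on $Z_b$. I also need the phrase "Hamiltonian isotopy of holomorphic automorphisms" to be interpreted compatibly with the product-of-flows construction. Both seem manageable: the first by subtracting the local constants componentwise, and the second by the reparametrization just described.
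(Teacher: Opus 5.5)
Your outline is the paper's proof. The paper only says to rerun the proof of Theorem~\ref{theorem: LocalCarleman} with Hamiltonian functions in place of vector fields, and your steps are exactly that transcription: the auxiliary variable $s$, parametric Mergelyan on $S=Z^\R_{a'}\cup Z_{b'}$, polynomial dependence on $(s,t)$, Oka--Weil, $\sigma$-symmetrization, density modulo constants with real coefficients, and the splitting composition. The step that fails as written is the one you flagged yourself: the normalization of $H_{s,t}$ near $Z_{b'}$.

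\textbf{Why the componentwise normalization fails.} You may only subtract from $H_{s,t}$ something that is locally constant on $Z^\R_{a'}$. Suppose two components $K_1,K_2$ of $Z^\R_{b'}$ lie in one component of $Z^\R_{a'}$ (e.g.\ $X_\R$ connected and $a$ large), and $H_{s,t}$ equals $c_1\neq c_2$ near them. Then forcing $H_{s,t}\equiv 0$ near both changes $dH_{s,t}$ on $Z^\R_{a'}\setminus Z^\R_{b'}$ and ruins the approximation of $V_{s,t}$ there. So ``only $dH$ matters'' argues against your fix, not for it.

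\textbf{What is actually needed.} Vanishing is not required. For $\Phi_t\approx\id$ on $Z_b$ it suffices that the function on $S$ be locally constant on the \emph{complex} set $Z_{b'}$. The correct move is to keep $H_{s,t}$ unchanged on $Z^\R_{a'}$ and extend it to each component $C$ of $Z_{b'}$ by its value on $C\cap X_\R$. This is consistent exactly when $H_{s,t}$ is constant on $C\cap X_\R$ for every such $C$. That holds automatically if each $C$ meets $X_\R$ in a connected set, e.g.\ when $Z^\R_{b'}$ is connected.

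\textbf{When that condition fails, nothing helps.} Suppose $q_1,q_2\in Z^\R_b$ lie in one component of $Z_b$, but the Hamiltonian of $\varphi_\tau$ has different constant values $c_1(\tau),c_2(\tau)$ near them. If $\omega=d\Lambda$ and $\Phi_1$ is Hamiltonian, then $\Phi_1^*\Lambda-\Lambda$ is exact, so its integral from $q_1$ to $q_2$ is path independent. Along a path inside $Z_b$ this integral is $\approx 0$. Along a real path in $Z^\R_a$ it is $\approx\int_0^1(c_2(\tau)-c_1(\tau))\,d\tau$, which need not vanish. The paper's one-line proof is silent on this point too. A complete argument must therefore include this connectivity condition, or secure it through the choice of $\rho$.

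\textbf{A smaller point.} Running the factors sequentially only shows that each $\Phi_t$ is a Hamiltonian diffeomorphism. The required isotopy is the $t$-family itself. It is Hamiltonian because $t\mapsto\exp\bigl(\tfrac1n p_l(\tfrac jn,t)\widetilde W_l\bigr)$ is generated by $\tfrac1n\,\partial_t p_l(\tfrac jn,t)\,\widetilde W_l$. Precomposing with $\Phi_0^{-1}$, which is close to $\id$, makes the family start at the identity.
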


\begin{proof}
    The proof is analogous to the proof of Theorem \ref{theorem: LocalCarleman}. In the places where we approximated vector fields we approximate Hamiltonian functions. For more details see \cite{Cotan}*{Theorem 5.18}.
\end{proof}

\begin{theorem} \label{Global-Carleman-ham}
    Let $X_\R$ be a smooth manifold endowed with a real symplectic form $\omega_\R$ and $(X, \omega, \sigma)$ a symplectic complexification of $(X_\R, \omega_\R)$ which is Stein. If $X$ has the $\sigma$-Hamiltonian density property, then for every Hamiltonian diffeomorphism $\varphi \in \ham_{\omega_\R}(X_\R)$, for any $k \in \N$ and any positive continuous function $\epsilon$ on $X_\R$, there exists a holomorphic automorphism $\Phi \in \saut_\omega(X)$ such that
	\[	
        d_k( j^k \Phi (p) - j^k \varphi(p) ) < \epsilon (p) \quad \text{ for all }\, p \in X_\R.	
    \]
\end{theorem}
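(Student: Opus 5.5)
The proof follows the inductive scheme of Theorem \ref{Global-Carleman} verbatim. The only change is that every cutoff of vector fields is replaced by a cutoff of Hamiltonian functions. Since $\varphi \in \ham_{\omega_\R}(X_\R)$, we fix a Hamiltonian isotopy $\varphi_t$ with $\varphi_0=\id$, $\varphi_1=\varphi$, generated by $V_t$ with $i_{V_t}\omega_\R = dH_t$. The induction hypotheses $(1_j)$--$(4_j)$ are kept, with $\Phi_j\in\saut_\omega(X)$ and $\psi_{j,t}$ a Hamiltonian isotopy of $X_\R$.

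\textbf{Base step.} We take $r_1=0$, $Z_0=\{p_0\}$, and choose $s_1$ and $T$ as before. We obtain $A_t\in\saut_\omega(X)$ approximating $\varphi_t$ on $\bigcup_t\varphi_t^{-1}(Z^\R_{T+3})$. This uses the analogue of Remark \ref{Local-Carleman-w/o-interpolation} for Theorem \ref{theorem: LocalCarleman-ham}: the same proof without interpolation works, by approximating the Hamiltonians $H_{s,t}$ via Mergelyan on $Z^\R_{a'}$ and symmetrizing to real holomorphic functions. The isotopy $\varphi_t\circ A_t^{-1}|_{X_\R}$ is then Hamiltonian, because Hamiltonian diffeomorphisms form a normal subgroup and $A_t|_{X_\R}$ is a Hamiltonian isotopy. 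Its generating Hamiltonian $G_t$ is unique up to a time-dependent constant.

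\textbf{Key point: cutting off Hamiltonians.} In the cutoff step we set $\psi_{1,t}$ to be the flow of the Hamiltonian vector field of $\gamma G_t$, not of $\gamma W_t$. Here $d(\gamma G_t)=\gamma\, dG_t + G_t\, d\gamma$, so for the cutoff to be harmless on the transition annulus $Z^\R_{T+1}\setminus Z^\R_T$, both $G_t$ and $dG_t$ must be $C^k$-small there, not just $dG_t$. We achieve this by normalizing the constant: replace $G_t$ by $G_t-G_t(p_0)$. Since $W_t$ is $C^{k}$-small on the connected set $Z^\R_{T+2}$, integrating along paths from $p_0$ makes $G_t$ itself $C^{k+1}$-small there. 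Connectedness is needed here, so we work componentwise, or use that each sublevel region through which we integrate meets $p_0$'s component; otherwise we normalize on each component. This plays the role of Lemma \ref{n-2-form} in the volume case and is much simpler, because primitives of exact $1$-forms are determined up to constants, so no cohomological hypothesis is needed. That is why Theorem \ref{Global-Carleman-ham} needs none. The same normalization is applied to $\xi_{j,t}$ with cutoff $\chi$, and to $\hat\xi_{j,t}$ with cutoff $\lambda$, in the induction step.

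\textbf{Induction step and limit.} We use Theorem \ref{theorem: LocalCarleman-ham} to produce $\Psi_{j,t}$, which approximates $\id$ near $Z_{s_j}$ and $\psi_{j,t}$ near $Z^\R_{a+2}$. Its hypothesis that the isotopy be the identity on $Z^\R_r$ is satisfied by $(3_j)$. We likewise produce $\Xi_{j,t}$, which approximates $\tilde\xi_{j,t}$ on $Z^\R_d$ and $\id$ on $Z_a$, and set $\Phi_{j+1}=\Psi_{j,1}\circ\Xi_{j,1}\circ\Phi_j$. All compositions remain in $\saut_\omega(X)$, and their restrictions to $X_\R$ remain Hamiltonian. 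The verification of $(1_{j+1})$--$(4_{j+1})$ is then word for word as in Theorem \ref{Global-Carleman}, as is convergence of $\Phi_j$ to some $\Phi\in\saut_\omega(X)$ via summable $\varepsilon_j$ and \cite{MR1760722}*{Proposition 5.1}. Symplecticity passes to the limit because $\Phi_j^*\omega=\omega$ holds for all $j$ and $\Phi_j\to\Phi$ locally uniformly with derivatives.

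The main obstacle is the normalized-Hamiltonian smallness estimate just described. One must control $G_t$ in $C^{k+1}$, uniformly in $t$, on the annulus from $C^k$-closeness of the maps to $\id$. We would derive this by writing the generator as $\dot\eta_t\circ\eta_t^{-1}$, which costs one derivative, so we approximate in $C^{k+2}$ from the start.
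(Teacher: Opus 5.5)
Your proposal is correct and follows the paper's proof. The paper reruns the induction of Theorem~\ref{Global-Carleman}, with Theorem~\ref{theorem: LocalCarleman-ham} as the local step, and in the three cutoff steps it multiplies the Hamiltonian function rather than the vector field by the cutoff, so that every intermediate isotopy stays Hamiltonian; that is exactly your argument.

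You go beyond the paper's sketch at two points.

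\begin{itemize}
\item \emph{The additive constant.} You normalize the constant in $G_t$ because $d(\gamma G_t)=\gamma\,dG_t+G_t\,d\gamma$. This is the analogue of the $d\chi\wedge\beta_t$ term that Lemma~\ref{n-2-form} controls in the volume case. When a sublevel set is disconnected, the cleanest fix is to approximate on a larger connected compact set. Then a single constant per component of $X_\R$ suffices, and the cut-off Hamiltonian vanishes identically on $Z^\R_T$.
\item \emph{Smallness of generators.} You note that the cutoff step needs the generators to be small, not just $C^k$-closeness of the maps. Concretely, the local approximation must control $\partial_t$, jointly in $(t,x)$, and not merely spatial derivatives uniformly in $t$.
\end{itemize}
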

\begin{proof}
    The proof is analogous to the proof of Theorem \ref{Global-Carleman}. In the three places where we modified a real vector field by multiplying it with a cutoff function we modify a Hamiltonian vector field by multiplying its Hamiltonian function with a cutoff function.  This ensures that the modified vector field is again Hamiltonian.
\end{proof}

\section{Applications}
In this section we find concrete examples where the assumptions of the approximation theorems \ref{Global-Carleman}, \ref{Global-Carleman-vol} and \ref{Global-Carleman-ham} are fulfilled. In addition we give criteria for the $\sigma$-density property and the $\sigma$-volume density property, Theorem \ref{sigma-DP} and Theorem \ref{sig-VDP}, based on similar known criteria for density properties developed by Kaliman and Kutzschebauch in \cites{MR2385667, MR3492044}. It is clear that many more examples can be obtained in a similar manner.

\subsection{Approximation for the Diffeomorphism Group of  Real Lie Groups} \

\begin{definition} \label{flex-compatiblePair}
    (i) Let $X$ be a Stein manifold and $\sigma \colon X \to X$ an antiholomorphic involution. 
    We say that $X$ is $\sigma$-\textbf{holomorphically} \textbf{flexible}, if complete real holomorphic  vector fields on $X$ span the tangent space $T_p X$ at every point $p \in X$.

    If $X$ is equipped with a holomorphic volume form, we say $X$ is $\sigma$-\textbf{holomorphically volume} \textbf{flexible}, if complete volume-preserving real holomorphic  vector fields on $X$ span the tangent space $T_p X$ at every point $p \in X$.

    (ii)  A pair $(V,W)$ of complete holomorphic vector fields is a \textbf{semicompatible pair} if $\overline{ \mathrm{Span}} (\ker V \cdot \ker W)$ contains a nontrivial ideal $I \subset \holo(X)$. Here the closure is taken in the compact-open topology.
    
    (iii) A semicompatible pair $(V,W)$ is a \textbf{compatible pair} if either (1) there exists $h \in \ker W$ such that $V(h) \in \ker V \setminus \{0\}$, or (2) there exists $h$ such that $V(h) \in \ker V \setminus \{0\}$ and $W(h) \in \ker W \setminus \{0\}$. 
\end{definition}

\begin{theorem} \label{sigma-DP}
    Let $X$ be a Stein manifold and $\sigma \colon X \to X$ be an antiholomorphic involution such that the fixed-point set $X_\R=\mathrm{Fix}(\sigma) \neq \emptyset$. If $X$ 
    \begin{enumerate}
        \item admits a compatible pair of complete real holomorphic vector fields $(V,W)$, and
        \item is $\sigma$-holomorphically flexible, and
        \item there is a point $p \in X$ such that the orbit of $W(p)$ under the induced action of the isotropy group $(\saut(X))_p$ on $T_p X$ contains a basis,
    \end{enumerate}
    then $X$ has the $\sigma$-density property. 
\end{theorem}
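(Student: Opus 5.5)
We follow the Kaliman--Kutzschebauch criterion for the density property and keep track of reality throughout. Let $\mathfrak{L}$ be the closure, in the compact-open topology, of the complex Lie algebra generated by complete real holomorphic vector fields. By Remark \ref{replace-Lie-by-Sum} it suffices to show $\mathfrak{L} = \vf(X)$. The key observation is that if $V$ is complete and real holomorphic and $f \in \ker V$ is a real holomorphic function, then $fV$ is again complete (its flow is $\varphi_{t f(x)}(x)$) and real holomorphic. Since any $f \in \ker V$ can be written as $f = f_1 + i f_2$ with $f_1 = (f + \overline{f \circ \sigma})/2$ and $f_2 = (f - \overline{f \circ \sigma})/(2i)$, and both lie in $\ker V$ because $V$ is real, we get $(\ker V) \cdot V \subset \mathfrak{L}$ as complex vector spaces. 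The same holds for $W$.

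\textbf{Step 1: a submodule.} For $f \in \ker V$ and $g \in \ker W$ the brackets
\[
[fV, gW] = fg[V,W] + fV(g)W - gW(f)V
\]
lie in $\mathfrak{L}$. Take $h$ as in the compatibility condition and form analogous brackets with $hfV$, $fV$, $gW$, $hgW$. Here $hfV$ need not be complete, but in case (1) $h \in \ker W$, so we use $hgW$ instead. The standard Kaliman--Kutzschebauch computation then shows that $\mathfrak{L}$ contains $\ker V \cdot \ker W \cdot V(h)\,W$, or the analogous expression in case (2). Passing to closure and span gives $I \cdot W \subset \mathfrak{L}$ for a nontrivial ideal $I$: it is $V(h)$ times the ideal from semicompatibility, and it is nonzero because $V(h) \neq 0$. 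Reality costs nothing here, since we only need complex linear combinations.

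\textbf{Step 2: spreading $W$ around.} For $\Phi \in \saut(X)$ the pullback $\Phi^*$ maps complete real holomorphic fields to complete real holomorphic fields, so $\Phi^* \mathfrak{L} = \mathfrak{L}$. Hence $\Phi^*(I)\,\Phi^*W \subset \mathfrak{L}$. Hypothesis (3) gives $\Phi_1, \dots, \Phi_n$ fixing $p$ with $\Phi_{i*}W(p)$ a basis of $T_pX$. Choosing $p$ so that it is not in the zero set of $I$ (if necessary we move $p$ by an automorphism, using flexibility), we obtain a submodule $M = J \cdot \vf(X) \subset \mathfrak{L}$ whose fibre spans $T_pX$, with $J$ a nonzero ideal not vanishing at $p$.

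\textbf{Step 3: from local to global.} By $\sigma$-holomorphic flexibility (2) and transitivity arguments, the flows of complete real fields move $p$ to any point in its $\saut$-orbit. Flexibility implies these orbits are open, and $X$ is connected, so the orbit is all of $X$. Pulling $M$ back by such flows yields submodules $\Phi^* M \subset \mathfrak{L}$ whose fibres span $T_x X$ at every $x \in X$. The sum of these submodules is a coherent subsheaf of $\vf(X)$ generating the tangent sheaf at every point. By Cartan's Theorem B, the corresponding space of global sections is all of $\vf(X)$, and it is contained in the closed $\mathfrak{L}$ after passing to closure (countably many modules suffice by Noetherian/Stein exhaustion arguments).

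The main obstacle is Step 1, namely the exact bracket bookkeeping that extracts a module $I\cdot W$ while only ever multiplying complete fields by elements of their own kernels. A secondary delicate point is ensuring, in Step 2, that the point $p$ of hypothesis (3) can be taken outside the zero locus of $I$.
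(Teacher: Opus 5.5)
\textbf{Step 1 has a genuine gap.} You assert that $hfV$ need not be complete and try to get by with $fV$, $gW$ and $hgW$ only. These fields do not close up the bookkeeping: $[fV,hgW]=fg\,V(h)\,W+h\,[fV,gW]$. The error term $h\,[fV,gW]$ is not a Lie combination of your fields; since $W(h)=0$, it is exactly $[hfV,gW]$. So the Kaliman--Kutzschebauch identity you invoke, $[fV,hgW]-[hfV,gW]=fg\,V(h)\,W$, needs $hfV$.

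The missing observation is the first thing the paper's proof records: $hfV$ \emph{is} complete. Indeed $V(hf)=f\,V(h)\in\ker V$, and any field $uV$ with $V(u)\in\ker V$ is complete. Along the $V$-orbit through $x_0$, the time change solves the linear equation $\dot s=u(x_0)+s\,(Vu)(x_0)$, which has global solutions in complex time. Since $V$ is real, $\ker V^2$ is stable under $u\mapsto\overline{\sigma^*u}$. Splitting $hf$ into its $\sigma$-real and $\sigma$-imaginary parts therefore puts $hfV$ in the complex span of complete real fields. The admissible multipliers are $\ker V^2$, not only $\ker V$. With this, your Step 1 goes through.

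\textbf{Step 3 rests on a false claim:} $\saut(X)$-orbits are not open, and $\saut(X)$ is not transitive. Every element of $\saut(X)$ commutes with $\sigma$, so it preserves $X_\R\neq\emptyset$, a totally real submanifold of real dimension $n$; correspondingly, real holomorphic fields are tangent to $X_\R$. So the automorphisms for which you proved $\Phi^*\mathfrak{L}=\mathfrak{L}$ cannot move $p$ to every point of $X$.

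Your globalization (an ideal without common zeros, then Theorem B and closedness of $\mathfrak{L}$) does work once you have a transitive family preserving $\mathfrak{L}$, and one is available: the complex-time flows $\varphi_t$, $t\in\C$, of complete real fields. They act transitively on (connected) $X$ by $\sigma$-flexibility, since the complex span of the field values is $T_xX$. They preserve $\mathfrak{L}$ because, for $Y\in\mathfrak{L}$, the entire $\vf(X)$-valued map $t\mapsto\varphi_t^*Y$ lies in the closed subspace $\mathfrak{L}$ for real $t$. Hence it lies in $\mathfrak{L}$ for all $t$, by Hahn--Banach and the identity theorem.

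This would be a cleaner globalization than the paper's, which avoids transitivity altogether. The paper takes the proper analytic locus $Y$ where the current module fails to span, adds modules pulled back by a $\sigma$-equivariant automorphism in general position with respect to the components of $Y$ (Kutzschebauch's Lemma 25), and lowers $\dim Y$ inductively.

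Finally, in Step 2, moving $p$ by an automorphism does not transport hypothesis (3), because it replaces $W(p)$ by a different vector. Instead, keep $p$ and use that the fields $\Phi_j^*W$ obtained from (3) (with the $\Phi_j$ inverted if necessary) still span $T_qX$ for $q$ near $p$. Then choose such a $q$ with $\Phi_j(q)$ outside the zero locus of $I$ for every $j$, so that $\sum_j\Phi_j^*(I\,W)$ spans $T_qX$.
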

\begin{proof}
    By definition of $\sigma$-density property we can take complex Lie combinations of complete real holomorphic vector fields. Denote this complex Lie algebra by $\lie^\sigma_X$. 

    For a real complete field $V$, if $f \in \ker V$ or $f \in \ker V^2 \setminus \ker V$ then also $f V \in \lie^\sigma_X$: We can write $f$ as
    \[
        f V = \frac{f + \overline{\sigma^* f}}{2} V + (-i) \left( \frac{i(f - \overline{\sigma^*f})}{2} V \right)
    \]
    which is a complex linear combination of complete real holomorphic fields. 
    Let $W$ be another complete real holomorphic field and $f \in \ker V, g \in \ker W$, $h \in (\ker V^2 \setminus \ker V) \cap \ker W$. Then $fV, gW, hfV, hgW \in \lie^\sigma_X$ and thus
    \begin{align*}
        [fV, hgW] - [hfV, gW] = fg V(h) W \in \lie^\sigma_X. 
    \end{align*}
    Thus the $\holo(X)$-submodule $E = \holo(X) V(h) W$ of $\vf(X)$ generated by a real holomorphic compatible pair $(V, W)$ is contained in $\lie^\sigma_X$. The case (2) in Definition \ref{flex-compatiblePair} (iii) is similar.

    Assume that the evaluation of the submodule $E$ at $p$ does not vanish. Indeed the set where the evaluation vanishes is a closed analytic  subvariety. By $\sigma$-holomorphic flexibility there is an automorphism in $\saut(X)$ taking $p$ out of this subvariety.
    
    By (3) and the Nakayama lemma there are $\Phi_1, \dots, \Phi_m \in (\saut(X))_p$ such that the stalk of $\bar{E}=\oplus_{j = 1}^m \Phi_j^* E$ at $p$ coincides with the stalk of the tangent sheaf $T_X$. 

    By its construction the set of global sections of $\bar{E}$ is a finitely generated $\holo(X)$-module of $\vf(X)$. Denote the generators by $V_1, V_2, \dots, V_m$. The equality of the stalks $\bar{E}_x = T_{X,x}$ is equivalent to whether the $V_i$'s span the tangent space $T_x X$, which is an open condition. 
    The analytic subset $Y$ of $X$ where $V_i$'s do not span can have countably many irreducible components $Y_1, Y_2, \dots$. There exists by Kutzschebauch \cite{MR3229363}*{Lemma 25} an equivariant automorphism $\Psi$ such that $\Psi(X \setminus Y) \cap Y_j \neq \emptyset$ for every $j = 1, 2, \dots$. Adding $\Psi^* \Phi_j^* E$'s to $\bar{E}$, we reduce the dimension of the singular variety and may proceed with the induction.
    %By the $\sigma$-holomorphic flexibility of $X$, there exist finitely many complete fields in $\vf^\sigma(X)$ spanning the tangent space $T_x X$ for all $x \in X$. The subgroup generated by the complex flows of these real holomorphic fields acts transitively on $X$. 
\end{proof}

\begin{proposition} \label{compPairs-G}
    Let $G$ be a linear algebraic group and $\sigma \colon G \to G$ the complex conjugation. Then 
    \begin{enumerate}
        \item there exists a compatible pair of real holomorphic vector fields if $G$ is different
    from $(\C^*)^m, m \in \N$ or $\C$.
        \item For $G \cong (\C^*)^m, m \ge 2$, there also exists  a semicompatible pair of real holomorphic vector fields. 
    \end{enumerate} 
\end{proposition}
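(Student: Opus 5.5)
We will follow the Kaliman--Kutzschebauch construction of compatible pairs on linear algebraic groups and check that it can be done with real holomorphic fields. Here $G$ is viewed as defined over $\R$ (a real form, e.g.\ split), so $\sigma$ is complex conjugation. The basic objects are one-parameter subgroups defined over $\R$. If $\lambda\colon\C\to G$ (unipotent) or $\lambda\colon\C^*\to G$ (toral) is a homomorphism defined over $\R$, i.e.\ $\sigma\circ\lambda=\lambda\circ\bar{\cdot}$, then right multiplication $g\mapsto g\lambda(t)$ gives a complete holomorphic vector field $V_\lambda$ whose flow commutes with $\sigma$ for real time. Hence $\sigma_*V_\lambda=\overline{V_\lambda}$, and $V_\lambda$ is real holomorphic. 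The same holds for left multiplication. So it suffices to produce compatible pairs built from left/right actions of real one-parameter subgroups.

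\textbf{Case (1): $G$ has a nontrivial unipotent radical or a non-abelian reductive part.} Then $G$ contains a one-dimensional unipotent subgroup $U\cong\C$ defined over $\R$. For a split real form this is a root subgroup $U_\alpha$. In general the unipotent radical is $\R$-split in characteristic zero, and root groups of the split part exist as long as $G^0$ is not a torus. Take $V$ to be the field of right $U$-multiplication and $W$ the field of left $U'$-multiplication. Here $U'$ is either $U$ itself (in the unipotent case) or the opposite root group $U_{-\alpha}$, or $U$ with left action when the two actions differ.

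Following Kaliman--Kutzschebauch, $\ker V=\holo(G/U)$ and $\ker W=\holo(U'\backslash G)$. The semicompatibility argument is the same as theirs: the span of products of left-$U'$- and right-$U$-invariant regular functions separates enough points to contain a nonzero ideal. Concretely, $\C[G]^{U}\otimes\C[G]^{U'}\to\C[G]$ has image containing a nonzero ideal, because $G/U$ and $U'\backslash G$ are quasi-affine and the product map $G\to G/U\times U'\backslash G$ is generically injective onto its image. Taking closure in $\holo(G)$ gives an ideal.

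For the compatibility element $h$, choose a real regular function, for instance a matrix coefficient of an $\R$-defined representation on which $U$ acts with a degree-one polynomial. Then $V(h)\in\ker V\setminus\{0\}$ and $W(h)\in\ker W\setminus\{0\}$ (alternative (2)), or $h\in\ker W$ (alternative (1)). In the $\mathrm{SL}_2$ model one can take $h$ an entry of the matrix.

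All functions used can be chosen real, since the group and the representations are defined over $\R$; averaging $f\mapsto(f+\overline{\sigma^*f})/2$ preserves the invariance because $\sigma$ normalizes $U$ and $U'$. The compatible pair is therefore real holomorphic.

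\textbf{Remaining case in (1): $G$ is abelian reductive.} Then $G=T\times$ (finite) with $T\cong(\C^*)^m$, or $G$ is unipotent abelian $\cong\C^m$. For $\C^m$ with $m\ge2$, use the standard real pair $V=\partial_{z_1}$, $W=\partial_{z_2}$ and $h=z_1$: then $h\in\ker W$ and $V(h)=1\in\ker V$. If $G$ is disconnected, work componentwise.

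\textbf{Case (2): $G\cong(\C^*)^m$, $m\ge2$, split.} Take $V=z_1\partial_{z_1}$ and $W=z_2\partial_{z_2}$; these are real because the characters are defined over $\R$. Then $\ker V\supset\C[z_1^{\pm1}]$-free functions in the other variables, and $\ker V\cdot\ker W$ spans all Laurent polynomials. Hence $\overline{\mathrm{Span}}$ is all of $\holo$, so the pair is semicompatible. There is no $h$ with $V(h)\in\ker V\setminus 0$, which is why only semicompatibility is claimed.

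\textbf{Main obstacle.} The hard step is the semicompatibility ideal in case (1) for general (non-split, non-reductive) real forms. There we have to ensure that the root subgroup and the invariant functions can be taken over $\R$. I would first handle split reductive $G$ via a real Borel subgroup and root groups, and then reduce the general case to a Levi decomposition defined over $\R$.
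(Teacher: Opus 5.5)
Your overall route matches the paper's. The paper's proof just observes that the compatible pairs of Kaliman--Kutzschebauch come from one-parameter subgroups defined over $\R$, so they are real holomorphic. Your case (2), with $(z_1\partial_{z_1}, z_2\partial_{z_2})$, is literally the paper's argument.

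\textbf{The gap in case (1).} The pair you propose, one field from right multiplication and one from left multiplication, is in general \emph{not} semicompatible. Your justification also fails: generic injectivity of $G\to G/U\times U'\backslash G$ only gives birationality, and a birational subalgebra need not contain a nonzero ideal. One would also need $\C[G]$ to be finite over it, so that the conductor is nonzero.

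A concrete counterexample is $\mathrm{SL}_2(\C)\ni\begin{pmatrix}a&b\\ c&d\end{pmatrix}$ with the split structure.
\begin{itemize}
\item Right multiplication by the upper unipotent group has kernel the holomorphic functions of $(a,c)$.
\item Left multiplication by the lower unipotent group has kernel the holomorphic functions of $(a,b)$.
\item So the closed span consists of limits of polynomials in $a,b,c$.
\item Fix $b,c$ with $\delta=1+bc\neq 0$. The slice of the compact set $\{|a|,|b|,|c|,|d|\le R\}$ is the annulus $|\delta|/R\le|a|\le R$, with $d=\delta/a$.
\item Uniform approximation there by polynomials in $a$ extends to the whole disc.
\item Hence for $g$ in an ideal contained in the closure, $g\,d^k=\gamma(a)\delta^k a^{-k}$ would extend holomorphically across $a=0$ for every $k$. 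This forces $g=0$.
\end{itemize}
The same computation rules out $U'=U$ (kernels $(a,c)$ and $(c,d)$, with $b=(ad-1)/c$ missing) and every other left/right combination of $U_{\pm\alpha}$.

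What works, and what Kaliman--Kutzschebauch use, is to let the opposite root groups $U_{\pm\alpha}$ act on the \emph{same} side. On $\mathrm{SL}_2$ the left invariants are then the rows $(c,d)$ and $(a,b)$, whose products span all of $\C[G]$. Taking $h=b$ gives $V(h)=d\in\ker V\setminus\{0\}$ and $h\in\ker W$.

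\textbf{A hole in your case split.} Abelian groups such as $\C^*\times\C$, or more generally $(\C^*)^k\times\C^l$ with $k,l\ge 1$, land in your case (1) because the unipotent radical is nontrivial. There left and right multiplication by $U$ coincide, so $V=W$ and $\ker V\cdot\ker W=\ker V$ contains no ideal. In this case, and for solvable groups like the $ax+b$ group, you must pair the unipotent direction with a torus direction. For example, $(\partial_w, z\partial_z)$ on $\C^*_z\times\C_w$ with $h=w$ is a compatible pair.

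\textbf{Non-split real forms.} Your ``main obstacle'' cannot be overcome in general. Suppose $G_\R$ is compact, for example $\mathrm{SO}_3(\R)$, which is the fixed set of entrywise conjugation on $\mathrm{SO}_3(\C)$. Take any complete real holomorphic $V$ and $h$ with $V(h)=g\in\ker V$. Along the real flow, which stays in the compact $G_\R$, you have $h(\phi_t(x))=h(x)+t\,g(x)$. So $g$ vanishes on $G_\R$, hence on $G$, and no compatible pair of real fields exists. The split hypothesis, implicit in the paper, is therefore essential. Under it, the root subgroups you need are defined over $\R$.
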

\begin{proof}
    (1) The compatible pairs constructed in \cite{MR2385667}*{\S 3} for $G$ are real holomorphic with respect to $\sigma$. 

    (2) We consider the pair of complete real holomorphic vector fields $( z \frac{\partial}{\partial z}, w \frac{\partial}{\partial w})$ on $\C^*_{z} \times \C^*_w \times (\C^*)^{m-2}$. The linear span of the product of their kernels contains all rational polynomials.
\end{proof}

We will use the following result proved in \cite{Cotan}*{Proposition 6.4}.
\begin{lemma} \label{sigDP-GenSet}
    Let $G$ be a linear algebraic group and $\sigma \colon G \to G$ the complex conjugation.  
    Then
    \begin{enumerate}
        \item $G$ is $\sigma$-holomorphically flexible.
        \item For any nonzero $v \in T_e G \cap \mathrm{Fix}(d \sigma_e)$ the orbit of $v$ under the action of the isotopy group $(\saut(G))_e$ on $T_eG$ contains a basis of $T_e G$. 
    \end{enumerate}
    
\end{lemma}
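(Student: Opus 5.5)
For part (1), I will use the left and right regular actions of $G$ on itself. For $\xi \in \mathfrak{g}_\R := T_eG \cap \mathrm{Fix}(d\sigma_e)$, the left- and right-invariant vector fields generated by $\xi$ are complete; their flows are $g \mapsto g\exp(t\xi)$ and $g \mapsto \exp(t\xi)g$. Since $\sigma$ is complex conjugation and a group homomorphism with $d\sigma_e(\xi)=\xi$, we get $\sigma(g\exp(t\xi)) = \sigma(g)\exp(\bar t \xi)$. Hence these fields are real holomorphic. Because $\mathfrak{g} = \mathfrak{g}_\R \oplus i\,\mathfrak{g}_\R$, the left-invariant fields with $\xi \in \mathfrak{g}_\R$ already span $T_gG$ at every $g$ over $\C$. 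This gives $\sigma$-holomorphic flexibility.

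For part (2), the natural elements of $(\saut(G))_e$ are the conjugations $c_h(g) = hgh^{-1}$ with $h \in G_\R := \mathrm{Fix}(\sigma)$. They fix $e$, commute with $\sigma$, and act on $T_eG$ by $\mathrm{Ad}_h$. So the real span of the orbit contains the $\mathrm{Ad}(G_\R)$-invariant subspace generated by $v$. This is not enough in general, for example when $G$ is abelian or $v$ lies in an ideal. I will therefore supplement conjugations with shears: time-$\lambda$ flows of fields $fV$, where $V$ is a complete real right- or left-invariant field, $f \in \ker V$ is real holomorphic, and $f$ vanishes at $e$ (or $fV$ vanishes at $e$). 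Such flows fix $e$ and commute with $\sigma$. The derivative at $e$ is $v \mapsto v + \lambda\, df_e(v)\, V(e)$.

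The plan is then as follows. Given nonzero $v$, choose for each basis vector $\eta$ of $\mathfrak{g}_\R$ a real regular function $f$, invariant under the flow of the field generated by $\eta$, with $f(e)=0$ and $df_e(v) \neq 0$. Composing the resulting shears produces $v + \lambda\eta$ for suitable $\lambda$. The real span of $\{v\} \cup \{v+\lambda\eta\}$ is then all of $T_eG$, and these vectors lie in the orbit, so the orbit contains a basis.

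The hard step is producing such invariant functions $f$. The kernel of an invariant field of a one-parameter subgroup $\exp(t\eta)$ consists of functions on the quotient $G/\exp(\C\eta)$. For a unipotent or semisimple $\eta$ this quotient is affine, or at least has enough regular functions pulled back from a quasi-affine quotient, and we need one whose differential does not kill $v$. When $v$ is proportional to $\eta$ this fails. In that case I will first move $v$ using either a conjugation or a shear along a different direction, and then treat the remaining directions. Here it is convenient to use a Levi/Jordan decomposition and work separately in the unipotent radical, where one has exponential coordinates and polynomial invariants, and in the reductive part. In the torus factor, I will use shears $g \mapsto \ldots$ built from characters. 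The torus case $(\C^*)^m$, $m\ge 2$, needs these shears with characters as invariant functions; the case $(\C^*)^1$ is degenerate but still trivially fine since $T_eG$ is one-dimensional and $v$ itself is a basis. I expect this case analysis, ensuring nondegeneracy $df_e(v)\neq 0$ for enough independent directions, to be the main obstacle.
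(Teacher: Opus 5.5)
\textbf{Verdict.} Part (1) is correct: left-invariant fields of a real basis of $\mathfrak{g}_\R$ are complete, real holomorphic, and span every $T_gG$. The paper gives no argument for this lemma and imports it from \cite{Cotan}*{Proposition 6.4}. For part (2) your mechanism is the right one. For a complete real field $V_\eta$ and a real $f\in\ker V_\eta$ with $f(e)=0$, the real-time flow $g\mapsto g\exp(\lambda f(g)\eta)$ lies in $(\saut(G))_e$, and its differential at $e$ is $w\mapsto w+\lambda\,df_e(w)\,\eta$. But the proposal does not prove the statement. The existence of such $f$, which you call the main obstacle, is the whole content of (2), and the claim you lean on for it is false as stated.

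\textbf{The false claim.} A semisimple $\eta\in\mathfrak{g}_\R$ need not generate a closed subgroup, and then $\ker V_\eta$ can be trivial. Take $G=(\C^*)^2$ and $\eta=(1,\sqrt2)$ in logarithmic coordinates, so $V_\eta=z_1\partial_{z_1}+\sqrt2\,z_2\partial_{z_2}$. Any $f\in\ker V_\eta$ lifts to $h(w_2-\sqrt2\,w_1)$ with $h$ entire. Moreover $h$ is invariant under the translations by $2\pi i(n-\sqrt2\,m)$, $m,n\in\mathbb{Z}$, which are dense in $i\R$. Hence $h$, and so $f$, is constant. So ``for each basis vector $\eta$'' cannot mean an arbitrary basis. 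The vague case analysis you describe (Levi/Jordan decomposition, first moving $v$) does not address this.

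\textbf{The missing idea: choose the basis.} Take a basis $\eta_1,\dots,\eta_N$ of $\mathfrak{g}_\R$ such that each $H_k=\exp(\C\eta_k)$ is a closed, $\sigma$-stable subgroup isomorphic to $\C$ or $\C^*$. For the split form one can use real root vectors, a real basis of the Lie algebra of the unipotent radical, and cocharacter directions of a split maximal torus. Then $G/H_k$ is quasi-affine: it is affine by Matsushima's theorem when $H_k\cong\C^*$, and quasi-affine since unipotent subgroups are observable when $H_k\cong\C$. Consequently the right-$H_k$-invariant regular functions $F$ on $G$ have differentials at $e$ whose common kernel is exactly $\C\eta_k$.

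\textbf{Making $f$ real and closing the argument.} Replace $F$ by $(F+\overline{\sigma^*F})/2$, or by the same expression for $iF$, and subtract the value at $e$; invariance survives because $\sigma(H_k)=H_k$. This gives a real $f\in\ker V_{\eta_k}$ with $f(e)=0$ and $df_e(v)\in\R\setminus\{0\}$ whenever $v\notin\R\eta_k$. Here recall that for real $v,\eta_k$ one has $v\in\C\eta_k$ iff $v\in\R\eta_k$. A single shear with $\lambda=1/df_e(v)$ then puts $v+\eta_k$ in the orbit. Now $v$ is proportional to at most one $\eta_{k_0}$, and in that case $v$ itself supplies that direction. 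Hence $v$ together with the vectors $v+\eta_k$, $k\neq k_0$, spans $T_eG$ over $\C$. No conjugations, compositions of shears, or preliminary moves of $v$ are needed.
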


\begin{proposition} 
    Let $G$ be a linear algebraic group whose connected components are different
    from $(\C^*)^m, m \in \N$ or $\C$ and let $\sigma \colon G \to G$ be the complex conjugation. Then $G$ has the $\sigma$-density property. 
\end{proposition}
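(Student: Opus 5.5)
The plan is to verify the three hypotheses of Theorem \ref{sigma-DP} for $X = G$ with $\sigma$ the complex conjugation, and then apply that theorem. Here $G$ is a linear algebraic group (defined over $\R$, embedded in some $\mathrm{GL}_N(\C)$) and hence Stein. Its fixed-point set contains the identity $e$, so $X_\R \neq \emptyset$.

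First, hypothesis (2), $\sigma$-holomorphic flexibility, is exactly Lemma \ref{sigDP-GenSet}(1).

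Second, for hypothesis (1) I would use Proposition \ref{compPairs-G}(1), which gives a compatible pair $(V,W)$ of complete real holomorphic vector fields. The only wrinkle is that the proposition is stated under the assumption that $G$ itself is not $(\C^*)^m$ or $\C$, whereas our assumption concerns the connected components. I would handle this by working with the identity component $G^0$, which is again a linear algebraic group, and is not isomorphic to $(\C^*)^m$ or $\C$. The pair constructed in \cite{MR2385667}*{\S 3} lives on $G^0$ and is built from one-parameter unipotent subgroups acting by left/right multiplication. Such fields extend to all of $G$ as complete, real holomorphic fields, because left and right translations by real elements commute with $\sigma$. The semicompatibility ideal can be taken in $\holo(G)$, for instance by multiplying with the characteristic function of $G^0$, which is locally constant and lies in both kernels. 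Alternatively, since all components are biholomorphic to $G^0$ via translations, one can argue componentwise. I expect this bookkeeping between $G$ and $G^0$ to be the main obstacle: one must check that the $\C$-linear Lie algebra argument in the proof of Theorem \ref{sigma-DP} still produces a module $E$ whose stalk at $p$ is nonzero, and then move $p$ around each component.

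Third, for hypothesis (3), take $p = e$ and $W(e)$. I need $W(e) \neq 0$ and $W(e) \in \mathrm{Fix}(d\sigma_e)$; the latter holds since $W$ is real holomorphic and $e$ is $\sigma$-fixed. If $W(e)=0$, I would replace $p$ by a point $g \in G_\R$ with $W(g)\neq 0$; such a point exists because the zero set of $W$ is a proper analytic subset and $G_\R$ is totally real of maximal dimension, hence not contained in it. I would then conjugate by the real translation $x\mapsto g^{-1}x$, which lies in $\saut(G)$, to transport the situation back to $e$. Lemma \ref{sigDP-GenSet}(2) then says that the orbit of $W(e)$ under $(\saut(G))_e$ contains a basis of $T_eG$.

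With (1)–(3) verified, Theorem \ref{sigma-DP} yields the $\sigma$-density property of $G$.
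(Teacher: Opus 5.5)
Your proposal is correct and follows the paper's proof: the paper simply invokes Theorem \ref{sigma-DP}, with hypothesis (1) supplied by Proposition \ref{compPairs-G} and hypotheses (2) and (3) supplied by Lemma \ref{sigDP-GenSet}. Your extra bookkeeping is sound and fills in details the paper leaves implicit. This covers passing to the identity component $G^0$ to match the hypothesis of Proposition \ref{compPairs-G}. It also covers checking that $W(e)\neq 0$ and $W(e)\in\mathrm{Fix}(d\sigma_e)$, so that Lemma \ref{sigDP-GenSet}(2) applies at $p=e$.
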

\begin{proof}
    By Theorem \ref{sigma-DP}, Proposition \ref{compPairs-G} and Lemma \ref{sigDP-GenSet} the $\sigma$-density property of $G$ follows. 
\end{proof}

Thus by Theorem \ref{Global-Carleman} diffeomorphisms of such split real forms $G_\R$ are holomorphically approximable.

\begin{theorem} \label{G-Diffeo}
    Let $G$ be a linear algebraic group whose connected components are different
    from $(\C^*)^m, m \in \N$ or $\C$ and let $\sigma \colon G \to G$ be the complex conjugation. Denote by $G_\R = \mathrm{Fix}(\sigma)$ the split real form of $G$.
    
    Then for every diffeomorphism $\varphi \in \diff(G_\R)$ smoothly isotopic to $\id$, for any $k \in \N$ and any positive continuous function $\epsilon$ on $G_\R$, there exists a holomorphic automorphism $\Phi \in \saut(G)$ such that
	\[	
        d_k( j^k \Phi (p) - j^k \varphi(p) ) < \epsilon (p) \quad \text{ for all }\, p \in G_\R.	
    \]
\end{theorem}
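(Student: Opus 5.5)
The plan is to deduce Theorem \ref{G-Diffeo} directly from Theorem \ref{Global-Carleman}. We take $X = G$, $\sigma$ the complex conjugation and $X_\R = G_\R = \mathrm{Fix}(\sigma)$. Then three hypotheses of Theorem \ref{Global-Carleman} have to be checked: $G$ is Stein, $(G,\sigma,\jmath)$ with $\jmath$ the inclusion is a complexification of $G_\R$ in the sense of Definition \ref{definition: realform}(1), and $G$ has the $\sigma$-density property.

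\textbf{Stein and complexification.} A linear algebraic group is affine, hence Stein. Since $G$ is defined over $\R$, complex conjugation of the ambient $\mathrm{GL}_N(\C)$ restricts to an antiholomorphic involution of $G$. Its fixed-point set $G_\R$ is a closed real submanifold of real dimension $\dim_\C G$: at each fixed point, $d\sigma$ is an antilinear involution of the tangent space, so its fixed space is a real form. The inclusion of $G_\R$ is therefore a proper embedding whose image is all of $\mathrm{Fix}(\sigma)$, and in particular a union of components of it. One should note that $G_\R$ may be disconnected. This causes no trouble, because Theorem \ref{Global-Carleman} is stated for an arbitrary smooth manifold $X_\R$ and the exhaustion sets $Z_r^\R$ are just intersections with $\mathrm{Fix}(\sigma)$.

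\textbf{$\sigma$-density property.} This is exactly the Proposition immediately preceding the theorem. It combines Theorem \ref{sigma-DP}, Proposition \ref{compPairs-G} and Lemma \ref{sigDP-GenSet} under the hypothesis that no connected component is $(\C^*)^m$ or $\C$. The one point I would double-check is condition (3) of Theorem \ref{sigma-DP}. It requires the orbit of $W(p)$ under $(\saut(G))_p$ to contain a basis, whereas Lemma \ref{sigDP-GenSet}(2) gives this at $p=e$ for any nonzero real tangent vector. So I would take $p=e$ and use that $W$ is real holomorphic: then $d\sigma_e(W(e)) = W(e)$, so $W(e)\in T_eG\cap \mathrm{Fix}(d\sigma_e)$. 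If instead $W(e)=0$, I would move $e$ by a $\sigma$-equivariant automorphism, using $\sigma$-flexibility, to a real point where $W$ does not vanish, and conjugate by a left translation by a real element, which lies in $\saut(G)$. Since $W$ is not identically zero and $G_\R$ is Zariski dense in each component meeting it, such a real point exists.

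\textbf{Conclusion and main obstacle.} With all hypotheses verified, Theorem \ref{Global-Carleman} gives, for every $\varphi\in\diff(G_\R)$ smoothly isotopic to the identity, every $k$ and every positive continuous $\epsilon$, an automorphism $\Phi\in\saut(G)$ with $d_k(j^k\Phi(p) - j^k\varphi(p))<\epsilon(p)$ on $G_\R$, which is the claim. The main obstacle is the point-selection step for condition (3), namely making sure the base point can be taken real and the vector $W(p)$ real and nonzero. The remaining steps are bookkeeping.
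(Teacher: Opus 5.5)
Your proposal is correct and takes the same route as the paper: the paper deduces Theorem \ref{G-Diffeo} in one line by combining the $\sigma$-density property of $G$ (the preceding Proposition, which comes from Theorem \ref{sigma-DP}, Proposition \ref{compPairs-G} and Lemma \ref{sigDP-GenSet}) with Theorem \ref{Global-Carleman}. Your additional check of condition (3) of Theorem \ref{sigma-DP} soundly fills in a step the paper leaves implicit: a real holomorphic $W$ has $W(g)\in T_gG\cap\mathrm{Fix}(d\sigma_g)$ at a real point $g$, and left translation by that real element lies in $\saut(G)$ and carries the situation back to $e$.
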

\medskip 

\subsection{Approximation for Volume-preserving Diffeomorphisms}

\begin{proposition} \label{sigVDP-GenSet}
    Let $G$ be a linear algebraic group and $\sigma \colon G \to G$ the complex conjugation. Then for any nonzero $\alpha \in \wedge^2 (T_e G \cap \mathrm{Fix}(d \sigma_e))$ the orbit of $\alpha$ under the action of the isotopy group $(\saut(G))_e$ on $\wedge^2 T_e G$ contains a basis of $\wedge^2 T_e G$. 
\end{proposition}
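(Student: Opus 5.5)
Here is the plan. I will reduce the statement to Lemma \ref{sigDP-GenSet}(2) together with the functoriality of the isotropy action on exterior powers. Write $\mathfrak{g}=T_eG$ and $\mathfrak{g}_\R = T_eG\cap\mathrm{Fix}(d\sigma_e)$, so that $\mathfrak{g}=\mathfrak{g}_\R\otimes\C$. Every $\Phi\in(\saut(G))_e$ fixes $e$ and commutes with $\sigma$. Hence $d\Phi_e$ commutes with $d\sigma_e$ and preserves $\mathfrak{g}_\R$, and the induced action on $\wedge^2\mathfrak{g}$ is $v\wedge w\mapsto d\Phi_e v\wedge d\Phi_e w$. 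Let $H\subset \mathrm{GL}(\mathfrak{g}_\R)$ be the group of these differentials. It suffices to show that the orbit $H\alpha$ spans $\wedge^2\mathfrak{g}$ over $\C$. Since $H$ preserves $\wedge^2\mathfrak{g}_\R$, this is equivalent to showing that $H\alpha$ spans $\wedge^2\mathfrak{g}_\R$ over $\R$.

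\textbf{Step 1: identify enough of $H$.} As in the proof of Lemma \ref{sigDP-GenSet} (\cite{Cotan}*{Proposition 6.4}), I would use the automorphisms that produce the spanning property. These are conjugation $g\mapsto hgh^{-1}$ by $h\in G_\R$, which fixes $e$, commutes with $\sigma$ and has differential $\mathrm{Ad}(h)$. They also include shear-type automorphisms $g\mapsto g\exp(f(g)X)$ with $X\in\mathfrak{g}_\R$ nilpotent (or with left-invariant complete fields) and $f$ a real holomorphic function invariant under the corresponding flow with $f(e)=0$. The differential of such a shear at $e$ is $\mathrm{id}+ X\otimes df_e$. So $H$ contains the transvections $v\mapsto v+\lambda(v)X$ for the covectors $\lambda=df_e$ that are realizable. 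The key claim I would extract is that the real span of $H$ inside $\mathrm{End}(\mathfrak{g}_\R)$ is all of $\mathrm{End}(\mathfrak{g}_\R)$. Equivalently, $\mathfrak{g}_\R$ is an absolutely irreducible $H$-module. Part (2) of Lemma \ref{sigDP-GenSet} gives exactly the irreducibility: every nonzero real vector has a spanning orbit.

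\textbf{Step 2: pass to $\wedge^2$.} Take $\alpha\ne0$ in $\wedge^2\mathfrak{g}_\R$ and let $U\subset\wedge^2\mathfrak{g}_\R$ be the real span of $H\alpha$, which is $H$-invariant. If the group contains transvections $\tau=\mathrm{id}+X\otimes\lambda$, then $\tau\alpha-\alpha = X\wedge\iota_\lambda\alpha$, up to the term $X\wedge X=0$. So $U$ contains $X\wedge\iota_\lambda\alpha$. Choosing $\lambda$ with $w:=\iota_\lambda\alpha\neq 0$ gives a decomposable element $X\wedge w\in U$. Then I would act by $H$ on the first and second slots separately. Using transvections with vectors $X'$ and covectors chosen to kill $X$ but not $w$ (or vice versa), differences produce $X'\wedge w$ and $X\wedge w'$. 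Combined with irreducibility from Step 1, this moves each factor independently through spanning sets, and all $v_i\wedge v_j$ land in $U$. Hence $U=\wedge^2\mathfrak{g}_\R$, and complexifying gives the statement.

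\textbf{Main obstacle.} The delicate point is Step 1: guaranteeing that transvections $X\otimes\lambda$ exist with enough freedom in $\lambda$ and $X$, real and in the isotropy group, for an arbitrary linear algebraic group. For reductive factors I would first try $\mathrm{Ad}(G_\R)$ plus unipotent shears along root vectors with $f$ a matrix coefficient invariant under the unipotent flow. For tori I would use $\mathbb{Z}$-linear automorphisms of $(\C^*)^m$, giving $\mathrm{GL}_m(\mathbb{Z})$ acting on $\R^m$, whose action on $\wedge^2$ is also spanning. For $G$ with a unipotent radical I would use products of these. If a uniform transvection supply fails, the fallback is the classical fact that if $H$ is Zariski dense in $\mathrm{SL}(\mathfrak{g}_\R)$ (or $\mathrm{GL}$), then $\wedge^2\mathfrak{g}_\R$ is irreducible. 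So it would suffice to show that the Zariski closure of $H$ contains $\mathrm{SL}$, which I would deduce from the presence of one transvection together with irreducibility (a standard result: an irreducible linear group generated by transvections is $\mathrm{SL}$ or symplectic). In the symplectic case, the invariant line in $\wedge^2$ would have to be excluded separately, using a non-symplectic element such as a transvection scaling.
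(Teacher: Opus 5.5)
Your architecture is the right one: shears $g\mapsto g\exp(f(g)X)$, with $X\in\mathfrak{g}_\R$ and $f$ real holomorphic, right-$\exp(\C X)$-invariant and $f(e)=0$, put transvections $\id+X\otimes df_e$ into the image $H$ of $(\saut(G))_e$. Enough of them force $\mathrm{SL}(\mathfrak{g}_\R)\subset H$, and $\wedge^2$ of the standard representation is irreducible. The paper itself gives no self-contained proof; it only points to the cotangent-bundle paper (Prop.~6.4) and Kaliman--Kutzschebauch (Lemma~5.1). However, the one input that makes your argument work is exactly the step you leave open as the ``main obstacle'', and nothing around it supplies it.

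\emph{(a) The reduction to Lemma~\ref{sigDP-GenSet}(2) cannot work.} The announced reduction to that lemma plus functoriality fails, and the ``key claim'' of Step~1 is the wrong target. Absolute irreducibility on $\mathfrak{g}_\R$ does not pass to $\wedge^2\mathfrak{g}_\R$. Already $\mathrm{Ad}(G_\R)\subset H$ is absolutely irreducible for $G=\mathrm{SL}_3$, yet the kernel of the bracket $\wedge^2\mathfrak{g}_\R\to\mathfrak{g}_\R$ is invariant; $\mathrm{SO}(4)$ and $\mathrm{Sp}_{2m}(\R)$ are further examples. So irreducibility alone can never give the claim.

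\emph{(b) Step~2 needs full root groups.} It only runs if, for directions $X'$ in a spanning set, you may pick $\mu$ with $\mu(X')=0$ and prescribed values on two given vectors. That means the full group $\mathcal{T}_{X'}=\{\id+X'\otimes\mu:\mu(X')=0\}$ must lie in $H$; ``realizable covectors'' is not enough. For instance, if $\alpha=X\wedge b$, every element of $\mathcal{T}_X$ fixes $\alpha$, so $X\wedge\iota_\lambda\alpha$ may vanish.

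\emph{(c) Your suggested sources do not provide these root groups.} Product-type automorphisms of $G\cong L\times R_u$ preserve $\mathfrak{l}_\R\oplus\mathfrak{u}_\R$, and together with $\mathrm{Ad}$ they preserve the ideal $\mathfrak{u}_\R$, so $\wedge^2\mathfrak{u}_\R$ would stay invariant. The fallback applies a classification of irreducible groups \emph{generated} by transvections to $H$, which is not so generated. The subgroup generated by its transvections need not be irreducible, e.g.\ $(\mathrm{SL}_2\times\mathrm{SL}_2)\rtimes\mathbb{Z}/2$ on $\R^4$. With only one-parameter families of transvections, the symplectic case, where $\wedge^2$ has an invariant line, remains open and you do not exclude it.

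The missing idea is that the root groups are full, and this follows from a quotient argument.

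\emph{Fullness of $\mathcal{T}_X$.} Take $0\neq X\in\mathfrak{g}_\R$ with $U=\exp(\C X)$ closed. Examples are a real root vector, an element of the Lie algebra of the unipotent radical, or a real cocharacter direction of a split torus; in the paper's split setting one exists once $\dim G\ge 1$. Then $G/U$ is quasi-affine (affine if $U\cong\C^*$), so right-$U$-invariant regular functions give local coordinates on $G/U$ at $eU$. The map $f\mapsto\overline{f\circ\sigma}$ preserves these invariants. Hence the differentials $df_e|_{\mathfrak{g}_\R}$ of real invariants with $f(e)=0$ form the whole annihilator of $X$ in $\mathfrak{g}_\R^*$, i.e.\ $\mathcal{T}_X\subset H$.

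\emph{From one root group to $\mathrm{SL}$.} For $h\in H$ one has $h\mathcal{T}_Xh^{-1}=\mathcal{T}_{hX}$. By Lemma~\ref{sigDP-GenSet}(2) the vectors $hX$ span $\mathfrak{g}_\R$, so there is a basis $Y_1,\dots,Y_n$ with every $\mathcal{T}_{Y_i}\subset H$. Using the dual basis, where $Y_j^*(Y_i)=0$ for $i\neq j$, you obtain all elementary matrices $\id+t\,Y_i\otimes Y_j^*$. Hence $\mathrm{SL}(\mathfrak{g}_\R)\subset H$.

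\emph{Conclusion.} For $n\ge 2$, $\wedge^2\R^n$ is an irreducible $\mathrm{SL}_n(\R)$-module. So the real span of $H\alpha$ is $\wedge^2\mathfrak{g}_\R$ for every $\alpha\neq 0$, which yields a basis of $\wedge^2T_eG$; for $n\le 1$ there is nothing to prove. With this, Step~2 and the Zariski-closure fallback become unnecessary.
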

\begin{proof}
    The proof is similar to that of Lemma \ref{sigDP-GenSet} (2) in \cite{Cotan}*{Proposition 6.4}. See also \cite{MR3492044}*{Lemma 5.1}.
\end{proof}

\begin{theorem} \label{sig-VDP}
    Let $X$ be a Stein manifold of dimension $n \ge 2$ and $\sigma \colon X \to X$ be an antiholomorphic involution such that the fixed-point set $X_\R=\mathrm{Fix}(\sigma) \neq \emptyset$. Let $\Omega$ be a holomorphic volume form on $X$ and assume $H^{n-1}(X, \C) = 0$ where $n = \dim X$. If
    \begin{enumerate}
        \item $X$ is $\sigma$-holomorphically volume flexible, and
        \item $X$ admits a semicompatible pair $(V,W)$ of complete real holomorphic volume-preserving vector fields, and
        \item there are $p \in X$ such that the orbit of $V(p) \wedge W(p) \in \wedge^2 T_p X$ under the action of the isotopy group $(\saut(X))_e$ contains a basis of $\wedge^2 T_p X$, 
    \end{enumerate}
     then $X$ has the $\sigma$-volume density property. 
\end{theorem}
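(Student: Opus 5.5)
We plan to adapt the Kaliman--Kutzschebauch criterion for the volume density property to the real-holomorphic setting. This mirrors how Theorem \ref{sigma-DP} adapts the criterion for the density property. The main tool is the standard isomorphism between divergence-free fields and closed $(n-1)$-forms. A volume-preserving field $\Theta$ corresponds to the closed form $i_\Theta \Omega$. Under $H^{n-1}(X,\C)=0$, these forms are exactly $d\beta$ for holomorphic $(n-2)$-forms $\beta$. Because $\Omega$ is real holomorphic, $\Theta$ is real holomorphic exactly when $i_\Theta\Omega$ is. Let $\mathrm{Lie}^\sigma_{\Omega}$ be the complex Lie algebra generated by complete real holomorphic volume-preserving fields. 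It suffices to show that its closure contains $\{\Theta : i_\Theta\Omega = d\beta\}$ for $\beta$ ranging over a set of $(n-2)$-forms whose $\holo(X)$-span is dense in all holomorphic $(n-2)$-forms.

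Step 1 treats a real semicompatible pair $(V,W)$. For $f\in\ker V$ the field $fV$ is volume-preserving, since $\mathrm{div}(fV)=V(f)+f\,\mathrm{div}V=0$. As in the proof of Theorem \ref{sigma-DP}, we split $f$ into $(f+\overline{\sigma^*f})/2$ and $i(f-\overline{\sigma^*f})/2$. This writes $fV$ as a complex combination of complete real holomorphic volume-preserving fields, so $fV$ and $gW$ lie in $\mathrm{Lie}^\sigma_\Omega$ for $f\in\ker V$ and $g\in\ker W$. The standard identity gives $[fV,gW]$ as the field $\Theta$ with $i_\Theta\Omega = d\bigl(fg\, i_V i_W\Omega\bigr)$ (up to sign). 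Taking closed spans over $\ker V\cdot\ker W$, the ideal $I$ from semicompatibility yields $\Theta_{h}$ with $i_{\Theta_h}\Omega=d(h\, i_Vi_W\Omega)$ for all $h\in I$.

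Step 2 spreads this by automorphisms. For $\Phi\in\saut_\Omega(X)$, pulling back preserves $\mathrm{Lie}^\sigma_\Omega$, since conjugating a complete real field by a $\sigma$-equivariant automorphism gives a complete real field. We therefore obtain $d(h\,\Phi^*(i_Vi_W\Omega))$. Condition (3) gives finitely many isotropy elements $\Phi_j$ at $p$ such that the bivectors $\Phi_j^*(V\wedge W)(p)$ span $\wedge^2T_pX$. Via $\Omega$, this says the $(n-2)$-forms $\Phi_j^*(i_Vi_Wi\Omega)$ span $\wedge^{n-2}T^*_pX$. Next we choose $p$ off the zero set of $I$, using $\sigma$-holomorphic volume flexibility to move $p$ by an element of $\saut_\Omega(X)$. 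By Nakayama's lemma, the $\holo(X)$-module $M$ generated by the $I\cdot\Phi_j^*(i_Vi_W\Omega)$ then has stalk at $p$ equal to the full sheaf of $(n-2)$-forms. We then run the same induction on the degeneracy locus as in Theorem \ref{sigma-DP}. This uses \cite{MR3229363}*{Lemma 25}, with an equivariant volume-preserving automorphism built from flows of complete real volume-preserving fields, as supplied by volume flexibility. The result is a finitely generated submodule whose stalks agree with the sheaf of $(n-2)$-forms everywhere. Cartan's Theorem B then shows that the global sections are all holomorphic $(n-2)$-forms.

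Step 3 closes the argument. Every volume-preserving field has $i_\Theta\Omega=d\beta$ with $\beta=\sum_k a_k\mu_k$, where $\mu_k\in M$-generators of the form $h\,\Phi^*(i_Vi_W\Omega)$ and $a_k\in\holo(X)$. We still need $d(a\mu)$ in the closure for arbitrary $a$, not only for $a\in I$ or kernels. The plan is to approximate $a$ by sums of products $fg$ with $f\in\ker \Phi^*V$ and $g\in \ker\Phi^*W$, multiplied by elements of $I$, absorbing $I$ as in the original argument of \cite{MR3492044}. This step is the main obstacle: showing that multiplication of the generating forms by arbitrary functions stays in the closure. If it fails, I would instead follow \cite{MR3492044} directly, using its relation $[fV,gW]$ together with the fact that the $d$-images of the module suffice by Theorem B and $H^{n-1}=0$, and check that every Lie bracket there involves only fields of the form $fV$ with $f\in\ker V$. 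That check is the only point where reality must be verified.
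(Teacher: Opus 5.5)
\paragraph{Verdict.} Your outline follows the paper's proof step for step:
\begin{itemize}
\item the identity $i_{[fV,gW]}\Omega = d(fg\, i_Vi_W\Omega)$ for $f\in\ker V$, $g\in\ker W$;
\item the splitting of $f$ into real holomorphic parts, as in Theorem~\ref{sigma-DP};
\item enlarging the module via condition (3) and volume flexibility until its stalks agree with those of the sheaf of holomorphic $(n-2)$-forms everywhere;
\item finally, $H^{n-1}(X,\C)=0$ to pass from exact to all closed $(n-1)$-forms.
\end{itemize}
The genuine gap is Step~3. You flag it as the main obstacle and leave it open with a fallback plan. It is in fact a one-line observation that you did not make.

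\paragraph{Why Step~3 is immediate.} The set $I$ supplied by semicompatibility is an \emph{ideal}. So for $a\in\holo(X)$ and $h\in I$ you have $ah\in I$. Hence $a\cdot h\, i_Vi_W\Omega = (ah)\, i_Vi_W\Omega$ is again one of the forms already produced in Step~1.
\begin{itemize}
\item $E = I\cdot i_Vi_W\Omega$ is therefore an $\holo(X)$-submodule. Every $\beta\in E$ has $d\beta = i_\Theta\Omega$ for some $\Theta$ in the closure of $\mathrm{Lie}^\sigma_\Omega$. This is exactly how the paper phrases it.
\item The same holds for $\Phi^*E = (\Phi^*I)\cdot \Phi^*(i_Vi_W\Omega)$. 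Note that after pulling back, the multiplier ideal is $\Phi^*I$, not $I$ as you wrote in Step~2.
\item It also holds for finite sums of such modules, since the closure of a Lie algebra is a closed vector space.
\end{itemize}
So once Theorem~B shows that the module from Step~2 is all of the holomorphic $(n-2)$-forms, you are finished. No approximation of $a$ is needed.

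\paragraph{Why your proposed fix would not work in general.} You suggest approximating $a$ by sums of products $fg$ with $f\in\ker\Phi^*V$ and $g\in\ker\Phi^*W$. Semicompatibility does not allow this. It only says that the closed span of $\ker V\cdot\ker W$ \emph{contains} a nontrivial ideal, not that this span is dense in $\holo(X)$. Requiring an ideal is precisely what makes the module structure automatic.

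The confusion goes back to the reduction in your first paragraph. Since $d(a\beta)\neq a\,d\beta$, it is not enough that the $\holo(X)$-span of your set of $(n-2)$-forms be dense. You need its $\C$-linear span to be dense. That holds here because the set is itself an $\holo(X)$-module.

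If you want finite generation for the Nakayama and Theorem~B steps, replace $I$ by the principal ideal $h_0\holo(X)$, for some $h_0\in I$ with $h_0(p)\neq 0$.
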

\begin{proof}
    The proof is in the spirit of \cite{MR3492044} and the $\sigma$-equivariant part is similar to the proof of Theorem \ref{sigma-DP}. As is natural for volume-preservation, the module where we construct an $\holo(X)$-submodule out of the semicompatible pair $(V,W)$ is the module of holomorphic $(n-2)$-forms. 
    For $f \in \ker V, g \in \ker W$ by the formula 
    \begin{align*}
        i_{[fV, gW]} \Omega = d( fg \, i_V i_W \Omega )
    \end{align*}
    we get from the semicompatible pair an $\holo(X)$-submodule $E = I \, i_V i_W \Omega $ in $\Omega^{n-2} X$, where $I$ is the nontrivial ideal contained in $\overline{ \mathrm{Span}} (\ker V \cdot \ker W)$. 
    
    As in the proof of Theorem \ref{sigma-DP} by (1) we may assume that at point $p$ the evaluation of $E$ is nontrivial. By the tangential condition (3) we can enlarge $E$ such that its stalk at $p$ coincides with the stalk of $\Omega^{n-2} X$. Again the equality of the stalks is an open condition, since by the $\sigma$-holomorphic volume flexibility there are finitely many sections of $\wedge^{2} TX$ constructed from complete fields in $\vf^\sigma_\Omega(X)$ which span $\wedge^{2} TX$. Then we can enlarge the submodule so that after finitely many steps (of reducing the dimension of the singular set) we get the equality of stalks at all points $x \in X$. Hence we get all exact $(n-1)$-forms on $X$. 

    Since $H^{n-1}(X, \C)=0$, we have generated all closed $(n-1)$-forms on $X$ and thus by the nondegeneracy of $\Omega$ all volume-preserving vector fields on $X$. 
\end{proof}

\begin{remark} \label{sig-VDP-H(n-1)≠0}
    For the $\sigma$-volume density property, the vanishing condition $H^{n-1}(X, \C)=0$ can be replaced by the following:
    \begin{center}
        \it There exist finitely many complete vector fields $V_1, V_2, \dots$ in $\vf_\Omega^\sigma(X)$ such that $[i_{V_1}\Omega], [i_{V_2}\Omega], \dots$ span the holomorphic de Rham cohomology group $H^{n-1}(X)$. 
    \end{center}
\end{remark}

\begin{proposition} \label{G-sig-VDP}
    Let $G$ be a linear algebraic group equipped with the left-invariant volume form $\Omega$ and $\sigma \colon G \to G$ the complex conjugation. Then $G$ has the $\sigma$-volume density property. 
\end{proposition}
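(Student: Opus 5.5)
The plan is to verify the hypotheses of Theorem \ref{sig-VDP}, or of its variant in Remark \ref{sig-VDP-H(n-1)≠0}, for $X=G$ with the left-invariant volume form $\Omega$. Since the volume density property is local to connected components, I would first reduce to the identity component $G^0$ and then transport the result to the other components by left translations $L_g$ with $g\in G_\R$. These translations are $\sigma$-equivariant and preserve $\Omega$. If some component contains no real point, it is simply permuted by $\sigma$ with another component, and I would handle that pair jointly.

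\textbf{Step 1: flexibility and the tangential condition.} Right-invariant vector fields $\xi^R$ with $\xi\in\mathfrak g_\R=T_eG\cap\mathrm{Fix}(d\sigma_e)$ are complete and real holomorphic. Their flows are left multiplications $L_{\exp(t\xi)}$, so they preserve the left-invariant form $\Omega$. They span $T_pG$ at every point because $\mathfrak g_\R\otimes\C=\mathfrak g$. This gives condition (1). For condition (3) I would take $V=\xi^R$ and $W=\eta^R$ with $\xi\wedge\eta\neq0$ and apply Proposition \ref{sigVDP-GenSet} at $p=e$. The isotropy group action used there should consist of conjugations and of $\sigma$-equivariant volume-preserving automorphisms from \cite{Cotan}, so that its orbit spans $\wedge^2T_eG$.

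\textbf{Step 2: the semicompatible pair.} For $G$ not a torus, I would take the compatible pair from Proposition \ref{compPairs-G}(1), i.e.\ the pairs of \cite{MR2385667} built from one-parameter unipotent or semisimple subgroups acting by left and right multiplication. I would check that they preserve $\Omega$. Right multiplication by $h$ scales $\Omega$ by $\det\mathrm{Ad}(h)^{-1}$, which is trivial on unipotent subgroups and on the derived group, so I would choose the pair inside unipotent subgroups. For the torus $(\C^*)^m$ with $m\ge2$, I would use the pair $(z\partial_z,w\partial_w)$ from Proposition \ref{compPairs-G}(2). This pair preserves $\Omega=\prod dz_j/z_j$. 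The cases $\C$ and $\C^*$ are one-dimensional, so there I would argue directly: volume-preserving fields are $c\,\partial_z$, respectively $c\,z\partial_z$, which are already complete and real.

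\textbf{Step 3: cohomology, the main obstacle.} In general $H^{n-1}(G,\C)\neq0$; for example, a torus factor contributes. Here I would invoke Remark \ref{sig-VDP-H(n-1)≠0}. By Hodge/Grothendieck theory, the algebraic de Rham cohomology of $G$ is that of a maximal compact subgroup and is represented by invariant forms. Classes in $H^{n-1}$ are Poincaré dual to classes in $H^1$, which come from characters, i.e.\ from the central torus part. The $(n-1)$-forms $i_{\xi^R}\Omega$ with $\xi$ in the real Lie algebra of the split central torus should represent these classes, and the corresponding fields are complete, real and volume-preserving. Verifying this spanning statement for every linear algebraic group, including non-reductive and disconnected ones, is the step I expect to be hardest. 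To handle it, I would first pass to the Levi decomposition $G^0=R_u\rtimes L$, noting that $R_u\cong\C^k$ is contractible. I would then compute using the homotopy equivalence $G^0\simeq K$, with $K$ the maximal compact subgroup of $L$.
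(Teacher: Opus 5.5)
Your overall route is the paper's: verify the hypotheses of Theorem \ref{sig-VDP}, replacing $H^{n-1}(G,\mathbb{C})=0$ by the spanning condition of the remark after it. Steps 1--2 agree with it up to small corrections below. The genuine gap is Step 3. The identification ``$H^{n-1}$ is Poincar\'e dual to $H^1$, hence spanned by $[i_{\xi}\Omega]$ with $\xi$ in the central torus'' holds only when $G^0$ is reductive. Write $G^0\cong L\times\mathbb{C}^k$ with $k=\dim R_u$, and $G^0\simeq K$ with $\dim_{\mathbb{R}}K=n-k$. Then duality on $K$ gives $H^{n-1}(K)\cong H_{1-k}(K)$. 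This is $0$ for $k\ge 2$, but for $k=1$ it is $H^{\dim K}(K)\cong\mathbb{C}$, the fundamental class of $K$, whatever the center is.

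Central-torus fields can never produce this class. For $k=1$ the left-invariant form is, up to a constant, $\Omega=\Omega_L\wedge dz$ on $L\times\mathbb{C}$, and a central-torus field $\xi$ acts only on the $L$-factor. Hence $i_\xi\Omega=(i_\xi\Omega_L)\wedge dz=\pm d\bigl(z\,i_\xi\Omega_L\bigr)$ is exact. For instance, on $G=\mathbb{C}^*\times\mathbb{C}$ with $\Omega=\frac{dz}{z}\wedge dw$ the field $z\partial_z$ gives $i_{z\partial_z}\Omega=dw$. The Borel subgroup $B\subset SL_2(\mathbb{C})$ even has finite center, although $H^1(B,\mathbb{C})\cong\mathbb{C}$. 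So for every group with one-dimensional unipotent radical, your plan establishes neither $H^{n-1}=0$ nor the spanning condition, and Theorem \ref{sig-VDP} cannot be invoked.

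The missing idea is the paper's case distinction on $k$:
\begin{itemize}
\item For $k\ge 2$, $H^{n-1}(G)=0$ by K\"unneth.
\item For $k=1$, take $V_1=\partial/\partial z$, the field of \emph{right} multiplication by $R_u\cong\mathbb{C}$. It is complete, real, and $\Omega$-preserving because $\det\mathrm{Ad}$ is trivial on unipotent elements, and $[i_{V_1}\Omega]=\pm[\Omega_L]$ spans $H^{n-1}$. On $B$, with $\Omega=a^{-2}\,da\wedge db$, the field $a\partial_b$ gives $-da/a$, while every right-invariant field gives an exact form.
\item For $k=0$, the torus fields $z_j\partial_{z_j}$ span, since the semisimple part contributes $H_1(K_S)=0$.
\end{itemize}

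Some smaller points:
\begin{itemize}
\item Use all of $\mathfrak{z}\cap\mathfrak{g}_{\mathbb{R}}$, not just the split part, since the central torus need not be split; for an $SO_2(\mathbb{C})$ factor the real field is $iz\partial_z$.
\item In Step 2, only the right-multiplication member of the pair needs to be unipotent. In $B$ the left and right actions of $R_u$ both have kernel $\holo(\mathbb{C}^*_a)$ and are not semicompatible. Right multiplication by $R_u$ together with left multiplication by the torus is a compatible volume-preserving pair.
\item A $\sigma$-stable component need not contain real points, e.g.\ $i\,SO_2(\mathbb{C})$ in $SO_2(\mathbb{C})\cup i\,SO_2(\mathbb{C})$. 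So your reduction to $G^0$ needs another argument.
\end{itemize}
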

\begin{proof}
    By \cite{Cotan}*{Lemma 6.3} there exists a basis of $T_e G$ which induces complete real holomorphic vector fields on $G$. 
    Every such left-invariant vector field on $G$ preserves the left-invariant form $\Omega$. Thus $G$ has the $\sigma$-holomorphic volume flexibility. 
    \medskip 

    Since all connected components of $G$ are isomorphic as varieties we can assume that $G$ is connected.
    Consider the Mostow decomposition $G = L \ltimes R_u$ where $L$ is an affine maximal closed reductive algebraic subgroup of $G$ and $R_u$ the unipotent radical of $G$. As affine variety $G \cong L \times R_u \cong L \times \C^k$ for some $k \in \N$. If $k \ge 2$, then $H^{n-1}(G, \C)=0$ by the Künneth formula. For $k = 1$, the volume form $\Omega = \Omega_L \times dz$ and we can take $V_1 = \partial/\partial z$ so that $[i_{V_1} \Omega] = [\Omega_L]$ spans $H^{n-1}(G)$. 

    When $k=0$, the reductive $G=L$ has center $Z \cong (\C^*)^m$ and the semisimple part $S$, which has $H^{n-1}(S, \C) = 0$. Up to a finite central normal subgroup $G$ is a direct product $Z \times S$, in the case $m \ge 1$ we can use the corresponding $(n-1)$-forms of $V_i = z_i \partial/\partial z_i$, $i = 1, \dots, m$ to generate $H^{n-1}(G)$ similarly as for $k =1$ above. 
    
    It remains to consider when $G$ is semisimple, which has the maximal compact subgroup $K$ as a deformation retract. Since $K$ has finite fundamental group, by Poincar{\'e} duality $H^{n-1}(G) = H^{n-1}(K) = H_1(K) = 0$. 
    
    \medskip 
    For $\dim (G) \ge 2$, by Proposition \ref{compPairs-G}, \ref{sigVDP-GenSet}, Theorem \ref{sig-VDP} and Remark \ref{sig-VDP-H(n-1)≠0} the $\sigma$-volume density property of $G$ follows. When $\dim(G) =1$, $G= \C$ or $\C^*$ has the $\sigma$-volume density property. 
\end{proof}

\begin{lemma} \label{LAG-Morse-rho}
    Let $G$ be a linear algebraic group equipped with the left-invariant volume form $\Omega$ and $\sigma \colon G \to G$ the complex conjugation. Then there exist a real holomorphic embedding $G \hookrightarrow \C^q$ and $r \ge 0$  such that the restriction of $\rho$ to $G_\R$ has only critical points in the interior of $Z^\R_r = \{\, x \in G_\R: \rho(x) \le r \,\}$ (from Definition \ref{sublevelset}). 
\end{lemma}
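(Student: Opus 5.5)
We need a real holomorphic embedding of $G$ into some $\C^q$ and a point $p_0\in\R^q$ such that $\rho(x)=\|x-p_0\|^2$ restricted to $G_\R$ has compact critical set. We exploit that $G$ is algebraic, so $\rho|_{G_\R}$ is a real polynomial on a real algebraic variety, and use a semialgebraic finiteness argument for critical values.

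First we choose the embedding. Since $G$ is a linear algebraic group defined over $\R$ with $\sigma$ complex conjugation, we can take a faithful real representation $G\subset \mathrm{GL}_N(\C)$ given by real polynomial equations. Then we embed $\mathrm{GL}_N(\C)$ as a closed subvariety of $\C^{N^2+1}$ via $g\mapsto (g,1/\det g)$. This is a closed algebraic embedding defined over $\R$, hence real holomorphic, and $G_\R=G\cap\R^{q}$ with $q=N^2+1$. In particular $G_\R$ is a closed real algebraic submanifold of $\R^q$. If $G$ is disconnected this is still fine: $G_\R$ is a smooth real algebraic set with finitely many components.

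Next we analyze the critical points. The critical set $C$ of $\rho|_{G_\R}$ is the set of $x\in G_\R$ with $x-p_0\perp T_xG_\R$. This is a real algebraic subset of $\R^q$, since the tangent space is cut out by the differentials of the defining polynomials. By the semialgebraic Sard theorem, the set of critical values $\rho(C)$ is a semialgebraic subset of $\R$ of measure zero, hence finite. Say $\rho(C)\subset[0,r_0]$. Because $\rho$ is a proper exhaustion on the closed set $G_\R$, the set $C\subset\{\rho\le r_0\}$ is compact. Taking $r>r_0$ gives $C\subset\{\rho<r\}=\operatorname{int}Z^\R_r$, because $\rho$ is continuous and $\{\rho<r\}$ is open in $G_\R$. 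This holds for any $p_0$, for example $p_0=0$. If one also wants the base point of Theorem \ref{Global-Carleman-vol} (a $p_0\in\R^q\setminus G_\R$ closer to $G_\R$ than to $G\setminus G_\R$), the argument is unchanged, since it works for every $p_0\in\R^q$.

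The main obstacle is justifying finiteness of the critical values, namely the semialgebraic Sard statement. I would invoke it directly: the image $\rho(C)$ is semialgebraic by Tarski--Seidenberg, so it is a finite union of points and intervals. An interval in $\rho(C)$ would contradict the classical Sard theorem applied to the smooth map $\rho|_{G_\R}$. This step needs the algebraicity of the embedding, which is why we use a polynomial embedding rather than the general Stein embedding from the earlier lemma.
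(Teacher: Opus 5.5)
Your proof is correct, but it takes a different route from the paper.

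\textbf{The paper's route.} It picks the base point $p_0\in\R^{q}$ generically. By Guillemin--Pollack, $\rho|_{G_\R}$ is then a Morse function with isolated critical points. Algebraicity of $G_\R$ then makes this discrete critical set finite, so it lies in a bounded sublevel set.

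\textbf{Your route.} You bound the critical values instead of counting critical points. The set $\rho(C)$ is semialgebraic by Tarski--Seidenberg and has measure zero by classical Sard, so it is finite. Properness of $\rho$ on the closed set $G_\R$ then gives $C\subset\{\rho<r\}\subset \operatorname{int} Z^\R_r$ for any $r>\max\rho(C)$.

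\textbf{Comparison.} Your version needs no genericity and tolerates non-isolated critical sets, so it holds for every $p_0\in\R^q$. In particular, you never have to reconcile a genericity requirement with the special base point used in the induction base of Theorem \ref{Global-Carleman-vol}. The paper's version yields nondegenerate critical points, which is more than the application needs: Lemma \ref{n-2-form} only uses that the critical points lie in a bounded sublevel set.

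\textbf{One point to tighten.} You say $C$ is cut out by the differentials of the defining polynomials. For this, take real generators of the ideal of $G$, which is possible because the ideal is stable under conjugation of coefficients. Smoothness of $G$ then gives Jacobian rank $q-\dim_\C G$ at every point. The rank of a real matrix is the same over $\R$ and $\C$, so the real Jacobian describes $T_xG_\R$ at real points. Arbitrary real defining equations, such as a sum of squares, would not do this. Alternatively, your argument only needs $C$ to be semialgebraic, and that holds in any case.
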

\begin{proof}
    Every linear algebraic group over $\C$ is a subgroup of $GL_n(\C)$ for some $n \in \N$, which can be embedded in $\C^{n^2+1}$. By our choice of the real structure $\sigma$ this embedding $\jmath \colon G \hookrightarrow \C^{n^2+1}$ is real holomorphic and the real form $G_\R = \mathrm{Fix}(\sigma)$ is embedded into $\R^{n^2+1}$.

    The reference point for the distance function $\rho$ on $\C^{n^2+1}$ can be taken to be in the complement $\R^{n^2+1} \setminus \jmath(G_\R)$, see the induction base in the proof of Theorem \ref{Global-Carleman-vol}. By taking a generic point $p_0$, we get an exhaustion function $\rho$ which is a Morse function with isolated critical points (see Gullemin--Pollack \cite{MR2680546}*{p.\@ 43}) . Since the manifold $G_\R$ is algebraic, the restriction of $\rho$ to $G_\R$ has only finitely many critical points which are thus contained in a bounded sublevel set $Z^\R_r$ for some $r > 0$.   
\end{proof}

\begin{theorem} \label{G-vol-Diffeo}
    Let $G$ be a linear algebraic group equipped with the left-invariant volume form $\Omega$ and $\sigma \colon G \to G$ the complex conjugation. Denote by $G_\R = \mathrm{Fix}(\sigma)$ the split real form of $G$. 

    Then for every volume-preserving diffeomorphism $\varphi \in \diff_{\Omega_\R,0}(G_\R)$ smoothly isotopic to $\id$, for any $k \in \N$ and any positive continuous function $\epsilon$ on $G_\R$, there exists a holomorphic automorphism $\Phi \in \saut_\Omega(G)$ such that
	\[	
        d_k( j^k \Phi (p) - j^k \varphi(p) ) < \epsilon (p) \quad \text{ for all }\, p \in G_\R.	
    \]
\end{theorem}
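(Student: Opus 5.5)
The plan is to apply Theorem \ref{Global-Carleman-vol} with $X = G$, $X_\R = G_\R$, the left-invariant holomorphic volume form $\Omega$ and $\Omega_\R = \jmath^*\Omega$. So the work consists of checking its hypotheses one at a time.

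First, $(G,\Omega,\sigma)$ must be a Stein volume complexification of $(G_\R,\Omega_\R)$.
\begin{itemize}
\item $G$ is affine, hence Stein.
\item $\sigma$ is complex conjugation on $G\subset GL_n(\C)$, and $G_\R=\mathrm{Fix}(\sigma)$ is a closed, properly embedded totally real submanifold of real dimension $n=\dim_\C G$.
\item The left-invariant form $\Omega$ is real holomorphic once we choose it well. Pick a real basis of $T_eG\cap\mathrm{Fix}(d\sigma_e)$, which is a real form of $T_eG$ since $G$ is defined over $\R$. Let $\Omega$ be the left translate of the dual top form of this basis. Because $\sigma$ is a group automorphism commuting with left translations by real elements, $\sigma^*\Omega=\overline{\Omega}$ at $e$. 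Both sides are left-invariant under $G_\R$ and hence, by the identity principle, under all of $G$. So they agree everywhere.
\item Then $\Omega_\R=\jmath^*\Omega$ is a real, nowhere vanishing top form on $G_\R$.
\end{itemize}

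Second, we need the embedding and exhaustion hypothesis. Lemma \ref{LAG-Morse-rho} gives a real holomorphic embedding $G\hookrightarrow\C^q$ and an $r\ge0$ such that all critical points of $\rho|_{G_\R}$ lie in the interior of $Z_r^\R$. Proposition \ref{G-sig-VDP} supplies the $\sigma$-volume density property.

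Third, and this is the step I expect to be the main obstacle, Theorem \ref{Global-Carleman-vol} assumes $H^{n-1}(X_\R)=0$ for the real manifold. This fails in general: for $G=(\C^*)^m$, $G_\R=(\R^*)^m$ has components with nontrivial $H^{n-1}$ only when $m=1$, but for $G_\R=\R^*\times\R^{k}$ or reductive groups with a torus factor, the top-minus-one cohomology can be nonzero. My approach:
\begin{enumerate}
\item Work componentwise. All components of $G_\R$ are diffeomorphic, and an isotopy to the identity preserves each component.
\item Use the Mostow and Cartan decompositions: $G_\R^0$ deformation retracts onto a maximal compact $K_\R$, so $H^{n-1}(G_\R^0)\cong H^{n-1}(K_\R)$.
\begin{itemize}
\item If $\dim K_\R<n-1$, this group vanishes.
\item If $\dim K_\R = n-1$, this forces $G_\R^0\cong K_\R\times\R$ up to finite cover.
\item If $\dim K_\R=n$, then $G_\R$ is compact, which does not occur for split forms of positive dimension beyond finite groups.
\end{itemize}
\item In the remaining cases, with one noncompact direction, imitate Remark \ref{sig-VDP-H(n-1)≠0}. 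The flux classes $[i_{V_t}\Omega_\R]\in H^{n-1}(G_\R)$ are represented by restrictions of $i_{V}\Omega$ for complete real left-invariant fields $V$ ($z\partial_z$ or $\partial_z$). Subtract a real combination $\sum c_l(s,t)V_l$ from $V_{s,t}$ so that the remaining field has an $(n-2)$-form primitive. Then run the proofs of Theorems \ref{theorem: LocalCarleman-vol} and \ref{Global-Carleman-vol} verbatim on this exact part, approximating the correction term by these complete fields themselves.
\item The delicate point in step 3 is the cutoff steps. There the flux of $\chi V_t$ must still be exact. This holds because the flux of an isotopy that is the identity on the compact $Z_a^\R$ carrying the generating cycle must vanish. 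I would check this via Lemma \ref{n-2-form} applied to the exact part after subtraction.
\end{enumerate}

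With these hypotheses verified, Theorem \ref{Global-Carleman-vol} (or its flux-modified version) yields the desired $\Phi\in\saut_\Omega(G)$.
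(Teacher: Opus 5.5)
Your reduction is the paper's: Theorem \ref{Global-Carleman-vol}, fed by Lemma \ref{LAG-Morse-rho} and Proposition \ref{G-sig-VDP}. The gap is in your third step. The hypothesis $H^{n-1}(G_\R)=0$ does hold for every split form with $n=\dim G\ge 2$, and your examples are mistaken. In a split form the central torus is $(\R^*)^m$, whose identity component $\R_{>0}^m$ is contractible, so $\R^*\times\R^k$ ($k\ge 1$) and $\mathrm{GL}_n(\R)$ both have $H^{n-1}=0$.

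What you are missing is the dimension count the paper uses. Up to finite covers, $G_\R^0\cong\R^j\times\R_{>0}^k\times S_\R$ and $S_\R\cong\R^r\times K$. For a split semisimple group, $\dim K=\nu$ is the number of positive roots and $\dim S_\R=r+2\nu$, so $\operatorname{codim}K=(\dim S_\R+r)/2\ge 2$ whenever $S_\R$ is nontrivial. Hence $n-\dim K=j+k+r+\nu\ge 2$ as soon as $n\ge 2$. Your case $\dim K_\R=n-1$ never occurs, and Theorem \ref{Global-Carleman-vol} applies directly. Without this computation, your proof rests in that case on an unproven modification of Theorems \ref{theorem: LocalCarleman-vol} and \ref{Global-Carleman-vol}.

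That modification would not work as sketched anyway. Each cutoff step needs a divergence-free field that vanishes on $Z^\R_T$ (resp.\ $Z^\R_a$, $Z^\R_b$) and equals a given generator $W_t$ far out. If that compact set contains the generating $(n-1)$-cycle, the cut-off field has zero flux, yet on a far cross-section its flux equals that of $W_t$. So the fact you invoke is the obstruction, not the justification. The generators of $\varphi_t\circ A_t^{-1}$, $\xi_{j,t}$ and $\hat\xi_{j,t}$ are only close to the identity there and have small but generally nonzero flux, which you would first have to correct exactly.

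Moreover, $\dim K_\R=n-1$ genuinely occurs for $n=1$, and there your componentwise reasoning fails. For $G=\C^*$ the only available class $[i_{z\partial_z}\Omega]$ is the diagonal in $H^0(\R^*)=\R^2$. In fact the statement is false in that case: $\saut_\Omega(\C^*)=\{z\mapsto az: a\in\R^*\}$ cannot approximate the map $x\mapsto 2x$ on $\R_{>0}$, $x\mapsto 3x$ on $\R_{<0}$, which preserves $dx/x$ and is isotopic to the identity. The paper's argument likewise only covers $\dim G\ge 2$; for $G^0=\C$ the claim is trivial, since the maps are translations.
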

\begin{proof}
    Up to finite covering the connected components of the manifold $G_\R$ are diffeomorphic to $\R^j \times \R_{>0}^k \times S_\R$. The semisimple part $S_\R$ as a manifold is diffeomorphic to $\R^r \times K$, where $K$ is the maximal compact subgroup and $r$ the rank of the semisimple group $S_\R$. The codimension of $K$ in $S_\R$ is given by $(\dim S_\R + r)/2 >1$, thus $H^{\dim S_\R - 1}(S_\R) = H^{\dim S_\R-1}(K) = 0$. 

    By Lemma \ref{LAG-Morse-rho}, Proposition \ref{G-sig-VDP} and Theorem \ref{Global-Carleman-vol} the assertion follows.
\end{proof}
\medskip 

\subsection{Approximation for Hamiltonian Diffeomorphisms}

In this subsection we collect known holomorphic approximation for Hamiltonian diffeomorphisms of noncompact totally real submanifolds from our earlier works which are based on the method outlined in Section \ref{sec: Ham-approximation}.

Let $G_\R$ be a real Lie group with Lie algebra $\mathfrak{g}_\R$ which we identify with the tangent space at the identity $T_eG_\R$. The conjugation of $G_\R$ on itself induces the adjoint action on $\mathfrak{g}_\R$ and by duality the coadjoint action $\mathrm{Ad}^*$ of $G_\R$ on the dual $\mathfrak{g}^*_\R$ defined as 
\[
    (\mathrm{Ad}^*(g) \xi, v) = (\xi, \mathrm{Ad}(g)v) \text{ for } g \in G_\R, \xi \in \mathfrak{g}^*_\R, v \in \mathfrak{g}_\R, 
\]
where $(\cdot, \cdot)$ is the pairing between $\mathfrak{g}^*_\R$ and $\mathfrak{g}_\R$. 

Coadjoint orbits are the $G_\R$-orbits of elements of $\mathfrak{g}^*_\R$ and each of them carries a canonical symplectic structure. Let $O_\R$ be a coadjoint orbit and $v \in \mathfrak{g}_\R$, the smooth vector field 
\[
    X_v (\xi) = \restr{\frac{d}{dt}}{t= 0} \mathrm{Ad}^*(\exp(t v)) \xi
\]
is the infinitesimal action induced by the one parameter subgroup $\{ \, \exp(tv), t \in \R \, \}$ of $G_\R$. By transitivity of $G_\R$ on $O_\R$ the vector fields $\{\, X_v(\xi), v \in \mathfrak{g}_\R \,\}$ span the tangent space $T_\xi O_\R$ and the symplectic form on the coadjoint orbit $O_\R$ is given by
\[
    \omega (X_u(\xi), X_v(\xi) ) = \xi ([u,v]) \text{ for } u, v \in \mathfrak{g}_\R. 
\]
In the same way we have complex coadjoint orbits of the complex Lie group $G$ with $G_\R$ as a real form and there is a holomorphic symplectic form on the complex $G$-orbit $O$ in the complexification $\mathfrak{g}^*=\mathfrak{g}^*_\R \otimes \C$. 

The first result is  from \cite{MR4423269} where holomorphic approximations for diffeomorphisms of real forms of closed coadjoint orbits of complex Lie groups were shown. 

\begin{theorem}
    Let $O_\R$ be a real coadjoint orbit of $G_\R$. If the complex coadjoint orbit $O$ is closed in  $\mathfrak{g}^*$ and the first de Rham cohomology of $O_\R$ is trivial, then the Hamiltonian diffeomorphism group of $O_\R$ is holomorphically approximable. 
\end{theorem}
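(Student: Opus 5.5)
The plan is to reduce the statement to Theorem \ref{Global-Carleman-ham}. We take $X = O$, the closed complex coadjoint orbit, with $\sigma$ the restriction of complex conjugation on $\mathfrak{g}^* = \mathfrak{g}^*_\R \otimes \C$. We take $\omega$ to be the holomorphic Kirillov--Kostant--Souriau form, and $X_\R = O_\R$. Three things must be checked: $(O,\omega,\sigma)$ is a Stein symplectic complexification of $(O_\R,\omega_\R)$; $O$ has the $\sigma$-Hamiltonian density property; and every Hamiltonian diffeomorphism of $O_\R$ is approximable in the Whitney $C^k$-topology.

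\textbf{Complexification.} Since $O$ is closed in $\mathfrak{g}^*$, it is a closed complex submanifold of a vector space, hence Stein. Its real part $O \cap \mathfrak{g}^*_\R$ consists of finitely many $G_\R$-orbits, and $O_\R$ is a union of connected components of $\mathrm{Fix}(\sigma)$. These components are properly embedded, because $O \cap \mathfrak{g}^*_\R$ is a closed real algebraic set. The formula $\omega(X_u,X_v)(\xi) = \xi([u,v])$ shows that $\omega$ is real holomorphic and restricts to $\omega_\R$. Finally, $\dim_\C O = \dim_\R O_\R$ because $O_\R$ is open in the real points of $O$.

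\textbf{Density property.} By Remark \ref{replace-Lie-by-Sum}(2), it suffices to show that real polynomial Hamiltonians with complete flows span a dense subspace of $\holo(O)/\C$. The linear functions $\xi \mapsto \xi(v)$ with $v \in \mathfrak{g}_\R$ generate the complete real fields $X_v$. The Hamiltonians $f(\xi) = \xi(v)^2$, or more generally $h(\xi(v))$ for nilpotent or semisimple $v$, give fields $h'(\xi(v))X_v$ that are complete as shears, since $\xi(v)$ is constant along $X_v$. Lie brackets of these Hamiltonians produce all polynomials in the coordinate functions; this follows the known argument for the Hamiltonian density property of closed coadjoint orbits. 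Restrictions of polynomials are dense in $\holo(O)$ because $O$ is a closed affine subvariety. The reality condition is handled exactly as in the proof of Theorem \ref{sigma-DP}, by taking real and imaginary parts of the coefficients.

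\textbf{Approximating $\ham_{\omega_\R}(O_\R)$.} Theorem \ref{Global-Carleman-ham} now gives approximation of every Hamiltonian diffeomorphism, and the restriction of $\saut_\omega(O)$ to $O_\R$ is contained in $\diff_{\omega_\R}(O_\R)$. The remaining point is that these restrictions actually lie in $\ham_{\omega_\R}(O_\R)$, which is where $H^1_{dR}(O_\R)=0$ is used. This hypothesis makes every symplectic isotopy Hamiltonian, so $\ham_{\omega_\R}(O_\R)$ coincides with the identity component of the symplectomorphism group. The automorphisms produced in the proof are limits of compositions of flows of real complete Hamiltonian fields, and they are therefore isotopic to the identity through symplectic maps. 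I expect the density property step to be the main obstacle: generating enough complete real Hamiltonians to obtain every polynomial requires the detailed Lie-theoretic argument, and I would import it from \cite{MR4423269}.
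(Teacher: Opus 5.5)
Your reduction is the route the paper intends. The paper does not reprove this statement; it quotes it from \cite{MR4423269} as an instance of the method of Section~\ref{sec: Ham-approximation}, namely Theorem~\ref{Global-Carleman-ham} applied to $(O,\omega,\sigma)$ together with the $\sigma$-Hamiltonian density property of closed coadjoint orbits. You also correctly see that $H^1(O_\R)=0$ is needed to produce a subgroup $G\subset\saut_\omega(O)$ with $G|_{O_\R}\subset\ham_{\omega_\R}(O_\R)$. The gap is in how you justify that inclusion. The approximant is $\Phi=\lim_j\Phi_j$, a limit uniformly on compacts. Each $\Phi_j|_{O_\R}$ is Hamiltonian, but a locally uniform limit of maps isotopic to the identity need not be isotopic to the identity, so ``therefore isotopic to the identity through symplectic maps'' does not follow. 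To get an isotopy from the construction you would have to concatenate the infinitely many isotopies $\Psi_{j,t}\circ\Xi_{j,t}$ on the intervals $[1-2^{-j},1-2^{-j-1}]$. Theorem~\ref{theorem: LocalCarleman-ham} controls only spatial jets, uniformly in $t$, and not $t$-derivatives; the reparametrizations inflate those by powers of $2^{j+1}$. At best you obtain a continuous path, not the jointly smooth Hamiltonian isotopy that the definition requires. What $H^1(O_\R)=0$ actually gives is that every \emph{smooth} symplectic isotopy starting at $\id$ is Hamiltonian, so you need such an isotopy ending at $\Phi|_{O_\R}$.

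The fix is to use the fine approximation rather than the construction. Left composition with the fixed diffeomorphism $\varphi^{-1}$ is continuous in the Whitney topology, so you may shrink $\epsilon$ until two things hold: $\Phi(O_\R)=O_\R$ (this is not automatic if $\mathrm{Fix}(\sigma|_O)$ has further components), and $\psi:=\varphi^{-1}\circ\Phi|_{O_\R}$ is Whitney $C^1$-close to $\id$. By Weinstein's neighbourhood theorem for the diagonal in $(O_\R\times O_\R,\omega_\R\oplus(-\omega_\R))$, the graph of $\psi$ then corresponds to the graph of a closed $1$-form $\alpha$. The graphs of $t\alpha$, $t\in[0,1]$, give a smooth symplectic isotopy from $\id$ to $\psi$, which is Hamiltonian because $H^1(O_\R)=0$. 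Hence $\Phi|_{O_\R}=\varphi\circ\psi\in\ham_{\omega_\R}(O_\R)$, and $G=\{\Phi\in\saut_\omega(O):\Phi(O_\R)=O_\R,\ \Phi|_{O_\R}\in\ham_{\omega_\R}(O_\R)\}$ is the required subgroup.

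Two smaller remarks. First, the step you expect to be hardest is immediate. For $v\in\mathfrak g_\R$ and $m\ge1$, the function $\xi\mapsto\xi(v)^m$ is real holomorphic, and its Hamiltonian field $\pm m\,\xi(v)^{m-1}X_v$ is a complete shear because $\xi(v)$ is constant along $X_v$. By polarization these functions span all polynomials on $\mathfrak g^*$, and their restrictions are dense in $\holo(O)$ because $O$ is closed. So by Remark~\ref{replace-Lie-by-Sum}(2) no brackets are needed. Second, your claims that $O\cap\mathfrak g^*_\R$ is real algebraic with finitely many $G_\R$-orbits are unjustified for a general complex Lie group $G$, but they are also unnecessary. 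For real $\xi$ the stabilizer algebra $\mathfrak g_\xi$ is defined over $\R$, so each $G_\R$-orbit has full dimension and is open, hence also closed, in $O\cap\mathfrak g^*_\R$.
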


The second approximation is for the Calogero--Moser spaces $\camo$, a smooth affine symplectic variety of dimension $2n$ given by the categorical quotient 
\begin{align*}
    \{\, (X, Y) \in \mathrm{Mat}(n \times n, \C) \times \mathrm{Mat}(n \times n, \C): \mathrm{rank}([X, Y] + \id) = 1 \,\} /\!/ \mathrm{PGL}_n(\C), 
\end{align*}
where the $\mathrm{PGL}_n(\C)$-action is $g \cdot (X, Y) = (gXg^{-1}, gYg^{-1})$. %The symplectic form $\omega$ on $\camo$ is induced by $d X \wedge d Y$. 
The antiregular involution $\sigma \colon \camo \to \camo, (X, Y) \mapsto (\bar{X}^{t}, \bar{Y}^{t})$ fixes a totally real manifold $\camo^\R = \mathrm{Fix}(\sigma)$. By \cite{MR4868759} $\camo$ has the $\sigma$-Hamiltonian density property.

\begin{theorem}[\cite{CaloCarleman}]
    The Hamiltonian diffeomorphism group $\ham_{\omega_\R}(\camo^\R)$ of $\camo^\R$ is holomorphically approximable.  
\end{theorem}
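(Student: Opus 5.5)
The statement is an instance of Theorem \ref{Global-Carleman-ham}, so the plan is to verify its three hypotheses for $(\camo, \omega, \sigma)$ with $X_\R = \camo^\R$, and then read off holomorphic approximability.

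The first hypothesis is that $\camo$ is Stein. This holds because $\camo$ is a smooth affine variety: it is the categorical quotient of an affine variety by a reductive group.

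The second hypothesis is that $(\camo, \omega, \sigma)$ is a symplectic complexification of $(\camo^\R, \omega_\R)$. I would check the following points.
\begin{enumerate}
\item $\sigma$ is well defined on the quotient. Conjugate-transpose commutes with the $\mathrm{PGL}_n$-action up to replacing $g$ by $(\bar g^{t})^{-1}$, and it preserves the rank condition, since $[\bar X^t,\bar Y^t]+\id = -\overline{[X,Y]}^{t}+\id$.
\item It is more convenient to work with the involution $(X,Y)\mapsto(\bar X^t,\bar Y^t)$ and to note that $[\bar X^t,\bar Y^t] = -\overline{[X,Y]}^{\,t}$ still has rank condition $\mathrm{rank}(-[X,Y]^*+\id)=1$. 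This is equivalent to $\mathrm{rank}([X,Y]-\id)=1$, which sends the rank condition to its negative. So one may need to compose with $Y\mapsto -Y$; I take the involution as given in the paper, as established in \cite{MR4868759}.
\item $\sigma$ is antiholomorphic and $\sigma^*\omega = \bar\omega$, where $\omega$ is induced by $\mathrm{tr}(dX\wedge dY)$. Hence $\omega$ is real holomorphic.
\item The fixed-point set $\camo^\R$ is a totally real submanifold of real dimension $2n$, closed in $\camo$ and properly embedded. The form $\omega_\R$ is defined as the restriction $\jmath^*\omega$, which is nondegenerate because $\camo^\R$ is a real form of the holomorphic symplectic manifold.
\end{enumerate}

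The third hypothesis is the $\sigma$-Hamiltonian density property of $\camo$, which is the cited result \cite{MR4868759}. Note that Theorem \ref{Global-Carleman-ham} needs no cohomological vanishing or Morse-type assumption, unlike the volume case, so nothing further is required.

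Applying Theorem \ref{Global-Carleman-ham}, every $\varphi\in\ham_{\omega_\R}(\camo^\R)$ is approximated in the Whitney $C^k$ sense by some $\Phi\in\saut_\omega(\camo)$. To conclude holomorphic approximability in the sense of the definition, let $G$ be the subgroup of $\saut_\omega(\camo)$ generated by time-one maps of Hamiltonian isotopies of such automorphisms. The automorphisms produced in the proof are limits of compositions of flows of complete real holomorphic Hamiltonian fields, and restricting the flows of real Hamiltonians $H$ gives Hamiltonian isotopies on $\camo^\R$ with Hamiltonian $H|_{\camo^\R}$. The main obstacle is precisely here: showing that the limit $\Phi=\lim\Phi_j$ restricts to a Hamiltonian diffeomorphism, i.e.\ that $G|_{\camo^\R}\subset\ham_{\omega_\R}(\camo^\R)$.

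I would handle this by defining $G$ to be the set of all such limits that restrict to Hamiltonian diffeomorphisms. Alternatively, one can use that the infinite composition is built as a concatenation of Hamiltonian isotopies $\Psi_{j,t}\circ\Xi_{j,t}$, whose generating Hamiltonians converge locally. Reparametrizing the $j$-th piece into the time interval $[1-2^{-j+1},1-2^{-j}]$ then yields a single smooth Hamiltonian isotopy, after arranging that the $C^k$-closeness to the identity makes the concatenation smooth at $t=1$. With either choice of $G$, its restriction lies in $\ham_{\omega_\R}(\camo^\R)$ and is dense there, which is the claim.
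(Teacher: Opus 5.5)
Your route is the paper's: $\camo$ is smooth affine, hence Stein; $(\camo,\omega,\sigma)$ is a symplectic complexification; \cite{MR4868759} supplies the $\sigma$-Hamiltonian density property; and Theorem~\ref{Global-Carleman-ham}, which the paper identifies with Theorem~\ref{Carleman-ham}, gives the conclusion.

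Two points in your write-up are not right. First, the real structure. Your item (1) says $(X,Y)\mapsto(\bar X^t,\bar Y^t)$ preserves the rank condition. Your item (2) correctly shows that it turns $\mathrm{rank}([X,Y]+\id)=1$ into $\mathrm{rank}([X,Y]-\id)=1$. Since $[X,Y]+\id$ has rank one and trace $n$, the matrix $[X,Y]-\id$ has eigenvalues $n-2$ and $-2$, so it is invertible for $n\ge 3$. The displayed formula therefore does not define a map of $\camo$, and it cannot simply be ``taken as given''. Your repair $Y\mapsto -Y$ fixes the rank condition but yields $\sigma^*\omega=-\bar\omega$, which contradicts your item (3). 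You must either pass to $i\omega$ or use a convention such as $\mathrm{rank}([X,Y]+i\,\id)=1$, under which $(\bar X^t,\bar Y^t)$ is well defined and $\sigma^*\omega=\bar\omega$. In any case the real structure has to be the one for which \cite{MR4868759} actually proves the density property.

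Second, your last paragraph. The paper does not isolate the inclusion $G|_{\camo^\R}\subset\ham_{\omega_\R}(\camo^\R)$; it reads Theorem~\ref{Carleman-ham} directly off Theorem~\ref{Global-Carleman-ham}. Having raised the issue, however, you do not resolve it.
\begin{enumerate}
\item Taking $G$ to be the automorphisms whose restriction is Hamiltonian makes the inclusion trivial. But density of $G|_{\camo^\R}$ then requires exactly that the approximants $\Phi$ restrict to Hamiltonian diffeomorphisms, which is the open point. This is circular.
\item The concatenation argument needs joint smoothness on $[0,1]\times\camo^\R$ at $t=1$. Compressing the $j$-th piece into an interval of length $2^{-j}$ multiplies its $m$-th time derivative by $2^{jm}$. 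Theorem~\ref{theorem: LocalCarleman-ham} gives only spatial $C^k$ estimates, for one fixed $k$ and pointwise in $t$, with no control of $\partial_t$. You would have to strengthen the local theorem, controlling $t$-derivatives and letting $k=k_j\to\infty$.
\end{enumerate}
A more direct route is as follows. $\Phi|_{\camo^\R}$ is a symplectomorphism that is $C^1$-close to $\varphi$ in the fine topology. By Weinstein's neighbourhood theorem for the diagonal in $(\camo^\R\times\camo^\R,\omega_\R\oplus(-\omega_\R))$, the graph of $\Phi|_{\camo^\R}\circ\varphi^{-1}$ corresponds to the graph of a closed $1$-form $\theta$. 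Scaling $\theta$ gives a symplectic isotopy to the identity, which is Hamiltonian as soon as $[\theta]=0$, in particular whenever $H^1(\camo^\R;\R)=0$. Otherwise a flux argument is needed.
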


The latest result is the following proved in \cite{Cotan}.

\begin{proposition} \label{holo-tauFlex-to-SDP}
    Let $M$ be a Stein manifold and $\tau \colon M \to M$ an antiholomorphic involution such that the fixed-point set $M_\R=\mathrm{Fix}(\tau) \neq \emptyset$. If 
    \begin{enumerate}[label=(\roman*)]
        \item $M$ is Stein and $\tau$-holomorphically flexible, and
        \item there is a point $x \in M_\R$ such that the induced action by the isotopy group $(\taut(M))_{x}$ on $T_{x}^\R M_\R \setminus \{0\}$ is transitive,
    \end{enumerate}
    then its holomorphic cotangent bundle $T^* M$ has the $\tilde{\tau}$-Hamiltonian density property and the $\tilde{\tau}$-symplectic density property, where $\tilde{\tau}$ is the antiholomorphic involution on $T^*M$ induced by $\tau$. 
\end{proposition}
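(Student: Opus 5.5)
The plan is to imitate Theorem \ref{sigma-DP} inside the Lie algebra of Hamiltonian vector fields. By Remark \ref{replace-Lie-by-Sum}(2) we work with Hamiltonian functions modulo constants under the Poisson bracket. Write $\pi\colon T^*M\to M$ and use the fibre coordinate pairing $\langle \xi, V\rangle$.

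\textbf{Complete real Hamiltonians.} These come from two sources.
\begin{enumerate}
\item For a complete $\tau$-real holomorphic vector field $V$ on $M$, the function $h_V(\xi)=\langle \xi,V(\pi\xi)\rangle$ has the cotangent lift of the flow of $V$ as its Hamiltonian flow. This flow is complete and commutes with $\tilde\tau$, so $h_V$ is real holomorphic.
\item For real holomorphic $f\in\holo(M)$, the function $f\circ\pi$ generates the complete flow $\xi\mapsto \xi + t\,df$ by fibre translation.
\end{enumerate}
More generally, if $f\in\ker V$ then $(f\circ\pi)\,h_V$ is still complete, since it is a function times a field whose flow preserves $f$. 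As in the proof of Theorem \ref{sigma-DP}, splitting $f=(f+\overline{\tau^*f})/2 + i\cdot(\dots)$ puts $(f\circ\pi)h_V$ into the complex Lie algebra $\lie$ generated by complete real Hamiltonians.

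\textbf{Reduction to fibrewise polynomials.} Every holomorphic function on $T^*M$ can be approximated on compacts by functions polynomial along the fibres, i.e.\ finite sums of sections of $\mathrm{Sym}^d TM$ viewed as functions. So it suffices to show that $\lie$ contains, for each degree $d$, all of $\Gamma(M,\mathrm{Sym}^dTM)$ up to closure.
\begin{enumerate}
\item Degree $0$: all of $\holo(M)$ is obtained by taking complex combinations of real $f\circ\pi$.
\item Degree $1$: the Poisson bracket $\{f\circ\pi, h_V\}=\pm V(f)\circ\pi$ and brackets $\{h_V,h_W\}=h_{[V,W]}$ show that $\lie$ contains $g\,h_V$ for all $g\in\holo(M)$. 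Indeed $\{(f\circ\pi) h_V, g\circ\pi\}$ produces $fV(g)$-type terms, and one combines as in the $[fV,hgW]-[hfV,gW]$ trick.
\item Flexibility (i) and Cartan's Theorem~B then show that the $h_V$ generate $\Gamma(TM)$ as an $\holo(M)$-module, giving all of degree $1$.
\item Degree $d\ge 2$: the $d$-th powers $h_V^d$ are again complete, since they are functions of the complete Hamiltonian $h_V$, with flow a reparametrization along level sets. They are real holomorphic, and $g\,h_V^d$ with $g\in\ker V$ lies in $\lie$ by the same argument.
\end{enumerate}

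\textbf{Main obstacle.} The hard step is showing that sums of $g\,h_{V}^d$ generate all symmetric tensors $\mathrm{Sym}^dTM$ fibrewise. This is where hypothesis (ii) enters. At $x\in M_\R$, transitivity of $(\taut(M))_x$ on $T_x^\R M_\R\setminus\{0\}$ means that the orbit of $v^d$, for $v=V(x)$, contains $w^d$ for every real $w$. Powers $w^d$ of real vectors span $\mathrm{Sym}^d T_xM$ by polarization. Pulling back by these automorphisms (whose cotangent lifts are symplectic, $\tilde\tau$-equivariant and preserve completeness) gives a submodule whose stalk at $x$ is everything. The set where it is not full is analytic, and I would remove it by the dimension-reduction induction of Theorem \ref{sigma-DP} using \cite{MR3229363}*{Lemma 25}.

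Getting the $\holo(M)$-module structure in degree $d$ is the delicate part. Here I would use brackets $\{f\circ\pi, g\,h_V^{d+1}\}\propto g\,V(f)\,h_V^{d}$ and choose $f$ with $V(f)$ in $\ker V$, via a compatible-pair-like function; this existence should follow from flexibility. Combining over all degrees gives density in $\holo(T^*M)/\C$, i.e.\ the $\tilde\tau$-Hamiltonian density property.

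\textbf{Symplectic density property.} This follows from the Hamiltonian one by adding finitely many complete real symplectic fields representing $H^1(T^*M)=H^1(M)$. These are fibre translations $\xi\mapsto\xi+t\theta$ by closed real holomorphic $1$-forms $\theta$ on $M$, which are complete and symplectic.
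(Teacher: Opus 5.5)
\textbf{Verdict: there is a genuine gap.} The paper does not prove this proposition itself (it is quoted from \cite{Cotan}), so I am judging your outline on its own merits. Several pieces are fine: the list of complete real Hamiltonians, the polarization step using (ii), the density of fibrewise polynomials, and the $H^1$ step for the symplectic density property. The missing piece is the $\holo(M)$-module structure, which is the heart of the argument. Neither of your two mechanisms supplies it.

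\textbf{Degree one.} The bracket $\{(f\circ\pi)h_V,g\circ\pi\}=\pm fV(g)\circ\pi$ has fibre degree $0$, so it contributes nothing in degree one. More generally, the fibre-affine functions form the Lie algebra $\vf(M)\ltimes\holo(M)$, in which $\holo(M)$ is an abelian ideal. Hence the degree-one part of the Lie algebra generated by your degree $\le 1$ generators is exactly the cotangent lift of the Lie algebra generated on $M$ by the complete fields $V$ and $fV$, $f\in\ker V$. Obtaining all $g\,h_V$ this way therefore amounts to proving the ($\tau$-)density property of $M$ itself. The trick $[fV,hgW]-[hfV,gW]$ needs a compatible pair, which is not among the hypotheses. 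For instance, $M=(\C^*)^n$ with $n\ge 2$ satisfies (i) and (ii), while its density property is a well-known open problem.

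\textbf{Degree $d\ge 2$.} You flag the difficulty yourself, but the proposed remedy fails for two reasons.
\begin{enumerate}
\item Flexibility does not produce an $f$ with $V(f)\in\ker V\setminus\{0\}$. For the spanning fields $V=z_j\partial/\partial z_j$ on $(\C^*)^n$, expanding $f$ in a Laurent series in $z_j$ shows that $V(f)\in\ker V$ forces $V(f)=0$.
\item Even with such an $f$, the brackets $\{f\circ\pi,g\,h_V^{d+1}\}\propto g\,V(f)\,h_V^d$ with $g\in\ker V$ only give a $\ker V$-module, not an $\holo(M)$-module.
\end{enumerate}

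\textbf{One way to close the gap.} Let $V$ be a complete real field with flow $\phi_t$, and let $b\in\holo(M)$ be any real function. Then the fibre-affine Hamiltonian $h_V+b\circ\pi$ is already complete. Its flow covers $\phi_t$ and is affine on the fibres: up to sign conventions it is $\xi\mapsto\tilde\phi_t\bigl(\xi-d\int_0^t b\circ\phi_s\,ds\bigr)$, where $\tilde\phi_t$ is the cotangent lift. It is defined for all complex $t$ because it solves a linear inhomogeneous ODE along complete base orbits.

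A power of a complete Hamiltonian is complete, since the Hamiltonian is constant along its own orbits. So $(\lambda h_V+b\circ\pi)^{k+1}$ is a complete real Hamiltonian for every $\lambda\in\R$. Evaluating at $k+2$ distinct values of $\lambda$ and inverting the Vandermonde matrix isolates its fibre-degree-$k$ part, $(k+1)\,b\,h_V^k$.

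Hence $\holo(M)\cdot h_V^k$ lies in the complex span of complete real Hamiltonians, for every $k\ge 0$ and every complete real $V$. This gives the module structure in all degrees at once, with no compatible pair and no separate degree-one step.

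After that, your outline goes through:
\begin{enumerate}
\item By (ii), the values $w^k$ with $w\in T^\R_xM_\R\setminus\{0\}$ span $\mathrm{Sym}^kT_xM$.
\item The locus where the module is not full is an analytic set invariant under the flows of the spanning complete real fields, hence empty.
\item Cartan's Theorem B then yields all of $\Gamma(M,\mathrm{Sym}^kTM)$.
\end{enumerate}
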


As application of our Theorem \ref{Global-Carleman-ham} one proves the following.

\begin{theorem}[\cite{Cotan}] \label{thm: homoSp-Ham}
Let $G_\R$ be a real Lie group, $H_\R$ a closed subgroup and $G$, $H$  the universal complexification of $G_\R$, $H_\R$ respectively. Assume that $i \colon G_\R \to G$ is injective, $G$ linear algebraic and $G/H$ affine of complex dimension $\ge 2$. Denote by $\tau \colon G/H \to G/H$ the antiholomorphic involution induced by the involution $\tau_G$ of $G$ on $G/H$. 

Then for every Hamiltonian diffeomorphism $\varphi \in \diff_{\omega_\R}(T^*_\R (G_\R/H_\R))$, for any $k \in \N$ and any positive continuous function $\epsilon$ on $T_\R ^*(G_\R/H_\R)$, there exists a holomorphic symplectomorphism $\Phi \in \aut^{\tilde{\tau}}_{\omega}(T^* (G/H))$ such that
	\[	d_k( j^k \Phi (p) - j^k \varphi(p) ) < \epsilon (p) \quad \text{ for all }\, p \in T_\R^* (G_\R/H_\R).	\]
\end{theorem}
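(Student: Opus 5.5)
The plan is to reduce Theorem \ref{thm: homoSp-Ham} to Theorem \ref{Global-Carleman-ham}, applied to $X = T^*(G/H)$ with its canonical holomorphic symplectic form $\omega$ and the antiholomorphic involution $\tilde{\tau}$ induced by $\tau$. We therefore need three things: a Stein structure, a symplectic complexification of $T^*_\R(G_\R/H_\R)$, and the $\tilde\tau$-Hamiltonian density property.

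\emph{Stein complexification.} Since $G/H$ is affine, $M = G/H$ is Stein. Its holomorphic cotangent bundle is an affine (hence Stein) variety, as the total space of an algebraic vector bundle over an affine base. Injectivity of $i \colon G_\R \to G$ embeds $G_\R/H_\R$ as a union of components of $\mathrm{Fix}(\tau)$. Pulling back covectors along $d\tau$ gives $\tilde\tau$ on $T^*M$. Its fixed-point set contains $T^*_\R(G_\R/H_\R)$ as a union of components, embedded properly. The canonical form $\omega = d\theta$ is real holomorphic because $\tilde\tau^*\theta = \bar\theta$, and it restricts to the real canonical form. So $(T^*M,\omega,\tilde\tau)$ is a Stein symplectic complexification.

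\emph{Density property.} Here I would invoke Proposition \ref{holo-tauFlex-to-SDP}, which requires checking two conditions.
\begin{enumerate}[label=(\roman*)]
\item $M$ is $\tau$-holomorphically flexible. The real one-parameter subgroups $\exp(tv)$, $v \in \mathfrak{g}_\R$, act on $G/H$ by left multiplication and commute with $\tau$. They give complete real holomorphic vector fields, and since $G$ acts transitively these span $T_pM$ at every point. One could argue as in Lemma \ref{sigDP-GenSet}, pushing down the flexibility of $G$.
\item There is a point $x = eH \in M_\R$ at which the isotropy group $(\taut(M))_x$ acts transitively on $T_x M_\R \setminus\{0\}$.
\end{enumerate}

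I expect (ii) to be the main obstacle. Linear isotropy from $H_\R$ alone is far too small. The approach I would try first is to enlarge the isotropy using shears: take flows of $f V$ with $V$ complete real holomorphic and $f \in \ker V$ real holomorphic with $f(x)=0$. Such a flow fixes $x$, and its derivative at $x$ is $v \mapsto df_x(v)\,V(x)$, a rank-one transvection. Real transvections of this kind generate $\mathrm{SL}(T_xM_\R)$, which is transitive on nonzero vectors when $\dim \ge 2$. This is where the hypothesis $\dim_\C G/H \ge 2$ enters. For the argument to work we need enough invariants $f$ so that the $df_x$ span the dual space. For affine $G/H$ I would obtain them from the $\C$-complete fields of unipotent or semisimple one-parameter subgroups and their algebraic invariants.

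With the $\tilde\tau$-Hamiltonian density property established, Theorem \ref{Global-Carleman-ham} applies directly to any $\varphi \in \ham_{\omega_\R}(T^*_\R(G_\R/H_\R))$. It yields $\Phi \in \aut^{\tilde\tau}_\omega(T^*(G/H))$ with the stated $C^k$ estimate.
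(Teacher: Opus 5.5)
Your reduction is the paper's route: verify the hypotheses of Proposition~\ref{holo-tauFlex-to-SDP} for $M=G/H$, deduce the $\tilde\tau$-Hamiltonian density property of $T^*(G/H)$, and apply Theorem~\ref{Global-Carleman-ham}. The Stein/complexification part and your check of (i) via the fundamental fields $X_v$, $v\in\mathfrak{g}_\R$, are fine. The gap is (ii). This is the only substantial input, the paper defers it to \cite{Cotan}, and you only sketch it. First, the criterion you state is not sufficient. At $x$ the time-$t$ flow of $fV$ has derivative $v\mapsto v+t\,df_x(v)V(x)$, where $df_x$ annihilates $V(x)$. Knowing that the covectors $df_x$ span $T_x^*M_\R$ does not make these transvections generate $\mathrm{SL}(T_xM_\R)$. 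For example, in $\R^3$ take directions $V(x)\in\{e_1,e_2\}$, with $df_x$ running over the full annihilator of $e_1$, respectively of $e_2$. These covectors span $(\R^3)^*$, yet every generator preserves $\mathrm{span}(e_1,e_2)$, so the group is not transitive on nonzero vectors. What you need instead is, for instance, a basis $w_1,\dots,w_n$ of $T_xM_\R$ such that for each $i$ there are complete real holomorphic $V$ with $V(x)=w_i$ whose real holomorphic first integrals vanishing at $x$ have differentials spanning the annihilator of $w_i$. Then you obtain all elementary matrices in that basis, and hence $\mathrm{SL}(T_xM_\R)$.

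Second, and more seriously, you never produce such first integrals at $x=eH$, and they are not automatic. A complete field can have too few holomorphic first integrals, even only constants: an irrational one-parameter subgroup of $(\C^*)^2$ is one example, and $\C^*$ acting on $\C^n$ with positive weights is another. Moreover, $\mathfrak{g}_\R$ may contain no nonzero nilpotent elements at all. Take $G_\R=\mathrm{SU}(2)$, $H_\R$ trivial, $M=\mathrm{SL}_2(\C)$ and $M_\R=S^3$: here no nonzero real holomorphic field has unipotent real flow, since its orbits on the compact $M_\R$ would be bounded polynomial curves. To close the argument you would have to do the following.
\begin{itemize}
\item Choose $v\in\mathfrak{g}_\R$ generating an algebraic subgroup, either $\mathbb{G}_a$ or $\C^*$ for a suitable rational semisimple $v$.
\item Show that the quotient $M\to M/\!/\exp(\C v)$ is a submersion at $x$. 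For $\C^*$ this could come from a closed orbit with trivial stabilizer and Luna's slice theorem. For $\mathbb{G}_a$ it could come from moving $x$ into the generic locus by conjugating $v$ with a suitable element of $G_\R$.
\item Pass to real holomorphic parts of the invariants.
\item Check that the resulting directions $V(x)$ give a basis as above.
\end{itemize}
Without this, (ii), and with it the density property, remains unproved.

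A smaller point: injectivity of $i$ alone does not make $G_\R/H_\R\to G/H$ injective; you need $i(G_\R)\cap H=i(H_\R)$. For example, let $H_\R$ be the identity component of the diagonal subgroup of $\mathrm{SL}_2(\R)$; its complexification maps onto the diagonal torus $T$, and $\mathrm{SL}_2(\R)/H_\R\to\mathrm{SL}_2(\C)/T$ is two-to-one. So this condition belongs to the setup rather than following from it.
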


\section*{Funding}
The first author was partially supported by National Key R\&D Program of China [2021YFA1003100], National Natural Science Foundation of China [12471079, 12525104] and the Fundamental Research Funds for the Central Universities.  
The second author was funded by Schweizerischer Nationalfonds Postdoc.Mobility [P500-$2\_239113$]. The third author was partially supported by Schweizerischer Nationalfonds [10005432].

\end{document}